\newcommand{\noop}[1]{}
\tikzset{
  symbol/.style={
    draw=none,
    every to/.append style={
      edge node={node [sloped, allow upside down, auto=false]{$#1$}}}
  }
}
\newcounter{dummy}
\numberwithin{dummy}{section}
\newtheorem{thm}[dummy]{Theorem}
\newtheorem{defn}[dummy]{Definition}
\newtheorem{lem}[dummy]{Lemma}
\newtheorem{prop}[dummy]{Proposition}
\newtheorem{cor}[dummy]{Corollary}
\theoremstyle{definition}
\newtheorem{rmk}[dummy]{Remark}
\numberwithin{equation}{section}
\DeclareMathOperator{\id}{id}
\DeclareMathOperator{\ad}{ad}
\DeclareMathOperator{\ws}{ws}
\DeclareMathOperator{\red}{red}
\DeclareMathOperator{\GL}{\mathrm{GL}}
\DeclareMathOperator{\Spec}{\mathrm{Spec}}
\DeclareMathOperator{\Hom}{\mathrm{Hom}}
\DeclareMathOperator{\ZP}{\mathrm{ZP}}
\DeclareMathOperator{\WML}{\mathrm{WML}}
\DeclareMathOperator{\SML}{\mathrm{SML}}
\renewcommand{\@biblabel}[1]{[#1]\hfill}
\begin{document}
\title{Hecke orbits and the Mordell-Lang conjecture in distinguished categories}

\author[F. Barroero]{Fabrizio Barroero}
\address[F. Barroero]{Universit\`a degli studi Roma Tre, Dipartimento di Matematica e Fisica, Largo San Murialdo 1, 00146 Roma, Italy}
\email{fbarroero@gmail.com}

\author[G. A. Dill]{Gabriel A. Dill}
\address[G. A. Dill]{Institut de Math\'ematiques, Universit\'e de Neuch\^atel, Rue Emile-Argand 11, 2000 Neuch\^atel, Switzerland}
\email{gabriel.dill@unine.ch}

\date{\today}

\begin{abstract}
Inspired by recent work of Aslanyan and Daw, we introduce the notion of $\Sigma$-orbits in the general framework of distinguished categories. In the setting of connected Shimura varieties, this concept contains many instances of (generalized) Hecke orbits from the literature. In the setting of semiabelian varieties, a $\Sigma$-orbit is a subgroup of finite rank. We show that our $\Sigma$-orbits have useful functorial properties and we use them to formulate two general statements of Mordell-Lang type (one of them implying the other one). We prove an analogue of a recent theorem of Aslanyan and Daw in this general setting, which we apply to deduce an unconditional result about unlikely intersections in a fibered power of the Legendre family. In an appendix, we prove an unconditional Zilber-Pink result for subvarieties of $\mathcal{A}_g$ that cannot be defined over the algebraic numbers.
\end{abstract}

\maketitle

\tableofcontents

\section{Introduction}\label{sec:intro}

In \cite{BD2} we have introduced distinguished categories with the aim of providing a general formal framework in which certain statements and facts about unlikely intersections can be formulated and proved at the same time in several different settings, e.g., for semiabelian varieties and connected mixed Shimura varieties. In that paper we focused on the Zilber-Pink conjecture in its different formulations and proved several facts about it. See the book \cite{Zannier} for a survey on unlikely intersection problems and the Zilber-Pink conjecture.

In this paper, which is inspired by \cite{AD22}, we formulate analogues of the Mordell-Lang conjecture, which we call the strong and the weak Mordell-Lang statement respectively, in a general distinguished category. For this, we introduce the notion of $\Sigma$-orbits. In the setting of semiabelian varieties, $\Sigma$-orbits are subgroups of finite rank and both the strong and the weak Mordell-Lang statement follow from the classical Mordell-Lang conjecture, which is a theorem due to McQuillan \cite{McQ}. In the setting of connected pure Shimura varieties, we will see that our notion of a $\Sigma$-orbit contains various notions of (generalized) Hecke orbits from the literature. In particular, we will see in Subsection \ref{sub:csv1} that, for connected pure Shimura varieties, our notion of a $\Sigma$-orbit is equivalent to the notion of hybrid Hecke orbit introduced in \cite{Richard_Yafaev_24a}. Our weak Mordell-Lang statement for connected pure Shimura varieties implies the Mordell-Lang conjecture for Shimura varieties formulated in \cite{AD22}; for connected pure Shimura varieties of abelian type, our weak Mordell-Lang statement has in the meantime been proved in \cite{Richard_Yafaev_24b}.

The main result of this article, which is Theorem \ref{thm:AD1}, is an analogue of Theorem 4.3 in \cite{AD22} (see also Remark \ref{rmk:sigmaatypical}). Combined with results from the literature, Theorem \ref{thm:AD1} implies Theorem 1.8 of \cite{Aslanyan} and unconditional results for subvarieties of connected pure Shimura varieties of abelian type (Corollary \ref{cor:shimura}) as well as for subvarieties of fibered powers of the Legendre elliptic scheme (Corollary \ref{cor:maincor}).

Thanks to a discussion at the third JNT biennial conference, we realized that the methods employed in \cite{BD2} and the present article also lead to a resolution of some cases of the Zilber-Pink conjecture for connected pure Shimura varieties with simple adjoint group. We include the proof of the relevant statement about distinguished categories in Appendix \ref{appendix}. The following theorem is a special case of Theorem \ref{thm:transcendentalzilberpinkshimura} (see Remark \ref{rmk:transcendentalzilberpinka_g}). We use $\mathcal{A}_g$ to denote the coarse moduli space of principally polarized abelian varieties of dimension $g$ over $\bar{\mathbb{Q}}$ ($g \in \mathbb{N}$).

\begin{thm}\label{thm:transcendentalzilberpinka_g}
Let $g \in \mathbb{N}$ and let $K$ be an algebraically closed field of characteristic $0$. If $V \subset (\mathcal{A}_g)_K$ is an irreducible closed subvariety that is not the base change of a closed subvariety of $\mathcal{A}_g$ (in particular, $K \neq \bar{\mathbb{Q}}$) and is not contained in a proper special subvariety of $(\mathcal{A}_g)_K$, we have that the union of the intersections of $V$ with all special subvarieties of $(\mathcal{A}_g)_K$ of codimension $> \dim V$ is not Zariski dense in $V$.
\end{thm}

If $g = 1$, Theorem \ref{thm:transcendentalzilberpinka_g} holds trivially while, if $g = 2$ and $\dim V = 1$, we have proved it in \cite{BD2} (if $g = 2$ and $\dim V = 2$, Theorem \ref{thm:transcendentalzilberpinka_g} again holds trivially since special points are algebraic).

We now give an overview of the content of this article.

After recalling some preliminaries in Section \ref{sec:preliminaries}, we introduce in Section \ref{sec:sigmaorbits} the notion of $\Sigma$-orbits and prove several basic facts about them.
In Section \ref{sec:exi}, we first describe $\Sigma$-orbits in the semiabelian case and see that they are finite rank subgroups. We then consider the setting of connected pure Shimura varieties and relate the concept of $\Sigma$-orbits with some definitions of (generalized) Hecke orbits appearing in the literature.

Section \ref{sec:sigmaspecial} is devoted to $\Sigma$-special subvarieties, which are weakly special subvarieties coming from a point of a $\Sigma$-orbit $\Sigma$: after their definition, we prove a very useful characterization (Lemma \ref{lem:sigmaspecialchar}) as well as several important facts. We moreover define $\Sigma_Z$-point-special subvarieties of a distinguished variety $Z$ (weakly special subvarieties containing a point of $\Sigma$) and show that they are $\Sigma$-special in most examples.
In Section \ref{sec:exii}, we consider the converse problem: we show that $\Sigma$-special subvarieties are $\Sigma_Z$-point-special for almost-split semiabelian varieties and connected pure Shimura varieties, but not in the general setting of semiabelian varieties. 

Section \ref{sec:ml} starts with the statements of the two above mentioned formulations of the Mordell-Lang statement, one of them implying the other one. We then show that they follow from the Zilber-Pink statement (as defined in Definition 10.1 of \cite{BD2}) and state and prove our main result, Theorem  \ref{thm:AD1}.

Finally, in Section \ref{sec:applications}, we show that the usual Mordell-Lang conjecture for semiabelian varieties (McQuillan's Theorem \cite{McQ}) is equivalent to our formulations, we recover Theorem 1.8 of \cite{Aslanyan}, and we deduce an unconditional result (Corollary \ref{cor:shimura}) for connected pure Shimura varieties of abelian type from our Theorem \ref{thm:AD1} together with a recent result of Richard and Yafaev \cite{Richard_Yafaev_24b}. We also deduce an unconditional result (Corollary \ref{cor:maincor}) for subvarieties of fibered powers of the Legendre elliptic scheme from our Theorem \ref{thm:AD1} and results of Gao as well as of the second-named author \cite{G17,Dill20,Dill21}.

For results and conjectures related to our weak Mordell-Lang statement for connected pure Shimura varieties, see \cite{OrrThesis,Orr15,RY17,Aslanyan,qiu22,RYa,Richard_Yafaev,AD22,Richard_Yafaev_24a,Richard_Yafaev_24b}.

In this article, all fields are of characteristic $0$. Varieties and curves are irreducible, subvarieties are closed. While (mixed) Shimura varieties as usually defined might not be irreducible, we consider here only connected (mixed) Shimura varieties, which are irreducible. We denote by $K$ an algebraically closed field of characteristic $0$. Varieties will always be varieties over $K$ if their ground field is not mentioned explicitly. 

\section{Preliminaries}\label{sec:preliminaries}

In this section, we recall some definitions from \cite{BD2} and prove some basic facts.

Consider a category $\mathfrak{C}$ with objects $\mathfrak{V}$ and morphisms $\mathfrak{M}$ together with a covariant functor $\mathcal{F}$ from $\mathfrak{C}$ to the category of varieties over $K$. Typically, this functor will correspond to forgetting some additional information.

We formulate the following axioms that this category may or may not satisfy:

\begin{enumerate}[label={(A\arabic*)}]
	\item \label{ax:1}\emph{Direct Products} - If $X,Y \in \mathfrak{V}$, then there exists $Z \in \mathfrak{V}$ and there exist morphisms $\pi_X:Z \to X$, $\pi_Y:Z \to Y$ in $\mathfrak{M}$ such that the morphism $\mathcal{F}(Z) \to \mathcal{F}(X) \times_K \mathcal{F}(Y)$ induced by the morphisms $\mathcal{F}(\pi_X)$ and $\mathcal{F}(\pi_Y)$ is an isomorphism. We identify $\mathcal{F}(Z)$ with $\mathcal{F}(X) \times_K \mathcal{F}(Y)$. Furthermore, if $\phi: W \to X$ and $\psi: W \to Y$ are morphisms in $\mathfrak{M}$, then there exists a unique morphism $\chi: W \to Z$ in $\mathfrak{M}$ such that $\pi_X\circ \chi=\phi$ and $\pi_Y\circ \chi=\psi$. We write $X \times Y$ or $X \times_K Y$ for $Z$ and $(\phi,\psi)$ for $\chi$. 
	\item\label{ax:2} \emph{Fibered Products} - If $\phi: X \to Z$ and $\psi: Y \to Z$ are morphisms in $\mathfrak{M}$, then there exists $n \in \mathbb{Z}$, $n \geq 0$, and there exist $X_1,\hdots,X_n \in \mathfrak{V}$ and morphisms $\phi_i: X_i \to X \times Y$ in $\mathfrak{M}$ ($i=1,\hdots,n$) such that $\bigcup_{i=1}^{n}{\mathcal{F}(\phi_i)(\mathcal{F}(X_i))} = (\mathcal{F}(X) \times_{\mathcal{F}(Z)} \mathcal{F}(Y))_{\red} \subset \mathcal{F}(X) \times_K \mathcal{F}(Y)$.
	\item\label{ax:3} \emph{Final Object} - The category has a final object that is mapped to $\Spec K$ by $\mathcal{F}$.
	\item \label{ax:4} \emph{Fiber Dimension} - If $\phi: X \to Y$ is a morphism in $\mathfrak{M}$, then $\mathcal{F}(\phi)(\mathcal{F}(X))$ is closed in $\mathcal{F}(Y)$ and there exist morphisms $\phi_1: W \to X$, $\phi_2: W \to Z$, and $\phi_3: Z \to Y$ in $\mathfrak{M}$ such that $\phi \circ \phi_1 = \phi_3 \circ \phi_2$, $\mathcal{F}(\phi_1)$ is finite and surjective, $\mathcal{F}(\phi_3)$ has finite fibers, $\mathcal{F}(\phi_2)$ is surjective, $\mathcal{F}(\phi_2)^{-1}(z)$ is irreducible for all $z \in \mathcal{F}(Z)$, and
	$\dim_w \mathcal{F}(\phi_2)^{-1}(\mathcal{F}(\phi_2)(w))$ is a constant function for $w \in \mathcal{F}(W)$.
	\end{enumerate}

\begin{defn}
	If $\mathfrak{C}$ satisfies \ref{ax:1} to \ref{ax:4}, we call it a \emph{distinguished category}. The elements of $\mathfrak{V}$ (which we will often identify with their images under $\mathcal{F}$) will then be called \emph{distinguished varieties} while the elements of $\mathfrak{M}$ (which we will often similarly identify) will be called \emph{distinguished morphisms}.
\end{defn}

Note that \ref{ax:4} implies that $\dim_x \phi^{-1}(\phi(x))$ is a constant function for $x\in X$ (see Section 2 of \cite{BD2}). If $L$ is an algebraically closed field that contains $K$ and $\mathfrak{C}$ is a distinguished category over $K$, then we obtain a distinguished category $\mathfrak{C}_L$ over $L$ by composing $\mathcal{F}$ with the base change functor from $K$ to $L$.

\begin{defn}
	Let $Z$ be a distinguished variety. A \emph{special} subvariety of $Z$ is the image of a distinguished morphism.
	A point $z \in Z(K)$ is called \emph{special} if the singleton $\{z\}$ is a special subvariety of $Z$.
\end{defn}

\begin{lem}\label{lem:finfibspecial}
	Any special subvariety is the image of a distinguished morphism with finite fibers. Any special point is the image of a distinguished morphism whose domain is a point.
\end{lem}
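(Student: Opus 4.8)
The plan is to extract both assertions directly from axiom \ref{ax:4}. For the first assertion, let $V$ be a special subvariety of a distinguished variety $Z$, say $V = \mathcal{F}(\phi)(\mathcal{F}(X))$ for some distinguished morphism $\phi\colon X \to Z$. I would apply \ref{ax:4} to $\phi$ to obtain distinguished morphisms $\phi_1\colon W \to X$, $\phi_2\colon W \to Z'$ and $\phi_3\colon Z' \to Z$ with $\phi\circ\phi_1 = \phi_3\circ\phi_2$, where $\mathcal{F}(\phi_1)$ is finite and surjective, $\mathcal{F}(\phi_2)$ is surjective, and $\mathcal{F}(\phi_3)$ has finite fibers. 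The claim is then that $\phi_3$ already does the job. Indeed, since $\mathcal{F}(\phi_1)$ is surjective we have $\mathcal{F}(\phi\circ\phi_1)(\mathcal{F}(W)) = \mathcal{F}(\phi)(\mathcal{F}(X)) = V$, and since $\mathcal{F}(\phi_2)$ is surjective we have $\mathcal{F}(\phi_3\circ\phi_2)(\mathcal{F}(W)) = \mathcal{F}(\phi_3)(\mathcal{F}(Z'))$; as $\phi\circ\phi_1 = \phi_3\circ\phi_2$, the two sides agree and therefore $\mathcal{F}(\phi_3)(\mathcal{F}(Z')) = V$. Hence $\phi_3$ exhibits $V$ as the image of a distinguished morphism with finite fibers.

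For the second assertion, let $z \in Z(K)$ be a special point, so that $\{z\}$ is a special subvariety of $Z$. By the first assertion, there is a distinguished morphism $\psi\colon X' \to Z$ such that $\mathcal{F}(\psi)$ has finite fibers and $\mathcal{F}(\psi)(\mathcal{F}(X')) = \{z\}$. Then $\mathcal{F}(X')$ is contained in the fiber $\mathcal{F}(\psi)^{-1}(z)$, which is finite; since $\mathcal{F}(X')$ is a variety, hence nonempty and irreducible, it must consist of a single point. Thus $\psi$ is a distinguished morphism whose domain is a point and whose image is $\{z\}$, as desired.

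I do not expect a serious obstacle here: both parts are formal consequences of \ref{ax:4} together with the elementary compatibility of images with composition and with surjective maps. The two points that deserve a little care are that the surjectivity of $\mathcal{F}(\phi_2)$ --- precisely one of the two places where the axioms invoked here are stronger than those of \cite{BD2} --- is exactly what upgrades ``the image of $\phi_3$ is contained in $V$'' to ``the image of $\phi_3$ equals $V$'', and that in the second part one uses the standing convention that varieties are irreducible (and nonempty) in order to pass from ``$\mathcal{F}(X')$ is finite'' to ``$\mathcal{F}(X')$ is a point''.
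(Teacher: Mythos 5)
Your proof is correct and follows essentially the same route as the paper: the first part is exactly the paper's argument that the surjectivity of $\mathcal{F}(\phi_1)$ and $\mathcal{F}(\phi_2)$ in \ref{ax:4} forces $\mathcal{F}(\phi)(\mathcal{F}(X)) = \mathcal{F}(\phi_3)(\mathcal{F}(Z'))$, and the second part is the paper's observation that a variety with finite underlying topological space must be a point. No issues.
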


\begin{proof}
	The first part of the lemma is a consequence of \ref{ax:4}. Indeed, in \ref{ax:4}, since $\phi_1$ and $\phi_2$ are surjective, we have $ \phi(X) =  \phi_3 (Z)$.
	
	The second part of the lemma follows from the fact that a variety whose underlying topological space is finite must be a point.
\end{proof}

\begin{lem}\label{lem:finfiba2}
	The distinguished morphisms $\phi_1,\dots ,\phi_n$ in \ref{ax:2} may be assumed to have finite fibers.
\end{lem}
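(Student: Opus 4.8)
The plan is to deduce this directly from Lemma \ref{lem:finfibspecial}. Start from the data furnished by \ref{ax:2}: given morphisms $\phi \colon X \to Z$ and $\psi \colon Y \to Z$ in $\mathfrak{M}$, choose $X_1,\dots,X_n \in \mathfrak{V}$ and morphisms $\phi_i \colon X_i \to X \times Y$ in $\mathfrak{M}$ with $\bigcup_{i=1}^{n} \mathcal{F}(\phi_i)(\mathcal{F}(X_i)) = (\mathcal{F}(X) \times_{\mathcal{F}(Z)} \mathcal{F}(Y))_{\red}$; note that $X \times Y$ exists as a distinguished variety by \ref{ax:1}. For each $i$, the set $V_i := \mathcal{F}(\phi_i)(\mathcal{F}(X_i))$ is, by the very definition of a special subvariety, a special subvariety of the distinguished variety $X \times Y$, being the image of a distinguished morphism (and irreducible, since $\mathcal{F}$ takes values in the category of varieties over $K$).

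Now apply Lemma \ref{lem:finfibspecial} to each $V_i$: it is also the image of a distinguished morphism $\phi_i' \colon X_i' \to X \times Y$ with finite fibers. Replacing each $\phi_i$ by $\phi_i'$ and each $X_i$ by $X_i'$ does not change the union of the images, which remains equal to $(\mathcal{F}(X) \times_{\mathcal{F}(Z)} \mathcal{F}(Y))_{\red}$, so this new collection still witnesses \ref{ax:2} and now consists entirely of morphisms with finite fibers. I do not expect any real obstacle: the only thing that must be observed is that the images occurring in \ref{ax:2} are special subvarieties of $X \times Y$, after which Lemma \ref{lem:finfibspecial} applies verbatim.
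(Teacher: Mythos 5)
Your argument is correct and is essentially the paper's own proof: observe that each image $\phi_i(X_i)$ is a special subvariety of $X \times Y$, then invoke Lemma \ref{lem:finfibspecial} to replace each $\phi_i$ by a distinguished morphism with finite fibers and the same image. No issues.
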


\begin{proof}
	The $\phi_i(X_i) $ in \ref{ax:2} are all special subvarieties of $X \times Y$, so the lemma follows from Lemma \ref{lem:finfibspecial}.
\end{proof}

\begin{defn}
	Let $Z$ be a distinguished variety. A subvariety $W$ of $Z$ is called \emph{weakly special} if there exist distinguished morphisms $\psi: Y \to X$, $\phi: Y \to Z$ and a point $x \in X(K)$ such that $W$ is an irreducible component of $\phi(\psi^{-1}(x))$.
\end{defn}

We showed in \cite{BD2} that irreducible components of intersections of (weakly) special subvarieties are (weakly) special. We can therefore make the following definition.

\begin{defn}
	For a subvariety $V$ of a distinguished variety $X$, let $\langle V \rangle$ denote the smallest special subvariety of $X$ that contains $V$ and let $\langle V \rangle_{\ws}$ denote the smallest weakly special subvariety of $X$ that contains $V$. We call $\langle V \rangle$ the \emph{special closure} of $V$ and $\langle V \rangle_{\ws}$ the \emph{weakly special closure} of $V$. Furthermore, we call $\delta(V) = \dim \langle V \rangle - \dim V$ the \emph{defect} of $V$ and $\delta_{\ws}(V) = \dim \langle V \rangle_{\ws} - \dim V$ the \emph{weak defect} of $V$.
\end{defn}

\begin{defn}
Let $V$ be a subvariety of a distinguished variety $X$. A subvariety $W \subset V$ is called \emph{optimal for $V$ in $X$} if $\delta(U) > \delta(W)$ for every subvariety $U$ such that $W \subsetneq U \subset V$. It is called \emph{weakly optimal for $V$ in $X$} if it satisfies the same property with $\delta_{\ws}$ in place of $\delta$.
\end{defn}

We can now formulate a fifth axiom that a distinguished category may or may not satisfy.

\begin{enumerate}[label={(A\arabic*)}]
	\setcounter{enumi}{4}
	\item \label{ax:5} \emph{Weak Finiteness} - If $X$ is a distinguished variety and $V \subset X$ is a subvariety, then there exists a finite set of pairs $(\phi,\psi)$ of distinguished morphisms $\phi: Y_\phi \to X$ and $\psi: Y_\phi \to Z_\psi$ such that for every subvariety $W \subset V$ that is weakly optimal for $V$ in $X$ there exists a pair $(\phi,\psi)$ in this set and $z \in Z_\psi(K)$ such that $\phi$ has finite fibers and $\langle W \rangle_{\ws}$ is an irreducible component of $\phi(\psi^{-1}(z))$.
\end{enumerate}

In \cite{BD2}, several examples of distinguished categories are given. Most of them also satisfy \ref{ax:5}. In particular, semiabelian varieties, connected pure Shimura varieties, and connected mixed Shimura varieties of Kuga type all form distinguished categories that satisfy \ref{ax:5} and to which we will apply the results of this article. In Sections \ref{sec:sigmaorbits}, \ref{sec:sigmaspecial}, and \ref{sec:ml}, we always work in a fixed ambient distinguished category unless stated otherwise.

\section{$\Sigma$-orbits}\label{sec:sigmaorbits}

\begin{defn}
	For $X, Z\in \mathfrak{V}$ and $x\in X(K)$ we define the \emph{$\Sigma$-orbit $\Sigma_Z(x)$ of $x$ in $Z$} to be the set of all $\phi(y)$ where $Y\in \mathfrak{V}$, $\phi: Y \to Z$ is a distinguished morphism, and $y\in Y(K)$ such that there exists a distinguished morphism $\psi: Y \to X$ with finite fibers with $\psi(y) = x$.
	
	We call the collection of all $\Sigma_Z(x)$, for varying $Z\in \mathfrak{V}$, the \emph{$\Sigma$-orbit $\Sigma(x)$ of $x$}.
\end{defn}

Obviously, if $\phi: Z \to Z'$ is a distinguished morphism and $z \in \Sigma_Z(x)$, then $z'=\phi(z) \in \Sigma_{Z'}(x)$. In the following, we see that our $\Sigma$-orbits enjoy many other nice properties and in the next section we are going to see some examples.

\begin{prop}\label{prop:heckeunderfinitecover} Let $X\in \mathfrak{V}$ and $x\in X(K)$. 
	 For a distinguished morphism $\chi:Z'\to Z$ with finite fibers we have
		 $\chi^{-1}(\Sigma_Z(x))=\Sigma_{Z'}(x)$.

\end{prop}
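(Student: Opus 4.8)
The plan is to prove the two inclusions $\chi^{-1}(\Sigma_Z(x)) \supseteq \Sigma_{Z'}(x)$ and $\chi^{-1}(\Sigma_Z(x)) \subseteq \Sigma_{Z'}(x)$ separately, using in an essential way that $\chi$ has finite fibers and the characterization of $\Sigma$-orbits via distinguished morphisms with finite fibers from a fixed auxiliary variety $Y$.

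For the inclusion $\Sigma_{Z'}(x) \subseteq \chi^{-1}(\Sigma_Z(x))$, let $z' \in \Sigma_{Z'}(x)$, so there are distinguished morphisms $\phi: Y \to Z'$ and $\psi: Y \to X$ with $\psi$ having finite fibers, and $y \in Y(K)$ with $\phi(y) = z'$, $\psi(y) = x$. Then $\chi \circ \phi: Y \to Z$ is a distinguished morphism, $(\chi\circ\phi)(y) = \chi(z')$, and the same $\psi$ witnesses that $\chi(z') \in \Sigma_Z(x)$; hence $z' \in \chi^{-1}(\Sigma_Z(x))$. This direction is immediate and does not even use that $\chi$ has finite fibers.

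For the reverse inclusion $\chi^{-1}(\Sigma_Z(x)) \subseteq \Sigma_{Z'}(x)$, take $z' \in Z'(K)$ with $z = \chi(z') \in \Sigma_Z(x)$. Then there are distinguished morphisms $\phi: Y \to Z$, $\psi: Y \to X$ with $\psi$ having finite fibers, and $y \in Y(K)$ with $\phi(y) = z$, $\psi(y) = x$. The idea is to replace $Y$ by (an irreducible component of) the fibered product of $Y \to Z$ and $Z' \to Z$, so as to lift $\phi$ to a morphism into $Z'$ while keeping the finite-fiber morphism to $X$. Concretely, using Axiom \ref{ax:2} (and Lemma \ref{lem:finfiba2}) applied to $\phi: Y \to Z$ and $\chi: Z' \to Z$, we obtain finitely many distinguished morphisms $\phi_i: X_i \to Y \times Z'$ with finite fibers whose images cover $(\mathcal{F}(Y) \times_{\mathcal{F}(Z)} \mathcal{F}(Z'))_{\red}$. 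Since $(y, z')$ lies in this fibered product (because $\phi(y) = z = \chi(z')$), there is an index $i$ and a point $w \in X_i(K)$ with $\phi_i(w) = (y, z')$. Now compose: the projection $Y \times Z' \to Z'$ followed precomposed with $\phi_i$ gives a distinguished morphism $\alpha: X_i \to Z'$ with $\alpha(w) = z'$, while the projection $Y \times Z' \to Y$ precomposed with $\phi_i$ and then followed by $\psi$ gives a distinguished morphism $\beta: X_i \to X$ with $\beta(w) = x$. The morphism $\beta$ is a composition $\psi \circ (\pi_Y \circ \phi_i)$; here $\pi_Y \circ \phi_i$ has finite fibers because $\phi_i$ does and $\chi$ does (so the projection $\pi_Y$ restricted to the fibered product, hence to the image of $\phi_i$, has finite fibers), and $\psi$ has finite fibers, so $\beta$ has finite fibers. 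Thus $\alpha$ and $\beta$ together with $w$ witness $z' \in \Sigma_{Z'}(x)$.

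The main obstacle I anticipate is the bookkeeping around fiber-finiteness of $\pi_Y \circ \phi_i$: one must argue carefully that the first projection from the reduced fibered product $\mathcal{F}(Y) \times_{\mathcal{F}(Z)} \mathcal{F}(Z')$ to $\mathcal{F}(Y)$ has finite fibers, which is exactly where the hypothesis that $\chi$ has finite fibers is used (a point of the fiber over $y \in \mathcal{F}(Y)$ is a point $z'$ of $\mathcal{F}(Z')$ with $\chi(z') = \phi(y)$, and there are only finitely many such $z'$), and then transport this through the finite-fiber morphism $\phi_i$ and compose with $\psi$. Everything else is a routine diagram chase using the universal property in \ref{ax:1} and the functoriality already recorded after the definition of $\Sigma$-orbits.
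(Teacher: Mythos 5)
Your proposal is correct and follows essentially the same route as the paper: both directions are handled identically, and the key step is the same application of \ref{ax:2} together with Lemma \ref{lem:finfiba2} to a fibered product over $Z$, followed by the finite-fibers bookkeeping you describe. The only cosmetic difference is that the paper forms the fibered product $(X\times Z')\times_{X\times Z}Y$ and projects directly to $X$, whereas you use the simpler $Y\times_{Z}Z'$ and route the finite-fibers morphism through $Y$ via $\psi$; both work for the same reason.
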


\begin{proof}
	The inclusion $\supset$ is obvious. Let $z'\in \chi^{-1}(\Sigma_Z(x))$ and set $z=
	\chi(z')$. By definition there are distinguished morphisms $\phi: Y \to Z$ and $\psi: Y \to X$, where the latter has finite fibers, and there is $y\in Y(K)$ such that $\psi(y)=x$ and $\phi(y)=z$. We consider the fibered product $(X\times Z')\times_{X\times Z}Y$ given by the distinguished morphisms $\id_X\times \chi:X\times Z'\to X\times Z$ and $(\psi,\phi):Y\to X\times Z$.
	
	We apply \ref{ax:2} and Lemma \ref{lem:finfiba2} to find distinguished morphisms $\phi_i: X_i \to X\times Z'\times Y$, $i=1, \dots ,n$, with finite fibers such that $\bigcup_{i=1}^{n}{\phi_i(X_i)} = ((X\times Z')\times_{X\times Z}Y)_{\red}$. We may assume that $(x,z',y)\in \phi_1(X_1(K))$ and pick $x_1\in X_1(K)$ such that $\phi_1(x_1)=(x,z',y)$.  After composing with the appropriate projection, we get distinguished morphisms $\phi': X_1 \to Z'$ and $\psi': X_1 \to X$, where $\psi'$ has finite fibers, such that $\phi'(x_1)=z'$ and $\psi'(x_1)=x$. This implies that $z'\in \Sigma_{Z'}(x)$.
\end{proof}

\begin{prop}\label{prop:heckeunderfinitecover2} Let $\chi: X' \to X$ be a distinguished morphism with finite fibers, let $x' \in X'(K)$, and set $x = \chi(x') \in X(K)$. 
	 For every distinguished variety $Z$ we have
		 $\Sigma_Z(x) =\Sigma_Z(x')$. Hence $\Sigma(x)=\Sigma(x')$.

\end{prop}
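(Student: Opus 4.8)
The plan is to prove $\Sigma_Z(x) = \Sigma_Z(x')$ by showing each inclusion, using the transitivity of "distinguished morphism with finite fibers" under composition together with the fibered product trick already used in Proposition \ref{prop:heckeunderfinitecover}.

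For the inclusion $\Sigma_Z(x') \subset \Sigma_Z(x)$, I would argue directly: if $z \in \Sigma_Z(x')$, then there are distinguished morphisms $\phi: Y \to Z$ and $\psi: Y \to X'$ with $\psi$ of finite fibers and a point $y \in Y(K)$ with $\psi(y) = x'$ and $\phi(y) = z$. Then $\chi \circ \psi: Y \to X$ is a distinguished morphism (composition of distinguished morphisms is a distinguished morphism, as the image of a distinguished morphism is special and hence again such an image), it has finite fibers since both $\chi$ and $\psi$ do, and $(\chi \circ \psi)(y) = \chi(x') = x$. Hence the same pair $(\phi, \chi\circ\psi)$ witnesses $z \in \Sigma_Z(x)$. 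This direction is essentially immediate.

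The reverse inclusion $\Sigma_Z(x) \subset \Sigma_Z(x')$ is where the real content lies, and I would mimic the fibered product construction from the proof of Proposition \ref{prop:heckeunderfinitecover}. Suppose $z \in \Sigma_Z(x)$, witnessed by distinguished morphisms $\phi: Y \to Z$, $\psi: Y \to X$ with $\psi$ of finite fibers, and $y \in Y(K)$ with $\psi(y) = x$, $\phi(y) = z$. I would form the fibered product of $\chi: X' \to X$ and $\psi: Y \to X$ — equivalently, the fibered product $(X' \times Z) \times_{X \times Z} Y$ with respect to $\chi \times \id_Z: X' \times Z \to X \times Z$ and $(\psi, \phi): Y \to X \times Z$. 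Applying \ref{ax:2} and Lemma \ref{lem:finfiba2}, I obtain distinguished morphisms $\phi_i: X_i \to X' \times Z \times Y$ with finite fibers whose images cover the reduced fibered product. Since $\psi(y) = x = \chi(x')$, the point $(x', z, y)$ lies in this fibered product, so it lies in some $\phi_1(X_1(K))$; pick $x_1 \in X_1(K)$ with $\phi_1(x_1) = (x', z, y)$. Composing $\phi_1$ with the projections to $X'$ and to $Z$ gives distinguished morphisms $\psi': X_1 \to X'$ and $\phi': X_1 \to Z$, with $\psi'$ of finite fibers (a composition of finite-fiber maps — the projection $X' \times Z \times Y \to X'$ restricted to the fibered product has finite fibers because its fibers inject into the finite fibers of $\psi$, being controlled by the $Y$-coordinate and $\psi$'s finiteness... actually more carefully, the fibers of $X_1 \to X'$ are finite because $\phi_1$ has finite fibers and the map from $\phi_1(X_1)$ to $X'$ has finite fibers since over a point of $X'$ the $Z$-coordinate is free but the $Y$-coordinate satisfies $\psi(y) = \chi(x')$ fixed, so... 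I would need to be slightly careful here and may instead just invoke that $\psi' = \chi^{-1}$-pullback structure forces finiteness). In any case $\psi'(x_1) = x'$ and $\phi'(x_1) = z$, so $z \in \Sigma_{Z}(x')$.

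The main obstacle I anticipate is verifying that the morphism $\psi': X_1 \to X'$ genuinely has finite fibers; this is the analogue of the corresponding point in Proposition \ref{prop:heckeunderfinitecover} but here one is projecting onto the $X'$-factor rather than a cover of $Z$, so the bookkeeping is slightly different. The cleanest route is to observe that $\phi_1$ factors through $((X' \times Z) \times_{X \times Z} Y)_{\red}$, and the projection of this fibered product to $X'$ composed with $\chi$ equals its projection to $X \subset X \times Z$, which factors through $\psi$; since $\chi$ has finite fibers and $\psi$ has finite fibers, the projection to $X'$ has fibers that embed into fibers of $\psi$ over the fibers of $\chi$ — hence finite — and then composing with the finite-fiber map $\phi_1$ keeps fibers finite. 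Once finiteness of $\psi'$ is in hand, the statement $\Sigma_Z(x) = \Sigma_Z(x')$ follows, and "Hence $\Sigma(x) = \Sigma(x')$" is immediate since this holds for every distinguished variety $Z$.
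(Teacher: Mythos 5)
Your proof is correct and follows essentially the same route as the paper: the easy inclusion by composing $\psi$ with $\chi$, and the hard one via \ref{ax:2} and Lemma \ref{lem:finfiba2} applied to a fibered product over $X$. The only difference is that the paper uses the simpler fibered product $X' \times_X Y$ and recovers the map to $Z$ as $\phi \circ \phi'$ (so both projections visibly have finite fibers), whereas carrying the extra $Z$-factor as you do forces the slightly more delicate finite-fibers verification for the projection to $X'$, which you do resolve correctly in your final paragraph.
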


\begin{proof}
	The inclusion $\supset$ is obvious, so suppose that $z \in \Sigma_Z(x)$. By definition there are distinguished morphisms $\phi: Y \to Z$ and $\psi: Y \to X$, where the latter has finite fibers, and there is $y\in Y(K)$ such that $\psi(y)=x$ and $\phi(y)=z$. We consider the fibered product $X' \times_{X} Y$.
	
	We apply \ref{ax:2} and Lemma \ref{lem:finfiba2} to find distinguished morphisms $\phi_i: X_i \to X' \times Y$, $i=1, \dots ,n$, with finite fibers such that $\bigcup_{i=1}^{n}{\phi_i(X_i)} = (X' \times_{X} Y)_{\red}$. We may assume that $(x',y)\in \phi_1(X_1(K))$ and pick $x_1\in X_1(K)$ such that $\phi_1(x_1)=(x',y)$.  After composing with the appropriate projection, we get distinguished morphisms $\psi': X_1 \to X'$ and $\phi': X_1 \to Y$, both having finite fibers, such that $\psi'(x_1)=x'$ and $\phi'(x_1)=y$. It follows that $(\phi \circ \phi')(x_1) = z$. We deduce that $z \in \Sigma_{Z}(x')$.
\end{proof}

\begin{prop}\label{prop:heckeunderproduct}
	Let $X\in \mathfrak{V}$ and $x\in X(K)$. For any two distinguished varieties $Y$ and $Z$, we have
\[ \Sigma_Y(x) \times \Sigma_Z(x) = \Sigma_{Y \times Z}(x).\]
\end{prop}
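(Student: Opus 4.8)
The inclusion $\supseteq$ is immediate from the observation recorded just after the definition of $\Sigma$-orbits: applying it to the projection distinguished morphisms $\pr_Y\colon Y\times Z\to Y$ and $\pr_Z\colon Y\times Z\to Z$ (which exist by \ref{ax:1}) shows that any $w\in\Sigma_{Y\times Z}(x)$ satisfies $\pr_Y(w)\in\Sigma_Y(x)$ and $\pr_Z(w)\in\Sigma_Z(x)$, so $w=(\pr_Y(w),\pr_Z(w))\in\Sigma_Y(x)\times\Sigma_Z(x)$.

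For $\subseteq$, I would argue as in the proofs of Propositions \ref{prop:heckeunderfinitecover} and \ref{prop:heckeunderfinitecover2}. Given $y\in\Sigma_Y(x)$ and $z\in\Sigma_Z(x)$, unwind the definitions to obtain distinguished morphisms $\phi_Y\colon A\to Y$, $\psi_Y\colon A\to X$ with $\psi_Y$ of finite fibers and a point $a\in A(K)$ with $\psi_Y(a)=x$, $\phi_Y(a)=y$, and likewise $\phi_Z\colon B\to Z$, $\psi_Z\colon B\to X$ with $\psi_Z$ of finite fibers and a point $b\in B(K)$ with $\psi_Z(b)=x$, $\phi_Z(b)=z$. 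Since $\psi_Y(a)=x=\psi_Z(b)$, the point $(a,b)$ is a $K$-point of $(A\times_X B)_{\red}\subset A\times B$, where the fibered product is taken along $\psi_Y$ and $\psi_Z$. By \ref{ax:2} and Lemma \ref{lem:finfiba2}, choose distinguished morphisms $\phi_i\colon C_i\to A\times B$ with finite fibers whose images cover $(A\times_X B)_{\red}$; we may assume $(a,b)\in\phi_1(C_1(K))$ and fix $c\in C_1(K)$ with $\phi_1(c)=(a,b)$.

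Now set $\alpha=\pr_A\circ\phi_1\colon C_1\to A$ and $\beta=\pr_B\circ\phi_1\colon C_1\to B$, and define $\psi'=\psi_Y\circ\alpha\colon C_1\to X$ together with the distinguished morphism $\phi'=(\phi_Y\circ\alpha,\phi_Z\circ\beta)\colon C_1\to Y\times Z$, which exists by \ref{ax:1}. Then $\psi'(c)=\psi_Y(a)=x$ and $\phi'(c)=(\phi_Y(a),\phi_Z(b))=(y,z)$, so it only remains to check that $\psi'$ has finite fibers; granting this, $(y,z)\in\Sigma_{Y\times Z}(x)$ and we are done.

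The only slightly delicate point — and the one I would flag as the main (minor) obstacle — is the finiteness of the fibers of $\psi'$ (equivalently, already of $\alpha$), since $\pr_A$ itself is far from having finite fibers. This is where the hypothesis that $\psi_Z$ has finite fibers enters: because $\phi_1(C_1)\subseteq(A\times_X B)_{\red}$, for $x_0\in X(K)$ the fiber $(\psi')^{-1}(x_0)$ is contained in $\phi_1^{-1}$ of the set of $K$-points $(a',b')$ with $\psi_Y(a')=x_0=\psi_Z(b')$, i.e.\ of $\psi_Y^{-1}(x_0)\times\psi_Z^{-1}(x_0)$, which is finite; since $\phi_1$ has finite fibers, so does $\psi'$. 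This is exactly the same finite-fiber bookkeeping already carried out in the proofs of the two preceding propositions, so no genuinely new idea is required.
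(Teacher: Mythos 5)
Your proof is correct and follows essentially the same route as the paper's: form a fibered product identifying the two base points over $X$, invoke \ref{ax:2} together with Lemma \ref{lem:finfiba2}, and check that the composite map to $X$ has finite fibers because its fibers land in $\psi_Y^{-1}(x_0)\times\psi_Z^{-1}(x_0)$. The only (immaterial) difference is that you take $A\times_X B$ along $\psi_Y$ and $\psi_Z$ directly, whereas the paper takes $(Y'\times Z')\times_{X\times X}X$ over the diagonal; the finite-fiber bookkeeping, which you correctly identify as the one delicate point, is identical.
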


\begin{proof}
The inclusion $\supset$ is obvious. Let $y \in \Sigma_Y(x)$ and $z \in \Sigma_Z(x)$. By definition there are distinguished morphisms $\phi_Y: Y' \to Y$, $\phi_Z: Z' \to Z$, $\psi_Y: Y' \to X$, and $\psi_Z: Z' \to X$, where $\psi_Y$ and $\psi_Z$ have finite fibers. Furthermore, there are $y'\in Y'(K)$ and $z' \in Z'(K)$ such that $\psi_Y(y')= \psi_Z(z') = x$, $\phi_Y(y')=y$, and $\phi_Z(z') = z$.

We consider the fibered product $(Y' \times Z') \times_{X \times X} X$, where the morphism $X \to X \times X$ is the diagonal embedding. We apply \ref{ax:2} and Lemma \ref{lem:finfiba2} to find distinguished morphisms $\chi_i: X_i \to Y\times Z'\times X$, $i=1, \dots ,n$, with finite fibers such that $\bigcup_{i=1}^{n}{\chi_i(X_i)} = ((Y' \times Z') \times_{X \times X} X)_{\mathrm{red}}$. We may assume that $(y',z',x)\in \chi_1(X_1(K))$ and pick $x_1\in X_1(K)$ such that $\chi_1(x_1)=(y',z',x)$. After composing with the appropriate projection, we get distinguished morphisms $\phi_{X_1}: X_1 \to Y' \times Z'$ and $\psi_{X_1}: X_1 \to X$, where $\psi_{X_1}$ has finite fibers, such that $\phi_{X_1}(x_1)=(y',z')$ and $\psi_{X_1}(x_1)=x$. Hence $(y',z') \in \Sigma_{Y' \times Z'}(x)$ and therefore $(y,z) \in \Sigma_{Y \times Z}(x)$.
\end{proof}

\begin{prop}\label{prop:heckeandspecialpoint}
	Let $X\in \mathfrak{V}$ and $x\in X(K)$. For any distinguished variety $Z$ and any special point $z \in Z(K)$, we have that $z \in \Sigma_Z(x)$.
\end{prop}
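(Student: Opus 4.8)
The plan is to realize the special point $z$ as the image of a distinguished morphism whose source is a point, and then to take a direct product with $X$ in order to manufacture exactly the data demanded by the definition of $\Sigma_Z(x)$.

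First I would invoke Lemma \ref{lem:finfibspecial}: since $z \in Z(K)$ is special, there exist a distinguished variety $P$ with $\mathcal{F}(P) = \Spec K$ and a distinguished morphism $\rho : P \to Z$ whose image is $\{z\}$. Next, using \ref{ax:1}, I would form the direct product $Y = X \times P$ with its distinguished projections $\pi_X : Y \to X$ and $\pi_P : Y \to P$. Under the identification of $\mathcal{F}(Y)$ with $\mathcal{F}(X) \times_K \mathcal{F}(P) = \mathcal{F}(X) \times_K \Spec K$ provided by that axiom, the morphism $\mathcal{F}(\pi_X)$ becomes the canonical isomorphism onto $\mathcal{F}(X)$; in particular $\pi_X$ has finite fibres, and there is a unique point $y \in Y(K)$ with $\pi_X(y) = x$.

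Finally I would check that the triple $(Y, \phi, y)$, with $\phi := \rho \circ \pi_P : Y \to Z$ together with the auxiliary morphism $\psi := \pi_X : Y \to X$, witnesses $z \in \Sigma_Z(x)$: the morphism $\psi$ is distinguished with finite fibres and satisfies $\psi(y) = x$; the morphism $\phi$ is distinguished, being a composite of the distinguished morphisms $\pi_P$ and $\rho$; and since $\mathcal{F}(P)$ consists of a single point, $\pi_P(y)$ is that point, so $\phi(y) = \rho(\pi_P(y)) = z$. Hence $z = \phi(y) \in \Sigma_Z(x)$ by the definition of the $\Sigma$-orbit.

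I do not expect a genuine obstacle in this argument. The only points requiring a word of care are that $\pi_X$ has finite fibres, which is immediate once $P$ is a point since then $\mathcal{F}(\pi_X)$ is an isomorphism, and that $\phi$ is a legitimate distinguished morphism, which holds because $\mathfrak{M}$ is the morphism class of a category and hence closed under composition.
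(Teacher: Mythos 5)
Your argument is correct and is essentially the paper's own proof: both invoke Lemma \ref{lem:finfibspecial} to realize $z$ as the image of a distinguished morphism from a point $P$, then use the product $X \times P$ with its projection to $X$ (which has finite fibres) and the composite with $\rho$ to exhibit the data defining $\Sigma_Z(x)$. Your write-up merely spells out the identification $\mathcal{F}(X\times P)\cong\mathcal{F}(X)$ a bit more explicitly; there is no substantive difference.
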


\begin{proof}
By Lemma \ref{lem:finfibspecial}, there exists a zero-dimensional distinguished variety $Y = \{y\}$ and a distinguished morphism $\phi: Y \to Z$ such that $z = \phi(y)$. The projection $\pi: Y \times X \to X$ has finite fibers and $(y,x) \in (Y \times X)(K)$ satisfies that $\pi(y,x) = x$ and $\phi(y) = z$. Hence $z \in \Sigma_Z(x)$.
\end{proof}

\begin{rmk}
	
	As images of special points under distinguished morphisms are special points and preimages of special points under distinguished morphisms with finite fibers consist of special points (see Lemma 6.7 of \cite{BD2}), we note that the $\Sigma$-orbit of a special point in a distinguished variety $Z$ is the set of special points of $Z$. This implies that, for a distinguished variety $X$, as soon as $x$ is not a special point of $X$, it does not lie in the $\Sigma$-orbit of any special point $z\in Z(K)$ even if $z\in \Sigma_Z(x)$ and therefore, in general, for a distinguished variety $Y$, $\Sigma$-orbits in $Y$ do not give a partition of $Y(K)$.
\end{rmk}

\begin{lem}\label{lem:heckeorbitintersection2}
	Let $X$ and $Z$ be distinguished varieties. Let $x \in X(K)$ and $z \in Z(K)$. The following are equivalent:
	\begin{enumerate}
		\item $z \in \Sigma_Z(x)$, and
		\item $\{(z,x)\}$ is an irreducible component of the intersection of $Z \times \{x\}$ with a special subvariety of $Z \times X$.
	\end{enumerate}
\end{lem}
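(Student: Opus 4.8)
The plan is to prove the two implications separately, using the definition of $\Sigma$-orbit in one direction and the existence of fibered products (axiom \ref{ax:2}) together with Lemma \ref{lem:finfiba2} in the other.

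First I would prove (1) $\Rightarrow$ (2). Suppose $z \in \Sigma_Z(x)$. By definition, there are distinguished morphisms $\phi: Y \to Z$ and $\psi: Y \to X$, with $\psi$ having finite fibers, and a point $y \in Y(K)$ such that $\phi(y) = z$ and $\psi(y) = x$. Consider the distinguished morphism $(\phi, \psi): Y \to Z \times X$, whose image $S = (\phi,\psi)(Y)$ is a special subvariety of $Z \times X$ containing the point $(z,x)$. I claim $\{(z,x)\}$ is an irreducible component of $S \cap (Z \times \{x\})$. Indeed, the fiber $(\phi,\psi)^{-1}(Z \times \{x\}) = \psi^{-1}(x)$ is finite since $\psi$ has finite fibers, so $S \cap (Z \times \{x\})$, being contained in the image of this finite set under $(\phi,\psi)$, is a finite set; hence each of its points, in particular $(z,x)$, is an irreducible component. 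This gives (2).

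For (2) $\Rightarrow$ (1), suppose $\{(z,x)\}$ is an irreducible component of $S \cap (Z \times \{x\})$ for some special subvariety $S$ of $Z \times X$. By Lemma \ref{lem:finfibspecial}, write $S = g(Y_0)$ for a distinguished morphism $g: Y_0 \to Z \times X$ with finite fibers; composing with the two projections gives distinguished morphisms $\phi_0 = \pr_Z \circ g: Y_0 \to Z$ and $\psi_0 = \pr_X \circ g: Y_0 \to X$, where $\psi_0$ has finite fibers (it is the composition of $g$, which has finite fibers, with $\pr_X$ restricted to $S$, which has finite fibers on $S \cap (Z \times \{x\})$—care is needed here, see below). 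Now form the fibered product $Y_0 \times_X \{x\}$ using $\psi_0: Y_0 \to X$ and the inclusion $\{x\} \hookrightarrow X$; apply \ref{ax:2} and Lemma \ref{lem:finfiba2} to obtain distinguished morphisms $\chi_i: X_i \to Y_0 \times \{x\}$ with finite fibers covering $(Y_0 \times_X \{x\})_{\red}$. Since $(z,x) \in S$, there is $y_0 \in Y_0(K)$ with $g(y_0) = (z,x)$, i.e. $\phi_0(y_0) = z$ and $\psi_0(y_0) = x$; this point lies in $Y_0 \times_X \{x\}$, so $(y_0, x)$ is in the image of some $\chi_i$, say with $\chi_i(x_i) = (y_0,x)$. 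Composing $\chi_i$ with the projection to $Y_0$ (finite fibers) and then with $\psi_0$ gives a distinguished morphism $X_i \to X$ with finite fibers sending $x_i$ to $x$, while composing with $\phi_0$ gives a distinguished morphism $X_i \to Z$ sending $x_i$ to $z$. Hence $z \in \Sigma_Z(x)$.

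The step I expect to be the main obstacle is verifying in the direction (2) $\Rightarrow$ (1) that the relevant projection morphism genuinely has finite fibers as a distinguished morphism, and more precisely that one can replace the ``global'' morphism $\psi_0 : Y_0 \to X$ (which need not have finite fibers, since $S$ need not project finitely to $X$) by one that does, using the hypothesis that $\{(z,x)\}$ is an \emph{isolated} point of $S \cap (Z \times \{x\})$. The fix is to pass first to the fibered product and work with the component $X_i$ through the relevant point: on $X_i \times_X \{x\}$, or rather after the further fibered product, the $X$-coordinate is pinned to $x$, so the morphism to $X$ factors through the point $\{x\}$ after restriction, but to keep finite fibers as required by the definition of $\Sigma$-orbit one argues via \ref{ax:4} (Lemma \ref{lem:finfibspecial}) that the image being a point forces the domain to a point or that the composite with $\pr_X$ already has finite fibers on the relevant special subvariety. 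I would be careful to spell this out, since it is the only place where the isolatedness hypothesis in (2) is actually used.
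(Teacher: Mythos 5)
Your proof of (1) $\Rightarrow$ (2) is correct and is exactly the paper's argument. The problem is in (2) $\Rightarrow$ (1), and it is precisely the gap you flag yourself at the end without closing it. Two things go wrong with the fibered-product route. First, \ref{ax:2} only applies to a pair of \emph{distinguished} morphisms, and the inclusion $\{x\} \hookrightarrow X$ is distinguished only when $x$ is a special point, which is not assumed; so the fibered product $Y_0 \times_X \{x\}$ is not available. Second, and more fundamentally, even granting such a construction, the composite $X_i \to Y_0 \xrightarrow{\psi_0} X$ does \emph{not} have finite fibers: the image of $X_i$ in $Y_0$ lands inside $\psi_0^{-1}(x)$, so the fiber of this composite over $x$ is essentially all of $X_i$, which is finite only if $X_i$ is a point --- exactly what you would need to prove. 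Pinning the $X$-coordinate to $x$ can never manufacture the finite-fibered morphism to $X$ that the definition of $\Sigma_Z(x)$ demands.

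The missing idea is to use the isolatedness hypothesis through the equidimensionality of fibers of projections restricted to special subvarieties (Lemma 7.4 of \cite{BD2}, the same statement invoked later in the proof of Theorem \ref{thm:zpimpliesml}): for the special subvariety $S = g(Y_0) \subset Z \times X$, all irreducible components of all fibers of $\pr_X|_S$ have the same dimension. Since $\{(z,x)\}$ is a component of the fiber over $x$, that common dimension is $0$, so $\pr_X|_S$ has finite fibers everywhere; composing with $g$ (finite fibers by Lemma \ref{lem:finfibspecial}) shows that $\psi_0 = \pr_X \circ g$ itself has finite fibers. Then any $y_0 \in Y_0(K)$ with $g(y_0) = (z,x)$ witnesses $z \in \Sigma_Z(x)$ directly from the definition, with no fibered product needed.
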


\begin{proof}
	By definition there are distinguished morphisms $\phi: Y \to Z$ and $\psi: Y \to X$, where the latter has finite fibers, and there is $y\in Y(K)$ such that $\psi(y)=x$ and $\phi(y)=z$. By \ref{ax:1} the morphism $\chi=(\phi,\psi):Y\to Z\times X$ is distinguished and therefore $\chi(Y)$ is special. Since $\psi$ has finite fibers $Z\times \{x\}\cap \chi(Y)$ is a finite set and thus $\{(z,x)=\chi(y)\}$ is an irreducible component of it.
	
	Suppose that (2) holds. By Lemma \ref{lem:finfibspecial}, there exists a distinguished morphism $\chi: Y \to Z \times X$ with finite fibers such that $\{(z,x)\}$ is an irreducible component of $\chi(Y) \cap (Z \times \{x\})$.
	
	It follows from \cite[Lemma 7.4]{BD2} that the restriction of the projection $Z \times X \to X$ to $\chi(Y)$ has finite fibers. Since $\chi$ has finite fibers, also the induced morphism $\psi: Y \to X$ has finite fibers. It follows that $(z,x) \in \chi(\psi^{-1}(x)) \subset \Sigma_{Z \times X}(x)$ and hence $z \in \Sigma_Z(x)$.
\end{proof}

\section{Examples I}\label{sec:exi}

\subsection{Semiabelian varieties}  \label{subsec:semiab1}
We consider the distinguished category of semiabelian varieties over $K$. We recall that distinguished morphisms are homomorphisms of algebraic groups composed with translations by torsion points and that weakly special subvarieties of a semiabelian variety $G$ are cosets of semiabelian subvarieties of $G$.

Given a subgroup $\Gamma$ of $G(K)$ we indicate by $\Gamma^{\mathrm{div}}$ the \emph{division group} of $\Gamma$, meaning the subgroup of $G(K)$ of points that have a multiple in $\Gamma$.

Let $G, G_0$ be semiabelian varieties over $K$ and $x\in G_0(K)$. We want to describe the $\Sigma$-orbit $\Sigma_G(x)$ of $x$ in $G$.

We start by noticing that there is a semiabelian variety $G_0'$ and an injective distinguished morphism $\phi_0: G_0' \to G_0$ such that $\phi_0(G_0')$ is the smallest special subvariety of $G_0$ that contains $x$. Because of Proposition \ref{prop:heckeunderfinitecover2}, we can replace $G_0$ by $G_0'$ and $x$ by its unique preimage under $\phi_0$ without changing $\Sigma(x)$. Having done so, we assume from now on that $x$ does not belong to any proper special subvariety of $G_0$.

The $\mathbb{Z}$-module $\Hom(G_0,G)$ is well known (see Lemma 1 in \cite{Lars} for instance) to be  finitely generated. We let $\psi_1,\dots ,\psi_n$ be generators of $\Hom(G_0,G)$. We claim that $\Sigma_G (x)= \langle \psi_1(x),\dots, \psi_n(x) \rangle^{\mathrm{div}}$.

We first show ``$\subset$". Let $g\in \Sigma_G(x)$. There are distinguished morphisms $\psi:Y\to G$ and $\phi:Y\to G_0$ such that $\phi $ has finite fibers and there is a $y\in Y(K)$ with $\psi(y)=g$ and $\phi(y)=x$. First, note that $\phi$ must be surjective since $\phi(Y)$ is a special subvariety of $G_0$ that contains $x$.
By Proposition \ref{prop:heckeunderfinitecover2}, for any translate $x'$ of $x$ by a torsion point of $G_0$, we have $\Sigma(x)=\Sigma(x')$. Since we also have that $\langle \psi_1(x),\dots, \psi_n(x) \rangle^{\mathrm{div}} = \langle \psi_1(x'),\dots, \psi_n(x') \rangle^{\mathrm{div}}$ for any such $x'$, we may and will replace $x$ by some such $x'$ and assume without loss of generality that $\phi$ is a homomorphism of algebraic groups. As a surjective homomorphism with finite fibers, $\phi$ is an isogeny. By Lemma 7.3/5 on p. 180 of \cite{Neronmodels}, we may replace $Y$ by $G_0$ and $\phi$ by the multiplication by some non-zero integer. Then, $g=t+\sum_{i=1}^n a_i \psi_i(y)$ for some torsion point $t$ and integers $a_1, \dots ,a_n$ and $Ny=x$ for some non-zero integer $N$. This clearly implies that $g\in \langle \psi_1(x),\dots, \psi_n(x) \rangle^{\mathrm{div}}$.

For the other inclusion, it is enough to show that, for all $g_1,g_2\in G(K)$ and $N\in \mathbb{Z}\setminus \{0\}$ we have $$g_1 \in \Sigma_G(x)\iff Ng_1 \in \Sigma_G(x) \ \ \mbox{ and } \ \ g_1, g_2 \in \Sigma_G(x) \implies g_1+g_2\in \Sigma_G(x).$$

The non-obvious direction of the equivalence follows from Proposition \ref{prop:heckeunderfinitecover}.  The implication can easily be deduced using Proposition \ref{prop:heckeunderproduct}.

We have just shown that $\Sigma_G(x)$ is a finite rank subgroup of $G(K)$. On the other hand, for a finite rank subgroup $\Gamma$ of $G(K)$, we would like to find a $\Sigma$-orbit that contains it.
Let $x_1,\dots , x_n\in G(K)$ be such that $\Gamma\subset \langle x_1,\dots, x_n \rangle^{\mathrm{div}}$ and set $x=(x_1,\dots, x_n) \in G^n(K)$.
It is easy to see that for any $\gamma \in \Gamma$ {the singleton $\{(\gamma,x)\}$} is {an irreducible} component of the intersection of $G\times \{x\}$ with a special subvariety of $G\times G^n$. By Lemma \ref{lem:heckeorbitintersection2}, we deduce that $\gamma\in \Sigma_G(x)$.

\subsection{Connected pure Shimura varieties} \label{sub:csv1}
At first glance, our definition of $\Sigma$-orbits in the case of connected pure/mixed Shimura varieties resembles the definition of a generalized Hecke orbit which can be found in Proposition-Definition 3.3 in \cite{Pink}. In this subsection, we compare these two notions and some other notions of generalized Hecke orbits that appear throughout the literature.

Several slightly different definitions of (connected) pure/mixed Shimura data are used in the literature. We will always use the definition from \cite{BD2} and translate all definitions of generalized Hecke orbits to this setting.

In the following, we freely use terminology from Section 3 of \cite{BD2}. Let $(P,X^+,\Gamma)$ be an object of the distinguished category $\mathfrak{C}_{\mathrm{pSv}}$ of connected pure Shimura varieties with associated functor $\mathcal{F}_{\mathrm{pSv}}$. Set $S = \mathcal{F}_{\mathrm{pSv}}(P,X^+,\Gamma)$ and let $x \in S(\mathbb{C})$. There is a canonical identification of $S(\mathbb{C})$ with $\Gamma \backslash X^+$. Let $\widetilde{x}$ be a lift of $x$ to $X^+ \subset \mathrm{Hom}(\mathbb{S}_{\mathbb{C}},P_{\mathbb{C}})$, where $\mathbb{S}$ denotes the Deligne torus, let $M$ denote the smallest algebraic subgroup of $P$ (defined over $\mathbb{Q}$) such that the image of $\widetilde{x}$ is contained in $M_{\mathbb{C}}$, and let $\widetilde{x}_0: \mathbb{S}_{\mathbb{C}} \to M_{\mathbb{C}}$ denote the induced homomorphism. Since $M$ is automatically connected, it follows from Lemma 4.3 in \cite{BD2} that there is a Shimura subdatum $(M,X_M^+)$ of $(P,X^+)$ such that $\widetilde{x}_0 \in X_M^+$. Thanks to Lemma 2.2 in \cite{OrrThesis} and Proposition 2.2 in \cite{RohlfsSchwermer}, we find an object $(M,X_M^+,\Gamma_M)$ of $\mathfrak{C}_{\mathrm{pSv}}$ and a morphism $(M,X_M^+,\Gamma_M) \to (P,X^+,\Gamma)$ of $\mathfrak{C}_{\mathrm{pSv}}$ extending the Shimura morphism $(M,X_M^+) \to (P,X^+)$.

Let $y \in S(\mathbb{C})$ and let $\widetilde{y}$ be a lift of $y$ to $X^+$. In the literature, we find the following properties that $\widetilde{y}$ should satisfy for $y$ to belong to the generalized Hecke orbit of $x$ (translated to the setting of \cite{BD2}):
\begin{itemize}
	\item for some faithful representation $\rho: P \to \GL(V)$ of $P$, the representations $\rho \circ \widetilde{x}$ and $\rho \circ \widetilde{y}$ are conjugate to each other by an element of $\GL(V)(\mathbb{Q})$ (implicitly in Edixhoven and Yafaev \cite{Edixhoven_Yafaev_2003}, see Remark 3.5 in \cite{Pink} and use the correspondence between Hodge structures and representations of the Deligne torus),
	\item for some choice of $\widetilde{y}$, there is an automorphism $\phi: P \to P$ such that $\widetilde{y} = \phi \circ \widetilde{x}$ (Pink \cite{Pink}), or
	\item for some choice of $\widetilde{y}$, there is a homomorphism $\phi: M \to P$ such that $\widetilde{y} = \phi \circ \widetilde{x}_0$ (Richard and Yafaev \cite{Richard_Yafaev}).
\end{itemize}

Since there is a Shimura subdatum $(M,X_M^+)$ of $(P,X^+)$ such that $\widetilde{x}_0 \in X_M^+$ and since we can always use Lemma 2.2 in \cite{OrrThesis} to adjust the congruence subgroup, the generalized Hecke orbits of $x$ in the sense of Pink and in the sense of Richard and Yafaev are both contained in $\Sigma_{S_{\mathbb{C}}}(x)$.

Furthermore, if $\rho \circ \widetilde{x} = g \cdot (\rho \circ \widetilde{y}) \cdot g^{-1}$ for some $g \in \GL(V)(\mathbb{Q})$, then we must have that $\rho(M) \subset g\rho(P)g^{-1}$. Using this, one can easily prove that the generalized Hecke orbit of $x$ in the sense of Edixhoven and Yafaev is contained in the generalized Hecke orbit in the sense of Richard and Yafaev and hence also in $\Sigma_{S_{\mathbb{C}}}(x)$.

We now want to relate the notion of $\Sigma$-orbits {to} the definition of structures of finite rank which can be found in Definition 1.5 of \cite{Aslanyan}. {One can easily do this} using Lemma \ref{lem:heckeorbitintersection2} and the characterization of special subvarieties of products of modular curves in \cite{Edixhoven_2005}.

Indeed, let $x=(x_1,\dots, x_{n_0})$ be a point in $Y(1)^{n_0}(\mathbb{C})$ and let $Z=Y(1)^n_\mathbb{C}$ for some integers $n_0,n\geq 0$. Then, $\Sigma_Z(x)$ is equal to the structure of finite rank $ (\overline{\Xi})^n$ where $\Xi$ consists of $x_1, \dots , x_{n_0}$ and all CM points of $Y(1)$ and $\overline{\Xi}$ is defined as in \cite{Aslanyan}.

On the other hand, if $ (\overline{\Xi})^n\subset Z=Y(1)^n_\mathbb{C}$ is a structure of finite rank {and} $x_1, \dots, x_{n_0}$ are the non-CM points of $\Xi$, then $ (\overline{\Xi})^n$ is contained in $\Sigma_Z(x)$ where $x=(x_1,\dots, x_{n_0})$.

A notion of structure of finite rank has also been introduced in \cite{AD22} in the setting of connected pure Shimura varieties. This definition is different from the one in \cite{Aslanyan}: in $Y(1)_{\mathbb{C}}^n$, a structure of finite rank in the sense of \cite{Aslanyan} is usually not contained in a structure of finite rank in the sense of \cite{AD22}. Indeed, it can be deduced from Theorems 2.4 and 2.11 in \cite{Richard_Yafaev} that a structure of finite rank in $Y(1)_{\mathbb{C}}^n$ in the sense of \cite{AD22} is contained in the union of all special points of $Y(1)_{\mathbb{C}}^n$ with a finite union of classical Hecke orbits in $Y(1)_{\mathbb{C}}^n$ as defined in \cite{Richard_Yafaev} and so contained in the union of all special points of $Y(1)_{\mathbb{C}}^n$ with a finite union of isogeny classes.
On the other hand, as soon as $n\geq 2$ and $\Xi \subset Y(1)(\mathbb{C})$ contains a non-special point $x$, at most finitely many non-special points, and infinitely many CM points that belong to pairwise distinct isogeny classes, the structure of finite rank $\overline{\Xi}^n$ in $Y(1)_{\mathbb{C}}^n$ in the sense of \cite{Aslanyan} contains points of the form $(x,s, \dots , s)$ {($s\in \Xi$ CM point)}, which are all non-special and belong to infinitely many isogeny classes.

This shows that, in general, given a $\Sigma$-orbit $\Sigma(x)$ and a connected pure Shimura variety $S$, one cannot find a structure of finite rank in the sense of \cite{AD22} containing $\Sigma_{S_\mathbb{C}}(x)$.

On the other hand, by the above considerations it is clear that, for a connected pure Shimura variety $S$, a structure of finite rank in the sense of \cite{AD22} in $S_{\mathbb{C}}$ is contained in $\Sigma_{S_\mathbb{C}}(x)$ for some $\Sigma$-orbit $\Sigma(x)$. 

Finally, let us see that our notion of $\Sigma$-orbit is equivalent to the notion of hybrid orbit by Richard and Yafaev introduced in Definition 6 in \cite{Richard_Yafaev_24a}. Suppose we have a Hodge generic point $x \in X^+_M$ for a connected pure Shimura variety $\mathcal{F}_{\mathrm{pSv}}(M,X^+_M,\Gamma_M)$. The hybrid Hecke orbit of $x$ in $\mathcal{F}_{\mathrm{pSv}}(P,X^+,\Gamma)$ is the image of the set of all $x' \in X^+$ such that $\phi(x^{\text{ad}}) = x'^{\text{ad}}$ for a homomorphism $\phi: M^{\text{ad}} \to M_{x'}^{\text{ad}}$ (that therefore induces a Shimura morphism $(M^{\text{ad}},X_M^{+,\text{ad}}) \to (M_{x'}^{\text{ad}},X_{x'}^{+,\text{ad}})$), where $M_{x'} \subset P$ denotes the Mumford-Tate group of $x'$ and $X^+_{x'} = M_{x'}(\mathbb{R})^+ \cdot x'$. The image in $\Gamma \backslash X^+$ of any such $x'$ will be in the $\Sigma$-orbit of the image of $x$ under $X^+_M \to \Gamma_M \backslash X^+_M$ according to our definition because of Propositions \ref{prop:heckeunderfinitecover} and \ref{prop:heckeunderfinitecover2} since:
\begin{itemize}
\item $(M,X^+_M) \to (M^{\text{ad}},X_M^{+,\text{ad}})$ is a Shimura immersion and
\item $(M_{x'},X^+_{x'}) \to (M_{x'}^{\text{ad}},X_{x'}^{+,\text{ad}})$ is a Shimura immersion.
\end{itemize}

Vice versa, suppose that $x'$ is a pre-image in $X^+$ of a point of our $\Sigma$-orbit of $x$ in $\Gamma \backslash X^+$. It can be deduced from Theorem 7.1 in \cite{Richard_Yafaev_24a} and from Lemma \ref{lem:heckeorbitintersection2} that $x'$ also belongs to the hybrid Hecke orbit of $x$.

\section{$\Sigma$-special subvarieties}\label{sec:sigmaspecial}

\begin{defn}\label{defn:sigmaspecial2}
	For any collection $\Sigma = \{\Sigma_{Z} \subset Z(K);\mbox{ $Z$ distinguished variety}\}$ of sets of closed points of distinguished varieties, a weakly special subvariety $W$ of a distinguished variety $Z$ is called \emph{$\Sigma$-special} if there are distinguished morphisms $\psi:Y\to Z$ and $\phi:Y\to X$ and a point $x\in \Sigma_X$ such that $W$ is an irreducible component of $\psi(\phi^{-1}(x))$.
\end{defn}
In what follows, we are going to consider $\Sigma(x)$-special subvarieties for some $x\in X(K)$ and $X\in \mathfrak{V}$. To lighten the notation, we omit the dependence on $x$. When we consider a $\Sigma$-orbit $\Sigma$, we mean that there are $X\in \mathfrak{V}$ and $x\in X(K)$ such that $\Sigma=\Sigma(x)$ and we use $\Sigma_Z$ to denote $\Sigma_Z(x)$.

\begin{lem}\label{lem:specissigmaspec}
	Let $\Sigma$ be a $\Sigma$-orbit and let $Z$ be a distinguished variety. Any special subvariety of $Z$ is $\Sigma$-special.
\end{lem}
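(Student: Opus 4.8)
The plan is to realize any special subvariety $W\subset Z$ as an irreducible component of some $\psi(\phi^{-1}(x))$ with $x$ in the relevant $\Sigma$-orbit by pushing the auxiliary parameter variety all the way down to the final object of the category, where the relevant $\Sigma$-orbit is automatically nonempty. In other words, a special subvariety is ``weakly special with trivial parameter'', and the only thing to check is that the unique point of the final object always lies in a $\Sigma$-orbit.

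Concretely, I would first apply Lemma \ref{lem:finfibspecial} to write $W=\chi(T)$ for a distinguished variety $T$ and a distinguished morphism $\chi\colon T\to Z$ with finite fibers. Since $T$ is irreducible, so is its image $W$ (which is closed by \ref{ax:4}), so $W$ is its own unique irreducible component. Next, let $e$ denote the final object supplied by \ref{ax:3}, so that $\mathcal{F}(e)$ is a single point, which I also write $e$; this point is special, being the image of $\id_e$, so Proposition \ref{prop:heckeandspecialpoint} gives $e\in\Sigma_e$. (Alternatively, writing $\Sigma=\Sigma(x)$ with $x\in X(K)$ as in our conventions, the pair consisting of $\id_X\colon X\to X$ and the unique morphism $X\to e$, evaluated at $x$, already witnesses $e\in\Sigma_e$.) Then I would take $Y=T$, $\psi=\chi\colon T\to Z$, and $\phi\colon T\to e$ the unique distinguished morphism; since $\phi^{-1}(e)=T$, we get $\psi(\phi^{-1}(e))=\chi(T)=W$, and $W$ is an irreducible component of this (the only one). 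Comparing with Definition \ref{defn:sigmaspecial2}, with the point there taken to be $e\in\Sigma_e$, this exhibits $W$ as $\Sigma$-special, provided we already know $W$ is weakly special; but the very same construction with $e(K)$ in place of $\Sigma_e$ shows $W$ is an irreducible component of $\phi(\psi^{-1}(x))$ in the sense of the definition of weakly special (this is also recorded in \cite{BD2}).

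I do not expect any genuine obstacle here: the entire content is the single remark that $\Sigma_e$ is nonempty, which is immediate from \ref{ax:3} together with Proposition \ref{prop:heckeandspecialpoint}. The only points requiring a little care are the routine bookkeeping ones — checking that $\chi$, now viewed together with the structure morphism $T\to e$, furnishes exactly the data demanded by Definition \ref{defn:sigmaspecial2}, and that $\phi^{-1}(e)$ is literally all of $T$, so that its image under $\psi$ is $W$ itself and not some proper subvariety.
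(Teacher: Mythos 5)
Your proof is correct and is essentially the paper's argument: both route through the final object, whose unique point is special and hence lies in every $\Sigma$-orbit by Proposition \ref{prop:heckeandspecialpoint}, and then realize $W$ as the image of the full fiber over that point. If anything, your version is slightly more complete, since the paper's proof as written takes $\psi=\id_Z$ (which literally only treats $W=Z$), whereas you correctly take $\psi$ to be a distinguished morphism with image $W$.
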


\begin{proof}
	We take $\psi = \id_Z$, $\phi: Z \to X$ equal to the canonical morphism to the final object of the category, and $x \in X(K)$ equal to the unique point of the variety (which is special, being the image of $X(K)$ under the identity morphism) and apply Proposition \ref{prop:heckeandspecialpoint}.
\end{proof}

\begin{lem}\label{lem:sigmaspecial}
	Suppose that $\Sigma$ is a $\Sigma$-orbit. Let $W$ be a $\Sigma$-special subvariety of a distinguished variety $Z$. Then there are distinguished morphisms $\psi:Y\to Z$ and $\phi:Y\to X$ and a point $x\in \Sigma_X$ such that $W = \psi(\phi^{-1}(x))$.
\end{lem}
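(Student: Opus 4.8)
The plan is to start from the definition of $\Sigma$-special: there exist distinguished morphisms $\psi_0:Y_0\to Z$ and $\phi_0:Y_0\to X$ and a point $x\in\Sigma_X$ such that $W$ is an irreducible component of $\psi_0(\phi_0^{-1}(x))$, but not necessarily all of it. The goal is to cut $Y_0$ down so that the fiber over $x$ maps exactly onto $W$. Since $x\in\Sigma_X=\Sigma_X(x_0)$ for the base point $x_0\in X_0(K)$ defining the $\Sigma$-orbit, unwinding the definition of $\Sigma_X(x_0)$ gives a distinguished variety $X'$, a distinguished morphism $\alpha:X'\to X$, a distinguished morphism $\beta:X'\to X_0$ with finite fibers, and a point $x'\in X'(K)$ with $\alpha(x')=x$ and $\beta(x')=x_0$. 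The first step is therefore to replace the pair $(\psi_0,\phi_0)$ by a pair over $X'$: form the fibered product $Y_0\times_X X'$ via $\phi_0$ and $\alpha$, apply \ref{ax:2} and Lemma \ref{lem:finfiba2} to write its reduction as a finite union $\bigcup_i\chi_i(X_i)$ with $\chi_i$ of finite fibers, and compose with the two projections to get, for each $i$, distinguished morphisms $\psi_i:X_i\to Z$ and $\phi_i':X_i\to X'$.

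The second step is to arrange that $W$ is an irreducible component of $\psi_j(\phi_j'^{-1}(x'))$ for some index $j$. This holds because the preimage of $x$ under $\phi_0$ pulls back, along the finite-to-one maps $\chi_i$, to cover $\phi_i'^{-1}(x')$ (using that $\alpha^{-1}(x)\supset\{x'\}$ and that the $\chi_i$ jointly surject onto the fibered product), so $\bigcup_i \psi_i(\phi_i'^{-1}(x'))$ surjects onto $\psi_0(\phi_0^{-1}(x))$ via maps with finite fibers; an irreducible component of a finite union, pulled back and pushed forward by finite maps, still contains $W$ as a component of one of the pieces. Replacing $(Y_0,\psi_0,\phi_0,X,x)$ by $(X_j,\psi_j,\beta\circ\phi_j',X_0,x_0)$ — and noting $\beta\circ\phi_j'$ still has finite fibers over $x_0$ near the relevant points, since $\beta$ has finite fibers — we may henceforth assume that $x=x_0$ is the defining base point and that $\phi:Y\to X$ has finite fibers. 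Actually, by Lemma \ref{lem:finfibspecial} applied appropriately (or directly by Lemma \ref{lem:heckeorbitintersection2}), we can even assume $\phi$ has finite fibers globally; the key consequence is that $\phi^{-1}(x)$ is a \emph{finite} set of points.

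The third and final step is the actual "component-extraction". With $\phi$ having finite fibers, $\phi^{-1}(x)$ is finite, say $\{y_1,\dots,y_m\}$, and $\psi(\phi^{-1}(x))=\bigcup_k \psi(\{y_k\})$ is a finite set of points — wait, this is too strong, so instead we keep $\psi$ general and only $\phi$ finite-to-one is wrong too. Let me restate: we want $\psi:Y\to Z$, $\phi:Y\to X$, $x\in\Sigma_X$ with $\psi(\phi^{-1}(x))$ \emph{equal} to $W$, not merely containing it as a component. The standard device is to take $Y'$ to be the distinguished variety whose image in $Z$ is the special subvariety $\langle W\rangle$, or better, to restrict: replace $Y$ by an irreducible component $Y_W$ of $\phi^{-1}(x)$ whose image under $\psi$ is $W$. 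The obstacle is that irreducible components of fibers of distinguished morphisms need not be distinguished varieties. To get around this, I would use the weakly-special structure of $W$: by definition there are distinguished morphisms $\psi_1:Y_1\to X$, $\phi_1:Y_1\to Z$ and a point $x_1\in X(K)$ with $W$ an irreducible component of $\phi_1(\psi_1^{-1}(x_1))$, and then intersect with the $\Sigma$-special data. Concretely, form the fibered product of $Y$ (mapping to $Z$ via $\psi$) with $Y_1$ (mapping to $Z$ via $\phi_1$), apply \ref{ax:2} and Lemma \ref{lem:finfiba2}, select the component $X_\ast$ whose image in $Z$ is exactly $W$ (possible since $W$ is an irreducible component of $\psi(\phi^{-1}(x))\cap\phi_1(\psi_1^{-1}(x_1))$ by weak-specialness), and set $x_\ast=(x,x_1)\in (X\times X)(K)$; since $x\in\Sigma_X$ and $x_1$ can be absorbed using Proposition \ref{prop:heckeandspecialpoint} if $x_1$ is special, or more carefully using that $\langle W\rangle_{\ws}$'s defining point can be chosen compatibly, one checks $x_\ast\in\Sigma_{X\times X}=\Sigma_X\times\Sigma_X$ by Proposition \ref{prop:heckeunderproduct}, provided $x_1\in\Sigma_X$ as well. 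The delicate point, which I expect to be the main obstacle, is precisely ensuring the auxiliary point $x_1$ used to cut out $W$ as a weakly special subvariety lies in the appropriate $\Sigma$-orbit; I would handle this by choosing $x_1$ to be the image of $x$ (or a point of $\phi^{-1}(x)$) under a suitable distinguished morphism, so that $x_1\in\Sigma_X$ automatically, and then verify that this choice still cuts out $W$. Once $X_\ast$ with $\psi(\,\cdot\,)=W$ exactly is constructed and the base point verified to lie in the product $\Sigma$-orbit, projecting back via $X\times X\to X$ and invoking Proposition \ref{prop:heckeunderproduct} together with the remark that images under distinguished morphisms preserve $\Sigma$-orbits completes the argument.
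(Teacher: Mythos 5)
Your write-up never actually closes the gap that is the entire content of this lemma, namely turning ``$W$ is an irreducible component of $\psi(\phi^{-1}(x))$'' into ``$W = \psi(\phi^{-1}(x))$''. The paper's proof does this by following Lemma 6.6 of \cite{BD2}: one applies \ref{ax:4} to $\phi: Y \to X$ to obtain a factorization $\phi \circ \phi_1 = \phi_3 \circ \phi_2$ with $\phi_1$ finite surjective, $\phi_2$ surjective with \emph{irreducible} equidimensional fibers, and $\phi_3$ with finite fibers. Then $\phi_1^{-1}(\phi^{-1}(x)) = \bigcup_{x' \in \phi_3^{-1}(x)} \phi_2^{-1}(x')$ is a finite union of irreducible sets, so $\psi(\phi^{-1}(x))$ is a finite union of irreducible closed sets $\psi(\phi_1(\phi_2^{-1}(x')))$, and $W$, being an irreducible component of that union, must \emph{equal} one of them; finally $x' \in \phi_3^{-1}(x) \subset \phi_3^{-1}(\Sigma_X) = \Sigma_{X'}$ by Proposition \ref{prop:heckeunderfinitecover}, since $\phi_3$ has finite fibers. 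Your proposal never invokes \ref{ax:4} for this purpose. Instead, in your third step you form a fibered product with the weakly special presentation of $W$ and ``select the component $X_\ast$ whose image in $Z$ is exactly $W$'' --- but nothing guarantees such a component exists: the fiber of $X_\ast$ over $x_\ast$ may itself be reducible, with components whose images in $Z$ strictly contain $W$ or are incomparable to it, so you are back at the original problem.

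There are further unresolved points. You yourself flag that the auxiliary point $x_1$ cutting out $W$ as a weakly special subvariety must be shown to lie in the relevant $\Sigma$-orbit, and your proposed fix (``choose $x_1$ to be the image of $x$ \dots and then verify that this choice still cuts out $W$'') is not carried out; in the paper's argument this issue never arises because the only new point introduced is a $\phi_3$-preimage of $x$, handled by Proposition \ref{prop:heckeunderfinitecover}. Your first reduction (unwinding $\Sigma_X(x_0)$ to pass to the defining base point $x_0$) is unnecessary --- the lemma only asks for \emph{some} $x \in \Sigma_X$, which the definition of $\Sigma$-special already provides --- and it contains an error: $\beta \circ \phi_j'$ need not have finite fibers just because $\beta$ does. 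The visible mid-proof backtracking in step three also leaves the argument in an inconsistent state. In short, the missing idea is the \ref{ax:4}-factorization that makes the relevant fibers irreducible; without it the component extraction does not go through.
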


\begin{proof}
	Follow the proof of Lemma 6.6 in \cite{BD2} and apply Proposition \ref{prop:heckeunderfinitecover}.
\end{proof}

\begin{lem}\label{lem:sigmaspecialchar}
	Suppose that $\Sigma$ is the $\Sigma$-orbit of $x \in X(K)$ for some distinguished variety $X$. Let $W$ be a subvariety of a distinguished variety $Z$. The following are equivalent:
	\begin{enumerate}
		\item $W$ is $\Sigma$-special,
		\item $W \times \{x\}$ is an irreducible component of the intersection of $Z \times \{x\}$ with a special subvariety of $Z \times X$, and
		\item there are distinguished morphisms $\psi:Y\to Z$ and $\phi:Y\to X$ such that $W$ is an irreducible component of $\psi(\phi^{-1}(x))$.
	\end{enumerate}
\end{lem}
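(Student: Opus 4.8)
The plan is to establish the cycle $(1)\Leftrightarrow(3)\Leftrightarrow(2)$, in which the equivalence $(2)\Leftrightarrow(3)$ and the implication $(3)\Rightarrow(1)$ are essentially immediate, while $(1)\Rightarrow(3)$ carries all the weight. For $(3)\Rightarrow(2)$ I would observe that, given distinguished morphisms $\psi\colon Y\to Z$ and $\phi\colon Y\to X$ with $W$ an irreducible component of $\psi(\phi^{-1}(x))$, the morphism $(\psi,\phi)\colon Y\to Z\times X$ is distinguished by \ref{ax:1}, so its image $V$ is special, and $V\cap(Z\times\{x\})=\psi(\phi^{-1}(x))\times\{x\}$; hence $W\times\{x\}$ is an irreducible component of it. For $(2)\Rightarrow(3)$ I would write the given special subvariety as $\chi(Y)$ for a distinguished morphism $\chi\colon Y\to Z\times X$ with finite fibres (Lemma \ref{lem:finfibspecial}), take $\psi$ and $\phi$ to be the compositions of $\chi$ with the two projections, and note again that $\chi(Y)\cap(Z\times\{x\})=\psi(\phi^{-1}(x))\times\{x\}$. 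For $(3)\Rightarrow(1)$ it is enough to observe that $x\in\Sigma_X(x)=\Sigma_X$ (take the identity $X\to X$ for both $\phi$ and $\psi$ in the definition of the $\Sigma$-orbit), so the data witnessing $(3)$ also witness that $W$ is $\Sigma$-special.

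The main step is $(1)\Rightarrow(3)$. First I would invoke Lemma \ref{lem:sigmaspecial} to obtain distinguished morphisms $\psi\colon Y\to Z$, $\phi\colon Y\to X'$ and a point $x'\in\Sigma_{X'}$ with $W=\psi(\phi^{-1}(x'))$ \emph{exactly}; in particular $W$ is irreducible. By the definition of $\Sigma_{X'}=\Sigma_{X'}(x)$ there are distinguished morphisms $\phi'\colon Y'\to X'$ and $\psi'\colon Y'\to X$, the latter with finite fibres, and a point $y'\in Y'(K)$ with $\phi'(y')=x'$ and $\psi'(y')=x$. Forming the fibered product of $\phi$ and $\phi'$ over $X'$ and applying \ref{ax:2} together with Lemma \ref{lem:finfiba2}, I obtain distinguished morphisms $\theta_i\colon T_i\to Y\times Y'$ with finite fibres whose images cover $(Y\times_{X'}Y')_{\red}$. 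Putting $\psi_i:=\psi\circ\pr_Y\circ\theta_i$ and $\phi_i:=\psi'\circ\pr_{Y'}\circ\theta_i$ and unwinding the definitions (using that $\psi'^{-1}(x)$ is finite, since $\psi'$ has finite fibres), one finds
\[\bigcup_{i}\psi_i(\phi_i^{-1}(x))=\bigcup_{y''\in\psi'^{-1}(x)}\psi\bigl(\phi^{-1}(\phi'(y''))\bigr),\]
which contains $\psi(\phi^{-1}(x'))=W$ because $\phi'(y')=x'$. If $W$ is an irreducible component of this union, then, since an irreducible component of a finite union of closed sets is an irreducible component of one of them, $W$ is an irreducible component of $\psi_{i_0}(\phi_{i_0}^{-1}(x))$ for some $i_0$, and $\tilde\psi:=\psi_{i_0}$, $\tilde\phi:=\phi_{i_0}$ witness $(3)$.

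The crux — and the step I expect to be the main obstacle — is showing that $W$ really is an irreducible component of that union, i.e.\ that the irreducible $W=\psi(\phi^{-1}(x'))$ is not strictly contained in any translate $\psi(\phi^{-1}(\phi'(y'')))$ coming from another point $y''$ of the finite fibre $\psi'^{-1}(x)$. I would deduce this from an equidimensionality fact extracted from \ref{ax:4}: applying \ref{ax:4} to the distinguished morphism $(\psi,\phi)\colon Y\to Z\times X'$ produces, in particular, a distinguished morphism $\rho\colon N\to Z\times X'$ with finite fibres and image $(\psi,\phi)(Y)$, and composing $\rho$ with the projection to $X'$ yields a distinguished morphism $N\to X'$ whose fibres are equidimensional of a dimension $e_0$ not depending on the base point in its image $\phi(Y)$. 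Pushing these fibres forward along $\rho$ (which has finite fibres, hence preserves dimension of closed subsets) shows that for every $u\in\phi(Y)$ no irreducible component of $\psi(\phi^{-1}(u))$ has dimension exceeding $e_0$, while $\psi(\phi^{-1}(x'))=W$, being irreducible, has dimension exactly $e_0$. Consequently no irreducible closed set strictly containing $W$ can be contained in $\bigcup_{y''}\psi(\phi^{-1}(\phi'(y'')))$, so $W$ is one of its irreducible components, as required.

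Everything else is routine: unwinding definitions, using the functorial properties of $\Sigma$-orbits, and using axioms \ref{ax:1}, \ref{ax:2}, \ref{ax:4} — together with the fact, established in \cite{BD2}, that images of fibres of distinguished morphisms are closed (needed so that the sets $\psi(\phi^{-1}(u))$ and $\psi_i(\phi_i^{-1}(x))$ are closed and the finite-union principle applies).
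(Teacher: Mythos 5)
Your proposal is correct and follows essentially the same route as the paper: the easy implications are handled identically, and for the hard direction you invoke Lemma \ref{lem:sigmaspecial}, unravel the definition of $\Sigma_{X'}(x)$, form the same fibered product over $X'$ via \ref{ax:2} and Lemma \ref{lem:finfiba2}, and finish with a dimension count grounded in \ref{ax:4}. The only (harmless) variation is in that last count: you use equidimensionality of $u \mapsto \psi(\phi^{-1}(u))$ over $\phi(Y)$ (in effect Lemma 7.4 and Theorem 7.3 of \cite{BD2}, which do require a second application of \ref{ax:4} to the morphism $N \to X'$ rather than following ``in particular'' from the first), where the paper compares dimensions of preimages under $\chi_1$ via two applications of Lemma 7.6 of \cite{BD2}.
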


\begin{proof}
	It follows directly from the definition of being $\Sigma$-special that (3) implies (1).
	
	Suppose that (2) holds. Then there exists a distinguished morphism $\chi: Y \to Z \times X$ such that $W \times \{x\}$ is an irreducible component of $\chi(Y) \cap (Z \times \{x\})$. Let $\phi$ and $\psi$ denote the compositions of $\chi$ with the projections to $X$ and $Z$ respectively, then
	\[ \chi(Y) \cap (Z \times \{x\}) = \psi(\phi^{-1}(x)) \times \{x\}.\]
	Hence $W$ is an irreducible component of $\psi(\phi^{-1}(x))$ and (3) holds.
	
	Finally, we want to show that (1) implies (2). So suppose that (1) holds. By Lemma \ref{lem:sigmaspecial}, there exist distinguished morphisms $\phi: Y \to Z'$ and $\psi: Y \to Z$ as well as a point $z' \in \Sigma_{Z'}(x)$ such that $W = \psi(\phi^{-1}(z'))$. Unravelling the definition of $\Sigma_{Z'}(x)$, we obtain distinguished morphisms $\phi': Y' \to X$ and $\psi': Y' \to Z'$ as well as a point $y' \in Y'(K)$ such that $\phi'$ has finite fibers, $\phi'(y') = x$, and $\psi'(y') = z'$.

	We consider the fibered product $Y \times_{Z'} Y'$. We apply \ref{ax:2} and Lemma \ref{lem:finfiba2} to find distinguished morphisms $\chi_i: X_i \to Y\times Y'$, $i=1, \dots ,n$, with finite fibers such that $\bigcup_{i=1}^{n}{\chi_i(X_i)} = (Y \times_{Z'} Y')_{\mathrm{red}}$. There is an irreducible component $W_1$ of $\phi^{-1}(z')$ such that $W = \psi(W_1)$. We have $W_1 \times \{y'\} \subset (Y \times_{Z'} Y')_{\mathrm{red}}$. We can assume without loss of generality that $W_1 \times \{y'\} \subset \chi_1(X_1)$ and therefore $W \times \{x\} \subset (Z \times \{x\}) \cap ((\psi \times \phi') \circ \chi_1)(X_1)$.

	Let $W'$ be a subvariety of $Z$ such that $W \subset W'$ and $W' \times \{x\}$ is an irreducible component of $(Z \times \{x\}) \cap ((\psi \times \phi') \circ \chi_1)(X_1)$. We want to show that $W' = W$. Since $\chi_1(X_1) \subset (Y \times_{Z'} Y')_{\mathrm{red}}$, we have that
	\begin{align*} \chi_1^{-1}(W_1 \times \{y'\}) \subset ((\psi \times \phi') \circ \chi_1)^{-1}(W \times \{x\}) \subset ((\psi \times \phi') \circ \chi_1)^{-1}(W' \times \{x\}) \\
	\subset \chi_1^{-1}\left(\bigcup_{\phi'(y'') = x}{\phi^{-1}(\psi'(y'')) \times \{y''\}}\right) = \bigcup_{\phi'(y'') = x}{\chi_1^{-1}\left(\phi^{-1}(\psi'(y'')) \times \{y''\}\right)},
	\end{align*}
	where the union on the right-hand side is finite since $\phi'$ has finite fibers. Furthermore, it follows from \ref{ax:4} and the fact that $\chi_1$ has finite fibers that every irreducible component of the right-hand side has dimension equal to at most $\dim \phi^{-1}(z') = \dim W_1$. Let $W''$ be an irreducible component of $((\psi \times \phi') \circ \chi_1)^{-1}(W' \times \{x\})$ such that $((\psi \times \phi') \circ \chi_1)(W'') = W' \times \{x\}$. It follows from Lemma 7.6 in \cite{BD2}, applied to $(\psi \times \phi') \circ \chi_1: X_1 \to ((\psi \times \phi') \circ \chi_1)(X_1)$, that
	\begin{align*}
		\dim W' = \dim W' \times \{x\} = \dim W'' - (\dim X_1 - \dim ((\psi \times \phi') \circ \chi_1)(X_1)) \\
		\leq \dim W_1 - (\dim X_1 - \dim ((\psi \times \phi') \circ \chi_1)(X_1)).
	\end{align*}
	At the same time, there exists an irreducible component of $\chi_1^{-1}(W_1 \times \{y'\}) \subset ((\psi \times \phi') \circ \chi_1)^{-1}(W \times \{x\})$ that surjects onto $W_1 \times \{y'\}$. Since $\chi_1$ has finite fibers, this irreducible component must have dimension equal to $\dim W_1$ and another application of Lemma 7.6 in \cite{BD2} shows that
	\[ \dim W_1 - (\dim X_1 - \dim ((\psi \times \phi') \circ \chi_1)(X_1)) \leq \dim(W \times \{x\}) = \dim W.\]
	Putting the two inequalities together, we deduce that $\dim W' \leq \dim W$. Since $W \subset W'$ and both $W$ and $W'$ are irreducible, it follows that $W = W'$ as desired.
\end{proof}

\begin{defn}\label{defn:sigmaspecial1}
	For any set $\Sigma_Z$ of closed points of a distinguished variety $Z$, a weakly special subvariety of $Z$ is called \emph{$\Sigma_Z$-point-special} if it contains a point of $\Sigma_Z$.
\end{defn}

Given a $\Sigma $-orbit $\Sigma$ and a distinguished variety $Z$, we want to relate the concepts of a $\Sigma$-special subvariety and of a $\Sigma_Z$-point-special subvariety of $Z$. 

The following lemma is a consequence of Lemmas \ref{lem:sigmaspecialchar} and \ref{lem:heckeorbitintersection2}.

\begin{lem}\label{lem:sigmaspecialpoint}
Let $\Sigma$ be a $\Sigma$-orbit and let $z$ be a closed point of a distinguished variety $Z$. Then, $\{z\}$ is $\Sigma_Z$-point-special if and only if it is $\Sigma$-special.
\end{lem}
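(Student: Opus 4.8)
The plan is to show that each of the two properties is equivalent to the single condition $z \in \Sigma_Z(x)$, where we fix a distinguished variety $X$ and a point $x \in X(K)$ with $\Sigma = \Sigma(x)$ (so that $\Sigma_Z = \Sigma_Z(x)$), and then simply compare the two equivalences. In fact this is already announced in the sentence preceding the lemma: it should fall out formally from Lemma \ref{lem:sigmaspecialchar} and Lemma \ref{lem:heckeorbitintersection2}.

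First I would handle the $\Sigma$-special side. Applying Lemma \ref{lem:sigmaspecialchar} to the subvariety $W = \{z\}$ of $Z$, the equivalence of conditions (1) and (2) there says that $\{z\}$ is $\Sigma$-special if and only if $\{(z,x)\} = \{z\} \times \{x\}$ is an irreducible component of the intersection of $Z \times \{x\}$ with a special subvariety of $Z \times X$. By Lemma \ref{lem:heckeorbitintersection2} (equivalence of its conditions (1) and (2)), this in turn holds precisely when $z \in \Sigma_Z(x)$. Hence $\{z\}$ is $\Sigma$-special if and only if $z \in \Sigma_Z(x)$.

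Next I would treat the $\Sigma_Z$-point-special side. By Definition \ref{defn:sigmaspecial1}, $\{z\}$ is $\Sigma_Z$-point-special exactly when $\{z\}$ is a weakly special subvariety of $Z$ and contains a point of $\Sigma_Z$, i.e. $z \in \Sigma_Z(x)$. The only thing requiring care — and the nearest thing to an obstacle — is that the ``weakly special'' clause should not be an extra constraint: but if $z \in \Sigma_Z(x)$, then by the previous paragraph $\{z\}$ is $\Sigma$-special, hence weakly special by Definition \ref{defn:sigmaspecial2}. Therefore $\{z\}$ is $\Sigma_Z$-point-special if and only if $z \in \Sigma_Z(x)$ as well, and combining the two equivalences proves the lemma. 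Beyond this bookkeeping point there is no real difficulty: the statement is a formal consequence of the two cited characterizations via the chain Lemma \ref{lem:heckeorbitintersection2} to Lemma \ref{lem:sigmaspecialchar} to Definition \ref{defn:sigmaspecial2}.
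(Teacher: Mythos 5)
Your proof is correct and follows exactly the route the paper intends: the paper gives no written proof beyond asserting that the lemma is a consequence of Lemmas \ref{lem:sigmaspecialchar} and \ref{lem:heckeorbitintersection2}, and your argument is precisely the fleshed-out version of that, reducing both sides to the condition $z \in \Sigma_Z(x)$. Your observation that the ``weakly special'' clause in Definition \ref{defn:sigmaspecial1} is absorbed because $z \in \Sigma_Z(x)$ already forces $\{z\}$ to be $\Sigma$-special (hence weakly special) is the right way to close the one non-formal gap.
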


\begin{defn}Let $\Sigma$ be a $\Sigma$-orbit and let $Z$ be a distinguished variety. We say that a $z\in Z(K)$ is a \emph{$\Sigma$-special point} if the singleton $\{z\}$ is a $\Sigma$-special subvariety of $Z$.
\end{defn}

Note that, if $\Sigma=\Sigma(x)$ for some $x\in X(K)$, then the $\Sigma$-special points of $Z$ are exactly the points in $\Sigma_Z(x)$, by Lemma \ref{lem:sigmaspecialpoint}.

\begin{lem}\label{lem:finfibweaklyspecial}
	Suppose that the ambient distinguished category satisfies \ref{ax:5}. Let $W$ be a weakly special subvariety of a distinguished variety $Z$. Then there exist distinguished morphisms $\psi:Y\to Z$ and $\phi:Y\to X$ as well as a point $x\in X(K)$ such that $W = \psi(\phi^{-1}(x))$ and $\psi$ has finite fibers.
\end{lem}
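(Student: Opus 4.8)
The plan is to feed a degenerate instance of axiom \ref{ax:5} into an ``upgrading'' argument of the type used to prove Lemma 6.6 in \cite{BD2}. The first observation is that $W$ is weakly optimal for $V:=W$ in $Z$, vacuously: the defining inequality $\delta_{\ws}(U)>\delta_{\ws}(W)$ is only required for subvarieties $U$ with $W\subsetneq U\subset W$, of which there are none. Applying \ref{ax:5} to the pair $(Z,W)$ and relabelling so as to match the notation of the statement, we obtain distinguished morphisms $\psi:Y\to Z$ with finite fibers and $\phi:Y\to X$ together with a point $x\in X(K)$ such that $W=\langle W\rangle_{\ws}$ is an irreducible component of $\psi(\phi^{-1}(x))$; here $\langle W\rangle_{\ws}=W$ since $W$ is weakly special. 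It remains only to promote ``irreducible component of'' to ``equals'' while keeping $\psi$ with finite fibers.

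For this I would apply \ref{ax:4} to $\phi:Y\to X$, producing distinguished morphisms $\phi_1:W_1\to Y$, $\phi_2:W_1\to X'$, $\phi_3:X'\to X$ with $\phi\circ\phi_1=\phi_3\circ\phi_2$, where $\phi_1$ is finite and surjective, $\phi_3$ has finite fibers, $\phi_2$ is surjective, and all nonempty fibers of $\phi_2$ are irreducible. Writing $\phi_3^{-1}(x)=\{z_1,\dots,z_k\}$, a finite set, we get $\phi_1^{-1}(\phi^{-1}(x))=\phi_2^{-1}(\phi_3^{-1}(x))=\bigcup_i\phi_2^{-1}(z_i)$, and hence, $\phi_1$ being surjective, $\psi(\phi^{-1}(x))=\bigcup_i(\psi\circ\phi_1)(\phi_2^{-1}(z_i))$. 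Each set $(\psi\circ\phi_1)(\phi_2^{-1}(z_i))$ is irreducible, being the image of an irreducible fiber, and it is also closed: by \ref{ax:1} there is a distinguished morphism $(\psi\circ\phi_1,\phi_2):W_1\to Z\times X'$, whose image is closed by \ref{ax:4}, and intersecting that image with the closed subset $Z\times\{z_i\}$ yields exactly $(\psi\circ\phi_1)(\phi_2^{-1}(z_i))\times\{z_i\}$ — the same device used in Lemmas \ref{lem:heckeorbitintersection2} and \ref{lem:sigmaspecialchar}. Thus $\psi(\phi^{-1}(x))$ is a finite union of subvarieties, so each of its irreducible components equals one of them; in particular $W=(\psi\circ\phi_1)(\phi_2^{-1}(z_j))$ for some $j$. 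Taking the distinguished morphisms $\psi\circ\phi_1:W_1\to Z$ (which has finite fibers, as $\psi$ does and $\phi_1$ is finite) and $\phi_2:W_1\to X'$ together with the point $z_j\in X'(K)$ then gives the representation required by the lemma, with $W_1$, $X'$, $z_j$ in place of $Y$, $X$, $x$.

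The step that needs the most care is this last upgrade: one must verify that, after passing to the morphisms furnished by \ref{ax:4}, the piece of the fiber mapping onto $W$ really is the image under a distinguished morphism of a single point-fiber of another distinguished morphism, and that such images are closed, so that speaking of ``irreducible components'' is legitimate. Closedness is false in general for images of arbitrary subvarieties under distinguished morphisms and is recovered here only through the product trick with \ref{ax:1} and \ref{ax:4}. An alternative to writing the upgrade out in full would be to quote the proof of Lemma 6.6 in \cite{BD2} and merely check that it does not spoil the finite-fiber property of $\psi$ supplied by \ref{ax:5}.
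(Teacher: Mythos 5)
Your proposal is correct and follows essentially the same route as the paper: apply \ref{ax:5} to $W$, which is (vacuously) weakly optimal for itself in $Z$, and then upgrade ``irreducible component of $\psi(\phi^{-1}(x))$'' to equality. The paper simply cites the proof of Lemma 6.6 in \cite{BD2} for that upgrade, whereas you write it out via \ref{ax:4} and the product/closedness trick and explicitly check that composing with the finite surjective $\phi_1$ preserves the finite-fiber property of $\psi$ — exactly the verification the paper leaves implicit.
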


\begin{proof}
	We apply \ref{ax:5} to $W$, which is of course weakly optimal for itself in $Z$. It follows that there exist distinguished morphisms $\psi': Y' \to Z$ and $\phi': Y' \to X'$ as well as a point $x' \in X'(K)$ such that $\psi'$ has finite fibers and $W$ is an irreducible component of $\phi'(\psi'^{-1}(x'))$. We can then follow the proof of Lemma 6.6 in \cite{BD2}.
\end{proof}

\begin{rmk}
For all results in this section where we assume that \ref{ax:5} holds, it is actually sufficient to assume that the conclusion of Lemma \ref{lem:finfibweaklyspecial} is satisfied, which of course is much weaker than \ref{ax:5} itself. For instance, in the distinguished category $\mathfrak{C}_{\mathrm{add}}$ whose objects are the powers of the additive group $\mathbb{G}_{a,K}$, \ref{ax:5} does not hold (see Section 9 of \cite{BD2}), but the conclusion of Lemma \ref{lem:finfibweaklyspecial} does.

\end{rmk}

\begin{lem}\label{lem:specialimpliespointspecial}
	Suppose that $\Sigma$ is a $\Sigma$-orbit in a distinguished category that satisfies \ref{ax:5}. If $W$ is a subvariety of a distinguished variety $Z$ that is $\Sigma_Z$-point-special, then $W$ is also $\Sigma$-special.  
\end{lem}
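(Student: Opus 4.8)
The plan is to combine the finite-fibers description of weakly special subvarieties (Lemma \ref{lem:finfibweaklyspecial}, available because \ref{ax:5} holds) with Proposition \ref{prop:heckeunderfinitecover} and the elementary functoriality of $\Sigma$-orbits under distinguished morphisms recorded right after their definition.

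Write $\Sigma = \Sigma(x_0)$ for some distinguished variety $X_0$ and some $x_0 \in X_0(K)$, so that $\Sigma_V = \Sigma_V(x_0)$ for every distinguished variety $V$. Since $W$ is $\Sigma_Z$-point-special, it is weakly special and contains a point $z \in \Sigma_Z$. First I would apply Lemma \ref{lem:finfibweaklyspecial} to obtain distinguished morphisms $\psi: Y \to Z$ and $\phi: Y \to X$ and a point $x \in X(K)$ such that $\psi$ has finite fibers and $W = \psi(\phi^{-1}(x))$. As $z \in W = \psi(\phi^{-1}(x))$, there is then a point $y \in Y(K)$ with $\phi(y) = x$ and $\psi(y) = z$.

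Next I would transport membership in the $\Sigma$-orbit from $Z$ to $Y$ and then to $X$. Since $\psi$ has finite fibers, Proposition \ref{prop:heckeunderfinitecover} gives $\psi^{-1}(\Sigma_Z) = \Sigma_Y$, and since $\psi(y) = z \in \Sigma_Z$ we get $y \in \Sigma_Y$. Applying the distinguished morphism $\phi$ and the observation that images of points of a $\Sigma$-orbit under distinguished morphisms again lie in the $\Sigma$-orbit of the target, we obtain $x = \phi(y) \in \Sigma_X$. We now have distinguished morphisms $\psi: Y \to Z$ and $\phi: Y \to X$ and a point $x \in \Sigma_X$ with $W = \psi(\phi^{-1}(x))$; as $W$ is irreducible, it is the unique irreducible component of $\psi(\phi^{-1}(x))$, so $W$ is $\Sigma$-special by Definition \ref{defn:sigmaspecial2}.

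The only essential use of \ref{ax:5} (in fact only of the weaker conclusion of Lemma \ref{lem:finfibweaklyspecial}, in line with the remark above) is to secure a presentation $W = \psi(\phi^{-1}(x))$ in which $\psi$ has finite fibers; this is precisely what lets Proposition \ref{prop:heckeunderfinitecover} pull $z \in \Sigma_Z$ back to a point $y \in \Sigma_Y$, and that pullback is the crux of the argument. Everything else is routine bookkeeping with the definitions.
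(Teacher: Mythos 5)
Your proof is correct and follows essentially the same route as the paper's: both use Lemma \ref{lem:finfibweaklyspecial} to write $W = \psi(\phi^{-1}(x))$ with $\psi$ of finite fibers, then use Proposition \ref{prop:heckeunderfinitecover} to pull the point of $\Sigma_Z$ back to a point of $\Sigma_Y$ lying over $x$, and push forward along $\phi$ to conclude $x \in \Sigma_X$. The only cosmetic difference is that you name the witness $y$ explicitly while the paper phrases it as non-emptiness of $\psi^{-1}(w) \cap \phi^{-1}(x)$.
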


\begin{proof}	
	We apply Lemma \ref{lem:finfibweaklyspecial} to find distinguished morphisms $\psi:Y\to Z$ and $\phi:Y\to X$ as well as a point $x\in X(K)$ such that $W = \psi(\phi^{-1}(x))$ and $\psi$ has finite fibers. Since $W$ is $\Sigma_Z$-point-special, there exists $w \in W(K) \cap \Sigma_Z$. By Proposition \ref{prop:heckeunderfinitecover} and the definition of a $\Sigma$-orbit, any $y$ in the non-empty set $(\psi^{-1}(w) \cap \phi^{-1}(x))(K)$ belongs to $\Sigma_Y$. It follows that $x \in \Sigma_X$ and hence $W$ is $\Sigma$-special.
\end{proof}

\begin{prop}\label{prop:permanencesigmaspecial}
	Suppose that $\Sigma$ is a $\Sigma$-orbit in a distinguished category. Let $\phi: Y \to X$ be a distinguished morphism and let $U$ and $Z$ be two distinguished varieties. Then the following hold:
	\begin{enumerate}
		\item If $W$ is a $\Sigma$-special subvariety of $Y$, then $\phi(W)$ is a $\Sigma$-special subvariety of $X$. Similarly, if $W$ is a $\Sigma_Y$-point-special subvariety of $Y$, then $\phi(W)$ is a $\Sigma_X$-point-special subvariety of $X$. 
		\item If $W$ is a $\Sigma$-special subvariety of $U$ and $W'$ is a $\Sigma$-special subvariety of $Z$, then $W \times W'$ is a $\Sigma$-special subvariety of $U \times Z$. Similarly, if $W$ is a $\Sigma_U$-point-special subvariety of $U$ and $W'$ is a $\Sigma_Z$-point-special subvariety of $Z$, then $W \times W'$ is a $\Sigma_{U \times Z}$-point-special subvariety of $U \times Z$.
		\item If $W$ is a $\Sigma$-special subvariety of $X$, every irreducible component of $\phi^{-1}(W)$ is a $\Sigma$-special subvariety of $Y$. If $\phi$ has finite fibers and $W$ is a $\Sigma_X$-point-special subvariety of $X$, every irreducible component of $\phi^{-1}(W)$ of dimension $\dim W$ is a $\Sigma_Y$-point-special subvariety of $Y$. 
	\end{enumerate}
\end{prop}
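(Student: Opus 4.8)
The plan is to reduce every assertion to the descriptions of (weakly/$\Sigma$-)special subvarieties as \emph{honest images of fibres} — supplied by Lemma \ref{lem:sigmaspecial} in the $\Sigma$-special case and by Lemma 6.6 of \cite{BD2} in the weakly special case — and then to carry the relevant $\Sigma$-orbit point through each construction using Propositions \ref{prop:heckeunderproduct} and \ref{prop:heckeunderfinitecover} together with the observation, made just after the definition of $\Sigma$-orbits, that images of points of a $\Sigma$-orbit under distinguished morphisms again lie in that $\Sigma$-orbit. The permanence statements for \emph{weakly} special subvarieties under images, products and preimages that I invoke along the way are contained in \cite{BD2} (and are in any case the $\Sigma$-data-free specializations of the arguments below). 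Write $\Sigma = \Sigma(x)$ for some $x \in X_0(K)$, renaming so as not to clash with the variety $X$ in the statement.

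For (1), if $W \subset Y$ is $\Sigma$-special, Lemma \ref{lem:sigmaspecial} gives distinguished morphisms $\psi: Y' \to Y$, $\phi': Y' \to Z'$ and a point $z' \in \Sigma_{Z'}$ with $W = \psi((\phi')^{-1}(z'))$; then $\phi(W) = (\phi \circ \psi)((\phi')^{-1}(z'))$ is irreducible, hence it is the whole image of the fibre $(\phi')^{-1}(z')$ of $\phi'$ over $z' \in \Sigma_{Z'}$ under the distinguished morphism $\phi \circ \psi$, so it is $\Sigma$-special by Definition \ref{defn:sigmaspecial2}. If $W$ is $\Sigma_Y$-point-special with $w \in W(K) \cap \Sigma_Y$, then $\phi(W)$ is weakly special (again a full image of a fibre, by Lemma 6.6 of \cite{BD2}) and contains $\phi(w) \in \Sigma_X$, so it is $\Sigma_X$-point-special. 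Assertion (2) is handled the same way: writing $W = \psi_U(\phi_U^{-1}(u))$ and $W' = \psi_Z(\phi_Z^{-1}(v))$ with $u \in \Sigma_{A_U}$, $v \in \Sigma_{A_Z}$, one gets $W \times W' = (\psi_U \times \psi_Z)((\phi_U \times \phi_Z)^{-1}((u,v)))$, which is irreducible over $K$, and $(u,v) \in \Sigma_{A_U} \times \Sigma_{A_Z} = \Sigma_{A_U \times A_Z}$ by Proposition \ref{prop:heckeunderproduct}; hence $W \times W'$ is $\Sigma$-special, and the point-special variant follows likewise by applying Proposition \ref{prop:heckeunderproduct} to the pair $(w,w')$.

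For (3), let $W \subset X$ be $\Sigma$-special and write $W = \psi_1(\phi_1^{-1}(z_1))$ with $\psi_1: Y_1 \to X$, $\phi_1: Y_1 \to Z_1$ distinguished and $z_1 \in \Sigma_{Z_1}$ (Lemma \ref{lem:sigmaspecial}). As in the proofs of Propositions \ref{prop:heckeunderfinitecover}--\ref{prop:heckeunderproduct}, I form the fibered product $Y \times_X Y_1$ (over $\phi$ and $\psi_1$) and apply \ref{ax:2} and Lemma \ref{lem:finfiba2} to obtain distinguished morphisms $\chi_i: X_i \to Y \times Y_1$ ($i = 1, \dots, n$) with finite fibres whose images cover $(Y \times_X Y_1)_{\red}$; composing with the two projections and with $\phi_1$ yields distinguished morphisms $\alpha_i: X_i \to Y$ and $\beta_i: X_i \to Z_1$, and unwinding the definition of the fibered product gives $\phi^{-1}(W) = \bigcup_{i=1}^n \alpha_i(\beta_i^{-1}(z_1))$. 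Since $z_1 \in \Sigma_{Z_1}$, every irreducible component of each $\alpha_i(\beta_i^{-1}(z_1))$ is $\Sigma$-special by Definition \ref{defn:sigmaspecial2}; and since each of these sets is contained in $\phi^{-1}(W)$, any irreducible component of $\phi^{-1}(W)$ coincides with an irreducible component of one of them, hence is $\Sigma$-special. Running the same argument with $z_1$ an arbitrary point of $Z_1(K)$ shows that the irreducible components of $\phi^{-1}(W)$ are weakly special whenever $W$ is weakly special. Finally, suppose $\phi$ has finite fibres, $W \subset X$ is $\Sigma_X$-point-special with $w \in W(K) \cap \Sigma_X$, and $V$ is an irreducible component of $\phi^{-1}(W)$ with $\dim V = \dim W$. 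By the preceding, $V$ is weakly special, so by (1) its image $\phi(V)$ is a (closed, irreducible) weakly special subvariety of $X$ contained in $W$; since $\phi$ has finite fibres, $\dim \phi(V) = \dim V = \dim W$, forcing $\phi(V) = W$. Choosing $v \in V(K)$ with $\phi(v) = w$, Proposition \ref{prop:heckeunderfinitecover} gives $v \in \phi^{-1}(\Sigma_X) = \Sigma_Y$, so $V$ is $\Sigma_Y$-point-special.

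I expect the two most delicate points to be: the component bookkeeping in the $\Sigma$-special case of (3) — one must observe that $\phi^{-1}(W)$ is literally the union of the \emph{subsets} $\alpha_i(\beta_i^{-1}(z_1))$ of itself, so that an irreducible component of the union is genuinely an irreducible component of one of the pieces rather than merely contained in it — and the identity $\phi(V) = W$ in the point-special case of (3), which uses simultaneously that $\phi$ has finite fibres (for the dimension count) and that images of weakly special subvarieties under distinguished morphisms are closed subvarieties.
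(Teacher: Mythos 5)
Your proof is correct and follows essentially the same route as the paper's: part (1) via Lemma \ref{lem:sigmaspecial} and the full-image description of weakly special subvarieties, part (2) via Proposition \ref{prop:heckeunderproduct}, and part (3) via Proposition \ref{prop:heckeunderfinitecover} together with the fact that preimages of ($\Sigma$-)special data have ($\Sigma$-)special components and that maximal-dimensional components of $\phi^{-1}(W)$ surject onto $W$. The only difference is that you unpack the content of Lemmas 6.6 and 6.7 of \cite{BD2}, which the paper simply cites, by redoing the fibered-product construction explicitly.
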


\begin{proof}
	Part (1) directly follows from Lemma \ref{lem:sigmaspecial} and Lemma 6.7 in \cite{BD2}.
	
	Part (2) is clear from Proposition \ref{prop:heckeunderproduct} and the definitions since the irreducible components of a product are precisely all possible products of an irreducible component of the first factor and an irreducible component of the second factor.
	
	Part (3) can be deduced from Proposition \ref{prop:heckeunderfinitecover} and Lemma 6.7 in \cite{BD2} by noting that the proof of Lemma 6.7 in \cite{BD2} together with Lemma \ref{lem:sigmaspecial} implies that irreducible components of pre-images of $\Sigma$-special subvarieties under distinguished morphisms are $\Sigma$-special. Furthermore, every irreducible component of $\phi^{-1}(W)$ of dimension $\dim W$ surjects onto $W$ under $\phi$.
\end{proof}

\begin{cor}\label{cor:sigmaspeccont}
	Suppose that $\Sigma$ is a $\Sigma$-orbit in a distinguished category satisfying \ref{ax:5} and let $Z$ be a distinguished variety. Let $W\subset V$ be weakly special subvarieties of $Z$ and suppose $W$ is $\Sigma$-special. Then $V$ is $\Sigma$-special.
\end{cor}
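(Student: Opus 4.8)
The plan is to reduce the claim to showing that $V$ admits a presentation $V=\psi(\phi^{-1}(x'))$ by distinguished morphisms $\psi\colon Y\to Z$ and $\phi\colon Y\to X'$ in which the base point $x'$ lies in $\Sigma_{X'}$: by Definition~\ref{defn:sigmaspecial2} this is exactly what it means for the weakly special subvariety $V$ to be $\Sigma$-special, since $V$, being irreducible and equal to $\psi(\phi^{-1}(x'))$, is the (only) irreducible component of $\psi(\phi^{-1}(x'))$. Because $V$ is weakly special and the ambient category satisfies \ref{ax:5}, Lemma~\ref{lem:finfibweaklyspecial} produces such $\psi\colon Y\to Z$, $\phi\colon Y\to X'$ and $x'\in X'(K)$ with $V=\psi(\phi^{-1}(x'))$ and, crucially, $\psi$ having finite fibers. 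So the entire proof comes down to showing $x'\in\Sigma_{X'}$.

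To do this I would transport the $\Sigma$-specialness of $W$ up into $Y$ and then push it down to $X'$ along $\phi$. First, by Proposition~\ref{prop:permanencesigmaspecial}(3) applied to $\psi\colon Y\to Z$, every irreducible component of $\psi^{-1}(W)$ is $\Sigma$-special. Next I would locate such a component inside the fibre $\phi^{-1}(x')$: since $W\subset V=\psi(\phi^{-1}(x'))$, one has $\psi(\psi^{-1}(W)\cap\phi^{-1}(x'))=W$, and because $\psi$ has finite fibers and $W$ is irreducible, some irreducible component $\widetilde W$ of $\psi^{-1}(W)\cap\phi^{-1}(x')$ dominates $W$ and hence has dimension $\dim W$; since $\psi$ has finite fibers we also have $\dim\psi^{-1}(W)\le\dim W$, so $\widetilde W$ is in fact a full irreducible component of $\psi^{-1}(W)$, and therefore $\Sigma$-special. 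Now $\widetilde W\subset\phi^{-1}(x')$ and $\widetilde W\ne\emptyset$ force $\phi(\widetilde W)=\{x'\}$, so Proposition~\ref{prop:permanencesigmaspecial}(1) applied to $\phi\colon Y\to X'$ shows that $\{x'\}$ is a $\Sigma$-special subvariety of $X'$; by Lemma~\ref{lem:sigmaspecialpoint} this means precisely that $x'\in\Sigma_{X'}$, and the reduction in the first paragraph then finishes the proof.

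The step I expect to require the most care — and which is really the crux — is this ``push-down'' to $X'$. One cannot simply argue that $V$ is $\Sigma_Z$-point-special and invoke Lemma~\ref{lem:specialimpliespointspecial}, because a $\Sigma$-special subvariety need not contain any point of the $\Sigma$-orbit (this genuinely fails for general semiabelian varieties, as Section~\ref{sec:exii} shows). The idea that makes it work is that a component of $\psi^{-1}(W)$ lying inside a single fibre $\phi^{-1}(x')$ maps to the \emph{point} $x'$, so its $\Sigma$-specialness — a property preserved by images, by Proposition~\ref{prop:permanencesigmaspecial}(1) — pins down $x'$ as a $\Sigma$-special point. A secondary, purely technical point is the bookkeeping needed to see that $\widetilde W$ is an irreducible component of $\psi^{-1}(W)$ and not merely of the intersection $\psi^{-1}(W)\cap\phi^{-1}(x')$; this is settled by the dimension inequality $\dim\psi^{-1}(W)\le\dim W$, which is exactly the place where the finite-fibre conclusion of Lemma~\ref{lem:finfibweaklyspecial} (hence \ref{ax:5}) enters the argument.
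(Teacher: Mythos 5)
Your proof is correct and follows essentially the same route as the paper's: both apply Lemma \ref{lem:finfibweaklyspecial}, locate an irreducible component of the preimage of $W$ lying inside the fibre over the base point, and push it forward via Proposition \ref{prop:permanencesigmaspecial} and Lemma \ref{lem:sigmaspecialpoint} to conclude that the base point is $\Sigma$-special. The only cosmetic difference is that the paper first isolates a common component of the preimage of $V$ and of the fibre before restricting to $W$, whereas you intersect the preimage of $W$ with the fibre directly and then check by the same finite-fibre dimension count that the resulting component is a full component of the preimage of $W$.
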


\begin{proof}By Lemma \ref{lem:finfibweaklyspecial}, as $V$ is weakly special, there are distinguished morphisms $\phi:Y\to Z$ and $\psi :Y\to X$ and a point $x\in X(K)$ such that $V=\phi(\psi^{-1}(x))$ and $\phi$ has finite fibers. Because of the latter fact, components of $\phi^{-1}(V) $ have dimension $\leq \dim V$ and clearly we must have $\psi^{-1}(x)\subset \phi^{-1}(V) $. Therefore, also the components of $\psi^{-1}(x)$ have dimension $\leq \dim V$ and there must be one of dimension $\dim V$. This means that  $\phi^{-1}(V) $ and $\psi^{-1}(x)$  have a common component of dimension $\dim V$, which we call $V'$. Note that $\phi(V')=V$ because $\phi(V')$ is a weakly special subvariety of the same dimension as $V$.
	
We pick a component $W'$ of $\phi|_{V'}^{-1}(W)$ such that $\phi(W') = W$. This is a component of $\phi^{-1}(W)$ because $\phi $ has finite fibers. Then $W'$ is $\Sigma$-special and the same holds for $\{x\}$, by Proposition \ref{prop:permanencesigmaspecial}. Another application of the same proposition gives the desired claim.
\end{proof}

\section{Examples II}\label{sec:exii}

The main objective of this section is to give a description of $\Sigma$-special and $\Sigma$-point-special subvarieties in some specific distinguished categories.

\subsection{Semiabelian varieties}

Let $G$ be a semiabelian variety. We want to see how the concepts of a $\Sigma$-special subvariety and of a $\Sigma_G$-point-special subvariety of $G$ are related in this context. Since the distinguished category of semiabelian varieties satisfies \ref{ax:5} by Theorem 9.3 in \cite{BD2}, we know already that a $\Sigma_G$-point-special subvariety of $G$ is $\Sigma$-special. 

We call a semiabelian variety \emph{almost-split} if it is isogenous to a split semiabelian variety, i.e., to a semiabelian variety of the form $A \times \mathbb{G}^n_{m,K}$ for some abelian variety $A$ over $K$ and some non-negative integer $n$. One can show that the full subcategory of almost-split semiabelian varieties within the distinguished category of semiabelian varieties is also distinguished.

\begin{prop}\label{prop:almostsplit}
	In the distinguished category of almost-split semiabelian varieties, the following holds:
	let $\Sigma$ be the $\Sigma$-orbit of $x\in G_0(K)$. Then, a $\Sigma$-special subvariety of a distinguished variety $G$ contains a $\Sigma$-special point and is therefore $\Sigma_G$-point-special.
\end{prop}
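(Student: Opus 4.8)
The plan is to combine the characterization of $\Sigma$-special subvarieties in Lemma~\ref{lem:sigmaspecialchar} with the structure theory of almost-split semiabelian varieties, and then to exhibit an explicit $\Sigma$-special point of $W$ using Lemma~\ref{lem:heckeorbitintersection2}. Since being $\Sigma$-special depends only on $\Sigma = \Sigma(x)$, I would first replace $G_0$ and $x$ as in the discussion at the beginning of Subsection~\ref{subsec:semiab1} (which is justified by Proposition~\ref{prop:heckeunderfinitecover2}) so that $x$ lies in no proper special subvariety of $G_0$; this does not lead out of the almost-split category, since a sub-semiabelian variety of an almost-split semiabelian variety is again almost-split. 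By Lemma~\ref{lem:sigmaspecialchar}, $W \times \{x\}$ is an irreducible component of $(G \times \{x\}) \cap S$ for some special subvariety $S = (s,s_0) + \bar H$ of $G \times G_0$, with $\bar H \subset G \times G_0$ a semiabelian subvariety and $(s,s_0)$ a torsion point. The image of $S$ under the projection $\pi_{G_0}\colon G \times G_0 \to G_0$ is a special subvariety of $G_0$ containing $x$, hence equals $G_0$, so $\bar H$ surjects onto $G_0$; putting $H'' = (\bar H \cap (G \times \{0\}))^{\circ}$, viewed as a semiabelian subvariety of $G$, the irreducible components of $(G \times \{x\}) \cap S$ are the cosets of $H'' \times \{0\}$, so $W = s + g_0 + H''$ for some $g_0 \in G(K)$ with $(g_0, x - s_0) \in \bar H$.

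The structural heart of the argument is that $\bar H$, being a sub-semiabelian variety of the almost-split semiabelian variety $G \times G_0$, is itself almost-split, so the surjection $\pi_{G_0}|_{\bar H}\colon \bar H \to G_0$ has a section up to isogeny: there are a homomorphism $\sigma\colon G_0 \to \bar H$ of algebraic groups and an integer $N \geq 1$ with $\pi_{G_0} \circ \sigma = [N]_{G_0}$. Writing $\sigma(y) = (\chi(y), Ny)$ with $\chi\colon G_0 \to G$ a homomorphism, a dimension count (using $\dim \bar H = \dim G_0 + \dim H''$, $\dim \sigma(G_0) = \dim G_0$, and $\sigma(G_0) \cap (H'' \times \{0\})$ finite) gives $\bar H = \sigma(G_0) + (H'' \times \{0\})$.

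To produce the point, I would use that $G_0(K)$ is divisible to pick $y_1 \in G_0(K)$ with $N y_1 = x - s_0$. Then $\sigma(y_1) = (\chi(y_1), x - s_0) \in \bar H$, and subtracting this from $(g_0, x - s_0) \in \bar H$ gives $(g_0 - \chi(y_1), 0) \in \bar H \cap (G \times \{0\})$, so by the identity above $g_0 - \chi(y_1) = \chi(\tau) + h$ for some $\tau \in G_0[N]$ and $h \in H''$. Set $y = y_1 + \tau$ (so $Ny = x - s_0$) and $g = s + \chi(y)$. On one hand $(g,x) = (s,s_0) + \sigma(y)$ lies in the special subvariety $S' = (s,s_0) + \sigma(G_0)$ of $G \times G_0$, and $S' \cap (G \times \{x\})$ is finite (it lies over the single point $x - s_0$ of $G_0$, over which $\pi_{G_0}|_{\sigma(G_0)}$ has finite fibres), so $\{(g,x)\}$ is an irreducible component of it and hence $g \in \Sigma_G$ by Lemma~\ref{lem:heckeorbitintersection2}. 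On the other hand $g - (s + g_0) = \chi(y_1) + \chi(\tau) - g_0 = -h \in H''$, so $g \in W$. Therefore $W$ contains the $\Sigma$-special point $g$ and, being weakly special, is $\Sigma_G$-point-special.

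The step I expect to be the main obstacle is the last one, and more precisely the torsion bookkeeping in it: the naive candidate $s + \chi(y_1)$ lies on $S$ but in general on the wrong irreducible component of the (reducible) set $(G \times \{x\}) \cap S$, and the identity $\bar H = \sigma(G_0) + (H'' \times \{0\})$ — which is exactly where almost-splitness enters — is what allows one to correct it by an $N$-torsion element so as to land on $W$. Without almost-splitness the section $\sigma$ need not exist even up to isogeny, which is the reason the conclusion of Proposition~\ref{prop:almostsplit} fails for general semiabelian varieties.
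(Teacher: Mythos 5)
Your argument is correct and is essentially the paper's proof in a different guise: after the same reduction to $x$ generating $G_0$, both proofs use almost-splitness together with Poincar\'e reducibility to split the surjection onto $G_0$ up to isogeny (the paper splits $\phi\colon Y\to G_0$ via a quasi-section $\phi'$ with $\phi\circ\phi'=[d]$; you split $\pi_{G_0}|_{\bar H}$ via $\sigma$ with $\pi_{G_0}\circ\sigma=[N]$, which amounts to the same thing after replacing $Y$ by its image in $G\times G_0$), then divide $x$ and correct by torsion to land on the given component. The only differences are presentational — you phrase everything through the graph characterizations of Lemmas \ref{lem:sigmaspecialchar} and \ref{lem:heckeorbitintersection2} with explicit coset bookkeeping, where the paper invokes Proposition \ref{prop:permanencesigmaspecial} and the fact that the components of $\phi^{-1}(x)$ are torsion translates of one another.
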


\begin{proof}
	We can and will assume that $x$ does not belong to any proper special subvariety of $G_0$ as done in Subsection \ref{subsec:semiab1}.
	
	Let $W$ be a $\Sigma$-special subvariety of $G$. Then, $W$ is a component of $\psi(\phi^{-1}(x))$ for some distinguished morphisms $\psi:Y\to G$ and $\phi:Y\to G_0$. Since $x$ does not belong to any proper special subvariety of $G_0$, we have that $\phi $ is surjective. We may assume as in Subsection \ref{subsec:semiab1} that $\phi$ is a homomorphism of algebraic groups. Since $Y$ is almost-split, there exist an abelian variety $B$ over $K$, an integer $n \geq 0$, and an isogeny $\omega: B \times \mathbb{G}^n_{m,K}\to Y$. The kernel of $\phi \circ \omega$ is of the form $H \times H'$ for algebraic subgroups $H$ of $B$ and $H'$ of $\mathbb{G}^n_{m,K}$ respectively. It then follows from Poincar\'e's complete reducibility theorem (see, e.g., \citep[Theorem 1, p.~173]{Mumford}) that there is a semiabelian subvariety of $B \times \mathbb{G}^n_{m,K}$ such that $\phi \circ \omega$ restricts to an isogeny from this semiabelian subvariety to $G_0$. Therefore, there also exists a semiabelian subvariety of $Y$ such that $\phi$ restricts to an isogeny from this semiabelian subvariety to $G_0$. By Lemma 7.3/5 on p. 180 of \cite{Neronmodels}, we can conclude that there exists a homomorphism of algebraic groups $\phi':G_0\to Y$ such that $\chi=\phi\circ \phi'$ is the multiplication by some positive integer $d$. Then, $\chi^{-1}(x)$ is a finite set of points which lie in $\Sigma_{G_0} $ by Proposition \ref{prop:permanencesigmaspecial} and some component of $\phi^{-1}(x)$ must contain a point of $\phi'(\chi^{-1}(x))$ which must be a point of $\Sigma_Y$ by the same Proposition \ref{prop:permanencesigmaspecial}. But the components of $\phi^{-1}(x)$ are all translates of each other by torsion points of $Y$ and since translating by a torsion point is a distinguished morphism, each of them contains a point of $\Sigma_Y$ by Proposition \ref{prop:permanencesigmaspecial}. Applying Proposition \ref{prop:permanencesigmaspecial} once more, we deduce that any component of $\psi(\phi^{-1}(x))$ contains a point of $\Sigma_G$.
\end{proof}

\begin{rmk}
	We now see that, if the almost-split condition is dropped, meaning we consider the distinguished category of semiabelian varieties, the statement of the above proposition cannot hold.
	
	Let $G_0$ be an arbitrary semiabelian variety and let $x_0 \in G_0(K)$. Let $E$ be an elliptic curve with $\mathrm{End}(E) \simeq \mathbb{Z}$ and let $y_0 \in E(K)$ be a non-torsion point.
	
	For any abelian variety $A$, we denote its dual abelian variety by $\widehat{A}$. The extensions of $A$ by $\mathbb{G}_{m,K}^n$ over $K$ are classified by $\widehat{A}^n(K)$, see Section 1.1 of \cite{Lars}. Let $G$ be an extension of $E$ by $\mathbb{G}_{m,K}$ that corresponds to $g \in \widehat{E}(K)$.
	
	We have the canonical projection $\pi:G\to E$ and $\pi^{-1}(y_0)$ is a $\Sigma(x_0,y_0)$-special subvariety of $G$. We are going to see that, for ``most choices'' of $g$,  $\pi^{-1}(y_0)$ is not $\Sigma_G(x_0,y_0)$-point-special.
	
	Suppose that $\phi: G_0' \to G_0 \times E$ is a homomorphism of semiabelian varieties with finite kernel. Let $A_0$ and $A_0'$ denote the abelian quotients of $G_0$ and $G_0'$ respectively and let $g_0 \in \widehat{A_0}^m(K)$ and $g_0' \in \widehat{A_0'}^{m'}(K)$ denote the elements corresponding to $G_0$ and $G_0'$ respectively. It follows from Section 1 of \cite{Lars} that $g_0' \in (\Gamma'^{\mathrm{div}})^{m'}$ where $\Gamma' = \left\langle \mathrm{Hom}(\widehat{A_0 \times E},\widehat{A_0'}) \cdot (g_0,0_{\widehat{E}}) \right\rangle$ is a finitely generated subgroup of $\widehat{A_0'}(K)$. Recall that $\Gamma'^{\mathrm{div}}$ indicates the division group of $\Gamma'$ in $\widehat{A_0'}(K)$. Suppose now that $\psi: G_0' \to G$ is an arbitrary homomorphism of semiabelian varieties. It follows again from Section 1 of \cite{Lars} that either $\pi \circ \psi$ is trivial or $\chi(g) \in \Gamma'^{\mathrm{div}}$ for some homomorphism $\chi: \widehat{E} \to \widehat{A_0'}$ with finite kernel. Since $\mathrm{End}(\widehat{E}) \simeq \mathrm{End}(E) \simeq \mathbb{Z}$, we find in the latter case that $g \in \Gamma^{\mathrm{div}}$ where $\Gamma = \left\langle \mathrm{Hom}(\widehat{A_0 \times E},\widehat{E}) \cdot (g_0,0_{\widehat{E}}) \right\rangle$. 
	
	Hence, if we choose $G$ such that $g \not\in \Gamma^{\mathrm{div}}$ (this can be done by work of Frey and Jarden \cite{FreyJarden}), then every $\Sigma(x_0,y_0)$-special point of $G$ must project to a torsion point of $E$ under $\pi$. It follows that $\pi^{-1}(y_0)$ cannot be $\Sigma_G(x_0,y_0)$-point-special.
\end{rmk}

We conclude this subsection by showing that, even if a subvariety $V$ of $G$ is $\Sigma$-special and not $\Sigma_G$-point-special, it is possible to make $\Sigma$ a bit larger (independently of $V$) so that $V$ contains a point of $\Sigma_G$. On the other hand, the above remark shows that it is not possible to do this uniformly in $G$ in general.

\begin{prop} In the distinguished category of semiabelian varieties, the following holds:
	let $G_0$ be a semiabelian variety, let $\Sigma$ be the $\Sigma$-orbit of $x\in G_0(K)$, and let $G$ be another fixed semiabelian variety. There exist $\gamma_1', \hdots, \gamma_n' \in G(K)$ such that, for $\Sigma' = \Sigma(x,\gamma_1',\hdots,\gamma_n')$, we have that every $\Sigma'$-special subvariety of $G$ is $\Sigma'_G$-point-special.
\end{prop}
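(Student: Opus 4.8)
The plan is to first describe the $\Sigma$-special subvarieties of a semiabelian variety explicitly, then recast the statement as a purely group-theoretic condition on $\Sigma$-orbits in $G$ and its quotients, and finally construct $\Sigma'$ by adjoining to $x$ finitely many points of $G$ (and of its toric part) chosen to ``kill'' the relevant obstructions.

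First I would prove a \emph{structure lemma}: for any $\Sigma$-orbit $\Sigma$, a subvariety $W$ of a semiabelian variety $G$ is $\Sigma$-special if and only if $W=\gamma+H$ for a semiabelian subvariety $H\subseteq G$ and a point $\gamma\in G(K)$ with $q_H(\gamma)\in\Sigma_{G/H}$, where $q_H:G\to G/H$ is the quotient homomorphism. For ``$\Rightarrow$'': $W$ is weakly special, hence a coset $\gamma+H$, and $q_H(W)=\{q_H(\gamma)\}$ is $\Sigma$-special by Proposition \ref{prop:permanencesigmaspecial}(1), so $q_H(\gamma)$ is a $\Sigma$-special point, i.e.\ lies in $\Sigma_{G/H}$. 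For ``$\Leftarrow$'': $\{q_H(\gamma)\}$ is then a $\Sigma$-special subvariety of $G/H$, so by Proposition \ref{prop:permanencesigmaspecial}(3) every irreducible component of $q_H^{-1}(q_H(\gamma))=\gamma+H$ is $\Sigma$-special, and $\gamma+H$ is irreducible.

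Applying the lemma to $\Sigma=\Sigma(x)$ and to a prospective $\Sigma'=\Sigma(x')$, the condition ``every $\Sigma$-special subvariety of $G$ is $\Sigma'$-special'' becomes $\Sigma_{G/H}(x)\subseteq\Sigma_{G/H}(x')$ for every semiabelian subvariety $H\subseteq G$, and ``every $\Sigma'$-special subvariety of $G$ is $\Sigma'_G$-point-special'' becomes $\Sigma_{G/H}(x')=q_H(\Sigma_G(x'))$ for every such $H$ (the inclusion $\supseteq$ being automatic from the functoriality of $\Sigma$-orbits, and the point being that $\gamma+H$ meets $\Sigma_G(x')$ exactly when $q_H(\gamma)\in q_H(\Sigma_G(x'))$). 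Since $\Sigma_Z(x)\subseteq\Sigma_Z((x,a))$ for every distinguished variety $Z$ and every point $a$ (carry the extra factor along in the definition of $\Sigma$-orbit), the first condition holds whenever $x'$ has the form $(x,\ast)$. So it suffices to produce a tuple $x'=(x,g_1,\dots,g_m)$, with the $g_i$ in suitable semiabelian varieties, such that $q_H(\Sigma_G(x'))=\Sigma_{G/H}(x')$ for every semiabelian subvariety $H$ of $G$. For the construction I would proceed as in Subsection \ref{subsec:semiab1}: assume $x$ lies in no proper special subvariety of $G_0$, so $\Sigma_{G/H}(x)$ is the division group of $\{\beta(x):\beta\in\Hom(G_0,G/H)\}$ and the only obstruction to $q_H(\Sigma_G(x))=\Sigma_{G/H}(x)$ is that a homomorphism $\beta:G_0\to G/H$ need not lift to a homomorphism $G_0\to G$; this obstruction lives in $\mathrm{Ext}^1(G_0,H)$ and is a pullback/pushforward, along homomorphisms, of the single extension class of $G$ as an extension of its abelian part $A_G$ by its toric part $T_G\cong\mathbb{G}_{m,K}^{s}$ (which lies in $\mathrm{Ext}^1(A_G,T_G)\cong\widehat{A_G}(K)^{s}$). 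As all relevant $\Hom$-modules of (semi)abelian varieties are finitely generated, the collection of these obstruction classes, over \emph{all} semiabelian subvarieties $H$, spans a subgroup of finite rank; hence finitely many points $p_1,\dots,p_k\in G(K)$ suffice so that, for every $H$ and $\beta$, some nonzero multiple of $\beta(x)$ equals $q_H(p)$ for a $p$ in the subgroup of $G(K)$ generated by $\Sigma_G(x)$ and the $p_j$. Adjoining the $p_j$ creates new potential obstructions from homomorphisms $G\to G/H$; but each of these coincides up to isogeny with $q_H$ composed with an endomorphism of $G$, modulo a homomorphism factoring through the quotient torus $T_{G/H}$ (using that $\mathrm{Ext}^1$ of an abelian variety by an abelian variety is torsion), and this residual ambiguity is governed by the finitely generated groups $\Hom(G,\mathbb{G}_{m,K})$ and $\Hom(\mathbb{G}_{m,K}^{s},T_G)$ (the latter independent of $H$); so one need only further adjoin finitely many points of $T_G(K)\subseteq G(K)$, and adjoining torus points creates no new obstructions, since $\mathrm{Ext}^1$ of a torus by a semiabelian variety is torsion. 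This bounded saturation produces a finitely generated $\Lambda\subseteq G(K)$ with $\Sigma_G(x)\subseteq\Lambda$; taking $x'=(x,g_1,\dots,g_m)$ with $g_1,\dots,g_m$ a generating tuple of $\Lambda$, one checks $\Sigma_G(x')=\Lambda^{\mathrm{div}}$ and $\Sigma_{G/H}(x')=q_H(\Lambda^{\mathrm{div}})$ for every $H$, as required.

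The main obstacle is precisely this last step: controlling the obstruction classes $\beta^{\ast}[G_{/H}]$ \emph{uniformly over the infinitely many semiabelian subvarieties $H$ of $G$}, and organizing the saturation so that it closes up after adjoining only finitely many points. A subsidiary technical point is to ensure that adjoining the new coordinates to $x'$ does not produce homomorphisms out of the (reduced) span of $x'$ carrying uncontrolled obstructions; this should be handled by keeping track of the abelian and toric parts of that span and using that the only dangerous new homomorphisms land in quotient tori, where $\mathrm{Ext}^1$ vanishes up to torsion. (Finally, the companion statement that this cannot be done uniformly in $G$ is exactly the content of the preceding remark.)
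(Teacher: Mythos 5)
Your structure lemma ($W$ is $\Sigma$-special iff $W=\gamma+H$ with $q_H(\gamma)\in\Sigma_{G/H}$) and the resulting reformulation of the proposition as ``$\Sigma'_{G/H}\subseteq q_H(\Sigma'_G)$ for every semiabelian subvariety $H\subseteq G$'' are correct, and they match what the paper's argument implicitly relies on. The gap is exactly where you flag it: the construction of $x'$ and the verification that the saturation closes up. Two things are missing. First, your claim that the lifting obstructions $\beta^{\ast}[G]\in\mathrm{Ext}^1(G_0,H)$, taken over \emph{all} $H$ and all $\beta\in\Hom(G_0,G/H)$, ``span a subgroup of finite rank'' so that finitely many $p_j$ suffice, is asserted rather than proved; the classes live in different groups and no common finitely generated receptacle is exhibited. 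Second, and more seriously, after adjoining $p_1,\dots,p_k$ the relevant Hom-group becomes $\Hom(\langle x'\rangle,G/H)$ for the special closure $\langle x'\rangle\subseteq G_0\times G^k$, whose own extension class (of its abelian part by its toric part) depends on the chosen lifts $p_j$ and can a priori produce new non-torsion obstructions; your proposed fix (``the only dangerous new homomorphisms land in quotient tori'') is not justified, so it is not clear the process terminates after finitely many rounds.

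The paper avoids both problems by a different organization. It first proves, by reduction to the almost-split case (Proposition \ref{prop:almostsplit}), that for \emph{any} $\Sigma$-orbit, $\Sigma$-special subvarieties of the maximal abelian quotient $A=G/T$ and of the toric part $T$ are automatically point-special; this disposes of the ``toric direction'' for every $H$ at once, with no points of $T_G(K)$ ever adjoined. It then adjoins only lifts $\gamma_i'\in G(K)$ of the finitely many generators $\gamma_i=\psi_i(x)$ of $\Sigma_A$ (where $\psi_i$ generate $\Hom(G_0,A)$, all factoring through $A_0$), setting $x'=(x,\gamma_1',\dots,\gamma_n')$. The uniformity over $H$ is then obtained by projecting a $\Sigma'$-special $W$ to $A$, finding a point of $\Sigma'_A$ on $\pi_A(W)$, translating by a lift in $\Sigma'_G$ to reduce to the toric part, and invoking the first step. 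Crucially, the counterpart of your termination problem is handled by an explicit computation: the maximal abelian quotient of $\langle x'\rangle$ is the graph $(\id_{A_0},\phi_1,\dots,\phi_n)(A_0)$, whence $\Sigma'_A=\Sigma_A$, i.e.\ the adjoined lifts do not enlarge the orbit on $A$ and no further saturation is needed. To complete your proof you would need to supply these two ingredients (or equivalents), so as written the argument is incomplete.
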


\begin{proof}
	Let $\pi_A: G \to A$ denote the maximal abelian quotient of $G$ and set $T := \ker \pi_A$.
	
	We first claim that, for an arbitrary $\Sigma$-orbit $\widetilde{\Sigma}$, every $\widetilde{\Sigma}$-special subvariety of $A$ is $\widetilde{\Sigma}_A$-point-special and every $\widetilde{\Sigma}$-special subvariety of $T$ is $\widetilde{\Sigma}_T$-point-special. Indeed, suppose that $\widetilde{\Sigma}={\Sigma}(\widetilde{x})$ for some $\widetilde{x} \in \widetilde{G}_0(K)$ and that $W \subset T$ is $\widetilde{\Sigma}$-special. Because of Proposition \ref{prop:heckeunderfinitecover2}, we can and will assume without loss of generality that $\widetilde{x}$ does not belong to any proper special subvariety of $\widetilde{G}_0$.  By Lemma \ref{lem:sigmaspecialchar}, there exist distinguished morphisms $\phi: Y\to \widetilde{G}_0$ and $\psi: Y \to T$ such that $W$ is an irreducible component of $\psi(\phi^{-1}(\widetilde{x}))$. As $\widetilde{x}$ does not belong to any proper special subvariety of $\widetilde{G}_0$, the morphism $\phi$ is surjective. We can and will assume that $\phi$ is a homomorphism of algebraic groups. Let $\pi': Y \to T_Y$ denote the maximal torus quotient of $Y$, which exists because of the Rosenlicht decomposition of $Y$ (see Proposition 3.1 in \cite{Brion09}), and let $\widetilde{\pi}: \widetilde{G}_0 \to \widetilde{T}_0$ denote the maximal torus quotient of $\widetilde{G}_0$. By maximality, there exist a distinguished morphism $\chi: T_Y \to T$ and a homomorphism of algebraic groups $\omega: T_Y \to \widetilde{T}_0$ such that $\psi = \chi \circ \pi'$ and $\widetilde{\pi} \circ \phi = \omega \circ \pi'$. Furthermore, the surjectivity of $\phi$ implies that $Y/(\ker \phi + \ker \pi')$ is a torus quotient of $\widetilde{G}_0$ and the maximality of $\widetilde{T}_0$ tells us that $Y/(\ker \phi + \ker \pi')$ is a quotient of $ \widetilde{T}_0$. Thus,  $  \ker(\widetilde{\pi} \circ \phi) \subset \ker \phi + \ker \pi'$. The other inclusion is trivial and we have $\ker \phi + \ker \pi' = \ker(\widetilde{\pi} \circ \phi) = \ker(\omega \circ \pi')$. We deduce that
	\begin{align*}
		\psi(\phi^{-1}(\widetilde{x})) = \chi(\pi'(\phi^{-1}(\widetilde{x}))) = \chi(\pi'(\ker \pi' + \phi^{-1}(\widetilde{x}))) \\
		= \chi(\pi'((\omega \circ \pi')^{-1}(\widetilde{\pi}(x)))) = \chi(\omega^{-1}(\widetilde{\pi}(x))).
	\end{align*}
	It now follows as in the proof of Proposition \ref{prop:almostsplit} that $W$ is $\widetilde{\Sigma}_T$-point-special. A completely analogous argument shows that every $\widetilde{\Sigma}$-special subvariety of $A$ is $\widetilde{\Sigma}_A$-point-special.
	
	Let now $\pi_0:G_0\to A_0$ be the maximal abelian quotient of $G_0$ and $y=\pi_0(x)$. Because of Proposition \ref{prop:heckeunderfinitecover2}, we can and will assume without loss of generality that $x$ does not belong to any proper special subvariety of $G_0$.
	We have seen in Subsection \ref{subsec:semiab1} that there are generators $\psi_1, \dots ,\psi_n$ of $\Hom(G_0,A)$ such that $\Sigma_A$ is equal to $\langle \gamma_1,\dots, \gamma_n \rangle^{\mathrm{div}}$ where $\gamma_i=\psi_i(x) \in A(K)$ for all $i=1,\dots, n$. As all elements of $\Hom(G_0,A)$ factor through $A_0$, there are $\phi_1, \dots ,\phi_n\in \Hom(A_0,A)$ such that $\psi_i=\phi_i\circ \pi_0$ for all $i=1,\dots, n$. Therefore, $\gamma_i = \phi_i(y)$ for all $i=1,\dots, n$.

	 For each $i = 1,\hdots,n$, fix $\gamma'_i \in G(K)$ such that $\pi_A(\gamma'_i) = \gamma_i$. We claim that the conclusion of the proposition holds with $\Sigma'=\Sigma(x') $ for $x'=(x,\gamma'_1,\hdots,\gamma'_n) \in (G_0 \times G^n)(K)$. Note that every $\Sigma$-special subvariety is $\Sigma'$-special by Lemma \ref{lem:sigmaspecialchar}; in particular, we have that $\Sigma_Z \subset \Sigma'_Z$ for every semiabelian variety $Z$.
	 We have just proved our first claim.
	 
	 Now we want to show that $\Sigma'_A\subset \Sigma_A$. Let $H$ be the semiabelian subvariety of $G_0 \times G^n$ such that $\langle \{(x,\gamma'_1,\hdots,\gamma'_n) \}\rangle = (t_0,t_1,\hdots,t_n) + H \subset G_0 \times G^n$ (we recall that $\langle \cdot \rangle$ denotes the special closure) for some torsion points $t_1,\hdots,t_n$ of $G$ and a torsion point $t_0$ of $G_0$. Let $B \subset A_0 \times A^n$ denote the maximal abelian quotient of $H$. It follows that 
	 	\[B = \langle \{(y-\pi_0(t_0),\phi_1(y)-\pi_A(t_1),\hdots,\phi_n(y)-\pi_A(t_n))\} \rangle.\]
 	Since $G_0 = \langle \{x\} \rangle$ and so $A_0 = \langle \{y\} \rangle$, it follows that
 	\[(z-\pi_0(t_0),\phi_1(z)-\pi_A(t_1),\hdots,\phi_n(z)-\pi_A(t_n)) \in B(K) \quad \forall z \in A_0(K).\]
 	Since $B$ is an abelian subvariety of $A_0\times A^n$, it follows that $(\id_{A_0},\phi_1,\hdots,\phi_n)(A_0) \subset B$. But $(y,\phi_1(y),\hdots,\phi_n(y)) \in (\id_{A_0},\phi_1,\hdots,\phi_n)(A_0)(K)$ and so $B = (\id_{A_0},\phi_1,\hdots,\phi_n)(A_0)$.
	
 	Combining this with the fact that $A_0$ and $A$ are the maximal abelian quotients of $G_0$ and $G$ respectively as well as with arguments from Subsection \ref{subsec:semiab1}, we get
	\begin{align*}
		\Sigma'_A&=\langle \Hom(H, A)\cdot (x,\gamma_1'-t_1, \dots, \gamma_n'-t_n ) \rangle^{\mathrm{div}}\\ &=\langle \Hom(B, A)\cdot (y,\phi_1(y)-\pi_A(t_1), \dots,\phi_n(y)-\pi_A(t_n) ) \rangle^{\mathrm{div}}\\
		& \subset
		\langle \Hom(A_0, A)\cdot y \rangle^{\mathrm{div}}\subset \langle \Hom(G_0, A)\cdot x\rangle^{\mathrm{div}}
		=\Sigma_A.
	\end{align*}

	  Now, for every $\Sigma'$-special subvariety $W$ of $G$, the subvariety $\pi_A(W)$ of $A$ is $\Sigma'_A$-point-special by the above.
	   Hence, by construction of $\Sigma'$, there is a point $\gamma'$ in $\Sigma'_G$ such that $(W-\gamma') \cap T \neq \emptyset$. By Proposition \ref{prop:permanencesigmaspecial}(1)-(2), the subvarieties $W \times \{\gamma'\} \subset G^2$ and $W-\gamma' \subset G$ are both $\Sigma'$-special. Furthermore, every component of $(W-\gamma') \cap T$, regarded as a subvariety of $T$, is $\Sigma'$-special by Proposition \ref{prop:permanencesigmaspecial}(3) and therefore $\Sigma'_T$-point-special by the above. As $\Sigma'_T \subset \Sigma'_G$ and $\Sigma'_G + \Sigma'_G \subset \Sigma'_G$ by Proposition \ref{prop:permanencesigmaspecial}, we conclude that $W$ contains a point of $\Sigma'_G$.
\end{proof}

\subsection{Connected pure Shimura varieties}

In the following Proposition \ref{prop:weaklyspecialshimura} and its proof, we will use connected pure Shimura data and varieties and Shimura morphisms, submersions, immersions, coverings, and embeddings as well as quotient Shimura morphisms, quotient Shimura data, and Shimura subdata as defined in Definitions 3.1, 3.2, and 3.4 in \cite{BD2}. Given a connected pure Shimura datum $(P,X^+)$, we will also use the notation $P(\mathbb{Q})_+$ as introduced in Section 3 in \cite{BD2}.

\begin{prop}\label{prop:weaklyspecialshimura}
	Let $S$ be a connected pure Shimura variety over $K$ and let $W \subset S$ be a weakly special subvariety. Then there exist connected pure Shimura varieties $S'$, $S_1$, and $S_2$ as well as a Shimura embedding $i: S' \to S$, a Shimura covering $p: S' \to S_1 \times S_2$, and a point $w \in S_2(K)$ such that $W$ is an irreducible component of $i(p^{-1}(S_1 \times \{w\}))$.
\end{prop}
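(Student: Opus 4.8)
The plan is to translate the abstract definition of a weakly special subvariety into the classical group-theoretic description of weakly special subvarieties of connected Shimura varieties, and then to repackage that description in the language of Shimura subdata, submersions, coverings, and embeddings. Concretely, I would first unwind the definition: there are distinguished, hence Shimura, morphisms $\phi\colon Y\to S$ and $\psi\colon Y\to X'$ and a point $x'\in X'(K)$ with $W$ an irreducible component of $\phi(\psi^{-1}(x'))$. Forming $(\phi,\psi)\colon Y\to S\times X'$, replacing $X'$ by the image of the second projection restricted to the special (hence Shimura) subvariety $(\phi,\psi)(Y)$, adjusting congruence subgroups via Lemma~2.2 of \cite{OrrThesis}, and passing to the normalisation of the special subvariety $\phi(Y)$ (which is smooth, so $\phi$ factors through it), one reduces, by a standard minimisation argument as in the proof of Lemma~6.6 in \cite{BD2} together with the factorisation of Shimura morphisms into submersions followed by immersions, to the following situation: there is a connected Shimura subdatum $(Q,X_Q^+)$ of $(P,X^+)$, with attached Shimura embedding $i\colon S_Q\to S$, and a quotient Shimura morphism $q\colon(Q,X_Q^+)\to(Q',X_{Q'}^+)$, such that $W$ is an irreducible component of the $i$-image of the image in $S_Q$ of a fibre of $q$.

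The heart of the proof is then a group-theoretic analysis of the kernel $N$ of $Q\twoheadrightarrow Q'$. Since $Q$ is reductive, $N$ is a reductive normal $\mathbb{Q}$-subgroup, so $N=N_1\cdot N_{\mathrm c}\cdot Z$ with $N_1$ (resp. $N_{\mathrm c}$) the product of the non-compact (resp. compact) $\mathbb{Q}$-simple factors of $N^{\mathrm{der}}$ and $Z\subset Z(Q)$ central. As $N_1$ is a product of full simple factors of $Q^{\mathrm{der}}$, the adjoint group splits as an honest product $Q^{\mathrm{ad}}=Q_1\times Q_2$ with $Q_1:=N_1^{\mathrm{ad}}$ and $Q_2$ the product of the remaining simple adjoint factors, and this induces a product decomposition of connected Shimura data $(Q^{\mathrm{ad}},X_Q^{\mathrm{ad},+})=(Q_1,X_1^+)\times(Q_2,X_2^+)$; moreover the adjoint quotient $X_Q^+\to X_Q^{\mathrm{ad},+}$ is an isomorphism of (simply connected) hermitian symmetric domains, so I identify $X_Q^+=X_1^+\times X_2^+$. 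By the structure of quotient Shimura morphisms, the fibres of $X_Q^+\to X_{Q'}^+$ are exactly the slices $X_1^+\times\{\widetilde w\}$ for $\widetilde w\in X_2^+$; choosing $\widetilde w$ to correspond to the fibre of $q$ at hand, $W$ becomes an irreducible component of the $i$-image of the image in $S_Q$ of $X_1^+\times\{\widetilde w\}$.

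To conclude, put $S':=S_Q$ and let $S_j$ ($j=1,2$) be the connected Shimura variety attached to $(Q_j,X_j^+,\Gamma_j)$, where $\Gamma_j$ is a congruence subgroup of $Q_j(\mathbb{Q})_+$ chosen so that $\Gamma_1\times\Gamma_2$ contains with finite index the image of the level group of $S'$ under $Q(\mathbb{Q})_+\to Q^{\mathrm{ad}}(\mathbb{Q})_+=Q_1(\mathbb{Q})_+\times Q_2(\mathbb{Q})_+$; this is possible because that image is an arithmetic subgroup of $Q_1\times Q_2$ which, $Q_1$ and $Q_2$ being adjoint semisimple without common simple factor, is commensurable with a product, and Lemma~2.2 of \cite{OrrThesis} permits further adjustment. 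The morphism of Shimura data $(Q,X_Q^+)\to(Q^{\mathrm{ad}},X_Q^{\mathrm{ad},+})=(Q_1,X_1^+)\times(Q_2,X_2^+)$ then induces a Shimura covering $p\colon S'\to S_1\times S_2$: it is a composition of quotient Shimura morphisms and is finite since the underlying map of domains is an isomorphism. With $w\in S_2(K)$ the image of $\widetilde w$, the subvariety $p^{-1}(S_1\times\{w\})$ is the image in $S'$ of $X_1^+\times(\Gamma_2\cdot\widetilde w)$, an irreducible component of which is the image of $X_1^+\times\{\widetilde w\}$; applying $i$ and invoking the previous paragraph shows that $W$ is an irreducible component of $i(p^{-1}(S_1\times\{w\}))$.

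The step I expect to be the main obstacle is the middle one: identifying the fibres of the quotient Shimura morphism $q$ with coordinate slices of a product of connected Shimura data, i.e. showing that the non-compact part of $\ker q$ occurs as a genuine Shimura-theoretic product factor of $(Q,X_Q^+)$ compatible with the Hodge data. The surrounding work — the reduction in the first paragraph and the repeated adjustments of congruence subgroups ensuring that $i$ is a Shimura embedding and $p$ a Shimura covering — is routine but fiddly.
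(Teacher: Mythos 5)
Your argument follows essentially the same route as the paper: reduce $W$ to the form (Shimura embedding) $\circ$ (fibre of a quotient Shimura morphism), then split $Q^{\mathrm{ad}}$ as a product $Q_1\times Q_2$ so that the fibre becomes a slice $S_1\times\{w\}$, up to adjustments of congruence subgroups via Lemma~2.2 of \cite{OrrThesis}. The only notable differences are that the paper obtains your first reduction by directly citing Proposition~5.4 of \cite{G17} --- so the step you sketch as a ``standard minimisation argument'' is precisely the content of that cited result --- and that it splits off all of $N/Z$ (after absorbing the centre into $N$, using Theorem~21.51 of \cite{MilneAG}) rather than only the non-compact part of $N^{\mathrm{der}}$, which changes nothing since compact factors contribute trivially to the domain.
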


\begin{proof}
	Suppose that $S = \mathcal{F}_{\mathrm{pSv}}(P,X^+,\Gamma)$ where the functor $\mathcal{F}_{\mathrm{pSv}}$ is defined as in Section 3 in \cite{BD2}. By Proposition 5.4 in \cite{G17}, there exist connected pure Shimura varieties $S'$ and $\widetilde{S}$, a Shimura embedding $i: S' \to S$, a Shimura quotient morphism $\widetilde{p}: S' \to \widetilde{S}$, and a point $\widetilde{w} \in \widetilde{S}(K)$ such that $W$ is an irreducible component of $i(\widetilde{p}^{-1}(\widetilde{w}))$ (note that Shimura subdata and quotient Shimura data of connected pure Shimura data are again pure by Proposition 2.9 in \cite{G17} and Remark 3.7 in \cite{BD2}). Proposition 5.4 in \cite{G17} holds over $\mathbb{C}$, but we can always find an algebraically closed subfield $L$ of $K$ over which $W$ is defined and such that $L$ embeds into $\mathbb{C}$. Proposition 5.4 in \cite{G17} then yields $\widetilde{w} \in \widetilde{S}(\mathbb{C})$, but since $i$ is finite by Proposition 3.6 in \cite{BD2}, we must have that $\widetilde{w} \in \widetilde{S}(L)$. Since $i$ is a finite morphism 
	and since $\widetilde{p}^{-1}(\widetilde{w})$ is equidimensional by \ref{ax:4}, the above conclusion is equivalent to the fact $(\ast)$ that $\widetilde{p}^{-1}(\widetilde{w})$ and $i^{-1}(W)$ share a common irreducible component that surjects onto $W$ under $i$.
	
	Suppose that $S' = \mathcal{F}_{\mathrm{pSv}}(Q,Y^+,\Delta)$. We can assume without loss of generality that $\widetilde{S} = \mathcal{F}_{\mathrm{pSv}}((Q,Y^+)/N,\widetilde{\Delta})$ for some normal algebraic subgroup $N$ of $Q$ and some congruence subgroup $\widetilde{\Delta} \subset (Q/N)(\mathbb{Q})_+$ (see Remark 3.5 in \cite{BD2}).
	
	Let $Z$ denote the center of $Q$, then we can form the quotient Shimura datum $(Q,Y^+)/Z =: (Q^{\mathrm{ad}},Y^{+,\mathrm{ad}})$ as well as $(Q,Y^+)/ZN$ (see again Remark 3.5 in \cite{BD2}). Since $Z$ is of multiplicative type by \cite[Corollary 17.62(a)]{MilneAG}, the two quotient Shimura morphisms $(Q,Y^+) \to (Q^{\mathrm{ad}},Y^{+,\mathrm{ad}})$ and $(Q,Y^+)/N \to (Q,Y^+)/ZN$ are Shimura immersions and therefore also Shimura coverings.

	We now apply Lemma 2.2 in \cite{OrrThesis} to find a congruence subgroup $\Delta'$ of $(Q/(ZN))(\mathbb{Q})$ that contains the image of $\widetilde{\Delta}$ under the map $(Q/N)(\mathbb{Q}) \to (Q/(ZN))(\mathbb{Q})$. By Proposition 2.2 in \cite{RohlfsSchwermer} we may assume that $\Delta'$ is contained in $(Q/(ZN))(\mathbb{Q})_+$. We replace $N$ by $ZN$, $\widetilde{\Delta}$ by $\Delta'$, and $\widetilde{w}$ by its image under the induced Shimura morphism: thanks to \ref{ax:4} and the fact that $(Q,Y^+)/N \to (Q,Y^+)/ZN$ is a Shimura covering, this does not affect $(\ast)$ or any other of the properties stated above. Having done that, we have that $Z \subset N$.
	
	Furthermore, by Theorem 21.51 in \cite{MilneAG}, we have that $Q^{\mathrm{ad}} = Q_1 \times Q_2$ for some algebraic subgroups $Q_1, Q_2$ of $Q^{\mathrm{ad}}$ such that $N/Z = Q_1 \times \{1\}$. For $\{i,j\} = \{1,2\}$, we define connected pure Shimura data $(Q_i,Y_i^+)$ by $(Q_i,Y_i^+) := (Q^{\mathrm{ad}},Y^{+,\mathrm{ad}})/Q_j$. Note that $(Q_2,Y_2^+) \simeq (Q,Y^+)/N$ and we will from now on identify these two connected pure Shimura data via the canonical isomorphism. We obtain a Shimura covering $(Q^{\mathrm{ad}},Y^{+,\mathrm{ad}}) \to (Q_1,Y_1^+) \times (Q_2,Y_2^+)$.
	
	Let $\mathrm{ad}: Q \to Q^{\mathrm{ad}}$ and $\psi: Q^{\mathrm{ad}} \to Q/N = Q_2$ denote the quotient homomorphisms. Set $\Delta_2 = \widetilde{\Delta}$ regarded as a congruence subgroup of $Q_2(\mathbb{Q})_+$. Fix any congruence subgroup $\Delta_1 \subset Q_1(\mathbb{Q})_+$ (which exists, see Proposition 2.2 in \cite{RohlfsSchwermer}). Then $\Delta_1 \times \Delta_2$ is a congruence subgroup of $Q^{\mathrm{ad}}(\mathbb{Q})_+$. Note that replacing $\Delta$ by some other congruence subgroup contained in $(\psi \circ \mathrm{ad})^{-1}(\Delta_2)$ does not affect $(\ast)$ or any other of the properties stated above. We can therefore again apply Lemma 2.2 in \cite{OrrThesis} and assume without loss of generality that $\Delta \subset \ad^{-1}(\Delta_1 \times \Delta_2)$.
	
	We now set $S_i = \mathcal{F}_{\mathrm{pSv}}(Q_i,Y_i^+,\Delta_i)$ ($i = 1,2$) and $p$ equal to the image under $\mathcal{F}_{\mathrm{pSv}}$ of the induced Shimura morphism $(Q,Y^+,\Delta) \to (Q^{\mathrm{ad}},Y^{+,\mathrm{ad}},\Delta_1 \times \Delta_2) \to (Q_1,Y_1^+,\Delta_1) \times (Q_2,Y_2^+,\Delta_2)$. We also let $w$ denote the point in $S_2(K)$ that is identified with $\widetilde{w}$ under the identification $((Q,Y^+)/N,\widetilde{\Delta}) = (Q_2,Y_2^+,\Delta_2)$. The proposition now follows since $p^{-1}(S_1 \times \{w\}) = \widetilde{p}^{-1}(\widetilde{w})$.
\end{proof}

\begin{prop}\label{prop:pointShimura}
	 In the distinguished category of connected pure Shimura varieties over $K$, the following holds.
	Let $\Sigma$ be the $\Sigma$-orbit of $x\in S_0(K)$. Then, a $\Sigma$-special subvariety of a connected pure Shimura variety $S$ contains a $\Sigma$-special point and is therefore $\Sigma_S$-point-special.
\end{prop}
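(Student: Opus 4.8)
The plan is to deduce Proposition~\ref{prop:pointShimura} from the fibration structure of weakly special subvarieties given by Proposition~\ref{prop:weaklyspecialshimura}, combined with the permanence properties of $\Sigma$-special subvarieties from Proposition~\ref{prop:permanencesigmaspecial}. Write $\Sigma = \Sigma(x)$ for $x \in S_0(K)$ and let $W$ be a $\Sigma$-special subvariety of a connected pure Shimura variety $S$. Since $W$ is in particular weakly special, I would apply Proposition~\ref{prop:weaklyspecialshimura} to obtain connected pure Shimura varieties $S'$, $S_1$, $S_2$, a Shimura embedding $i \colon S' \to S$, a Shimura covering $p \colon S' \to S_1 \times S_2$, and a point $w \in S_2(K)$ such that $W$ is an irreducible component of $i(p^{-1}(S_1 \times \{w\}))$. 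Setting $\widetilde{p} := \pr_2 \circ p \colon S' \to S_2$, which is a distinguished morphism, we have $\widetilde{p}^{-1}(w) = p^{-1}(S_1 \times \{w\})$; this preimage is equidimensional by \ref{ax:4}, and since $i$ is finite while $p$ is finite and surjective, routine dimension bookkeeping shows that $W = i(V')$ for some irreducible component $V'$ of $p^{-1}(S_1 \times \{w\})$, that $V'$ is in fact an irreducible component of $i^{-1}(W)$, and that $p(V') = S_1 \times \{w\}$.

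The key step is to show that $w \in \Sigma_{S_2}$. Since $W$ is $\Sigma$-special in $S$ and $V'$ is an irreducible component of $i^{-1}(W)$, Proposition~\ref{prop:permanencesigmaspecial}(3) gives that $V'$ is $\Sigma$-special in $S'$. Applying Proposition~\ref{prop:permanencesigmaspecial}(1) to $p$, the subvariety $p(V') = S_1 \times \{w\}$ is $\Sigma$-special in $S_1 \times S_2$; applying it again to the projection $\pr_2 \colon S_1 \times S_2 \to S_2$, the singleton $\{w\}$ is $\Sigma$-special in $S_2$. By the remark following Lemma~\ref{lem:sigmaspecialpoint}, this is precisely the statement that $w \in \Sigma_{S_2}$.

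It then remains to produce the desired point. Since a connected pure Shimura variety contains a special point (a classical fact), fix a special point $s_1 \in S_1(K)$; by Proposition~\ref{prop:heckeandspecialpoint}, $s_1 \in \Sigma_{S_1}$. Proposition~\ref{prop:heckeunderproduct} then yields $(s_1,w) \in \Sigma_{S_1} \times \Sigma_{S_2} = \Sigma_{S_1 \times S_2}$, and since $p$ is a distinguished morphism with finite fibers, Proposition~\ref{prop:heckeunderfinitecover} gives $p^{-1}((s_1,w)) \subset \Sigma_{S'}$. Because $p(V') = S_1 \times \{w\}$ contains $(s_1,w)$, we may choose $v' \in V'(K)$ with $p(v') = (s_1,w)$; then $v' \in \Sigma_{S'}$, so $\{v'\}$ is a $\Sigma$-special point of $S'$, and applying Proposition~\ref{prop:permanencesigmaspecial}(1) to $i$ shows that $i(v')$ is a $\Sigma$-special point of $S$ lying in $i(V') = W$. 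Hence $W$ contains a point of $\Sigma_S$ and is therefore $\Sigma_S$-point-special.

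I expect the main technical obstacle to be the dimension bookkeeping of the first paragraph: one must verify that the component $V'$ of $i^{-1}(W)$ that maps onto $W$ really does surject onto $S_1 \times \{w\}$ under $p$, which rests on the equidimensionality of $\widetilde{p}^{-1}(w)$ furnished by \ref{ax:4} together with the finiteness of $i$ and the finiteness and surjectivity of $p$. In contrast to the semiabelian case (Proposition~\ref{prop:almostsplit}), no preliminary reduction to the situation where $x$ lies in no proper special subvariety of $S_0$ is required here, since Proposition~\ref{prop:weaklyspecialshimura} already provides the fibration structure that there had to be extracted by hand from the defining homomorphism of algebraic groups.
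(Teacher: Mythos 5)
Your proposal is correct and follows essentially the same route as the paper: reduce via Proposition~\ref{prop:weaklyspecialshimura} to the fibration $p\colon S'\to S_1\times S_2$, use Proposition~\ref{prop:permanencesigmaspecial} to show $w\in\Sigma_{S_2}$, and then combine a special point of $S_1$ with $w$ via Propositions~\ref{prop:heckeandspecialpoint} and~\ref{prop:heckeunderproduct}. The only (cosmetic) difference is in the last step, where you pull the point $(s_1,w)$ back through $p$ directly using Proposition~\ref{prop:heckeunderfinitecover} rather than invoking the point-special part of Proposition~\ref{prop:permanencesigmaspecial}(3); both are valid.
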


\begin{proof}
	Let $W$ be a $\Sigma$-special subvariety of $S$. By Proposition \ref{prop:weaklyspecialshimura}, there exist connected pure Shimura varieties $S'$, $S_1$, and $S_2$ as well as a Shimura embedding $i: S' \to S$, a Shimura covering $p: S' \to S_1 \times S_2$, and a point $w \in S_2(K)$ such that $W$ is an irreducible component of $i(p^{-1}(S_1 \times \{w\}))$. By \ref{ax:4}, all irreducible components of $p^{-1}(S_1 \times \{w\})$ have the same dimension, which must be equal to $\dim W$ as well as to $\dim S_1$ since $i$ and $p$ have finite fibers. It follows as in the proof of Proposition \ref{prop:weaklyspecialshimura} that $i^{-1}(W)$ and $p^{-1}(S_1 \times \{w\})$ share an irreducible component. By Proposition \ref{prop:permanencesigmaspecial}(3), this component is $\Sigma$-special in $S'$, which implies by Proposition \ref{prop:permanencesigmaspecial}(1) that $\{w\}$ is $\Sigma$-special in $S_2$. By Lemma \ref{lem:sigmaspecialpoint}, we have that $w \in \Sigma_{S_2}$. By Lemma 13.3 in \cite{MilneISV}, whose proof can be modified to work also with our definition of a connected Shimura datum that is slightly different from the one in \cite{MilneISV} (see Remark 4.4 in \cite{BD2}), there exists a special point $s_1 \in S_1(K)$. By Propositions \ref{prop:heckeunderproduct} and \ref{prop:heckeandspecialpoint}, we have that $(s_1,w) \in \Sigma_{S_1 \times S_2}$. So $S_1 \times \{w\}$ is $\Sigma_{S_1 \times S_2}$-point-special. By Proposition \ref{prop:permanencesigmaspecial}(3), every irreducible component of $p^{-1}(S_1 \times \{w\})$ is $\Sigma_{S'}$-point-special since $p$ is a Shimura covering, so has finite fibers and all irreducible components of $p^{-1}(S_1 \times \{w\})$ are of dimension $\dim S_1$. By Proposition \ref{prop:permanencesigmaspecial}(1) and the above, $W$ is therefore $\Sigma_S$-point-special and we are done.
\end{proof}

Our notion of a $\Sigma$-special subvariety of a connected pure Shimura variety $S$ (which has now turned out to coincide with the notion of ``$\Sigma_S$-point-special") is related to the sets $WS_X(M,X_M,x)$ defined in \cite{Richard_Yafaev_24a}, see Section 5 and Theorem 7.1 in \cite{Richard_Yafaev_24a} and Lemma \ref{lem:sigmaspecialchar}.

\section{Mordell-Lang}\label{sec:ml}

We now formulate two statements that, in view of Subsection \ref{subsec:semiab1}, we can consider as analogues of the Mordell-Lang conjecture.

\begin{defn}\label{defn:WML} Fix an integer $m\geq 0$ and let $\Sigma$ be a $\Sigma$-orbit.
	For a distinguished variety $Z$, we say that $\WML(Z,\Sigma,m)$ ($\WML$ stands for \emph{weak Mordell-Lang}) holds if any subvariety of $Z$ of dimension at most $m$ that contains a Zariski dense subset of $\Sigma$-special points is weakly special.
\end{defn}
\begin{defn}\label{defn:SML} Fix an integer $m\geq 0$ and let $\Sigma$ be a $\Sigma$-orbit.
	For a distinguished variety $Z$, we say that $\SML(Z,\Sigma,m)$ ($\SML$ stands for \emph{strong Mordell-Lang}) holds if any subvariety of $Z$ of dimension at most $m$ contains at most finitely many maximal $\Sigma$-special subvarieties.
\end{defn}

It is easy to see that $\SML(Z,\Sigma,m)$ implies $\WML(Z,\Sigma,m)$. If the distinguished category satisfies \ref{ax:5} and $\Sigma$ is a $\Sigma$-orbit, it follows from Lemma \ref{lem:specialimpliespointspecial} and the fact that any weakly special subvariety containing a $\Sigma_Z$-point-special subvariety is itself $\Sigma_Z$-point-special that $\SML(Z,\Sigma,m)$ also implies that every subvariety of $Z$ of dimension at most $m$ contains at most finitely many maximal $\Sigma_Z$-point-special subvarieties.

Under an additional hypothesis that is often satisfied in the applications, the two statements are equivalent.

\begin{lem}\label{lem:wmlequivalentsml}
    Let $\Sigma$ be a $\Sigma$-orbit in a distinguished category that satisfies \ref{ax:5} and let $Z$ be a distinguished variety. If every $\Sigma$-special subvariety of $Z$ contains a dense set of $\Sigma$-special points, then $\WML(Z,\Sigma,m) \Leftrightarrow \SML(Z,\Sigma,m)$ for all non-negative integers $m$.
\end{lem}

\begin{proof}
    One implication has already been discussed above. For the other implication, let $V$ be a subvariety of $Z$ of dimension $\leq m$ and let $S \subset V$ be a maximal $\Sigma$-special subvariety. The Zariski closure of all $\Sigma$-special points in $V$ contains $S$ and is a finite union of weakly special and hence $\Sigma_Z$-point-special subvarieties by $\WML(Z,\Sigma,m)$. By Lemma \ref{lem:specialimpliespointspecial}, each component of this Zariski closure is also $\Sigma$-special. It follows from the maximality of $S$ that $S$ is equal to such a component and so there are at most finitely many possibilities for $S$.
\end{proof}

As one might expect, Zilber-Pink implies Mordell-Lang.

\begin{thm}\label{thm:zpimpliesml}
	Let $\Sigma$ be the $\Sigma$-orbit of $x \in X(K)$ and let $Z$ be a distinguished variety. If the statement $\ZP(Z \times X,m,\dim \langle \{x\} \rangle)$ from Definition 10.1 in \cite{BD2} holds for a non-negative integer $m$, then $\SML(Z,\Sigma,m)$ holds.
\end{thm}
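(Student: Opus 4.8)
The plan is to transport everything into the product $Z \times X$ and invoke the Zilber--Pink statement there, with Lemma~\ref{lem:sigmaspecialchar} serving as the dictionary between $\Sigma$-special subvarieties of $Z$ and intersections with special subvarieties of $Z \times X$. Fix a subvariety $V \subseteq Z$ with $\dim V \le m$; we must show that $V$ contains only finitely many maximal $\Sigma$-special subvarieties. Put $V' := V \times \{x\} \subseteq Z \times X$, so that $\dim V' = \dim V \le m$, and write $p \colon Z \times X \to X$ for the projection, which is a distinguished morphism by \ref{ax:1}. By Lemma~\ref{lem:sigmaspecialchar}, the assignment $U \mapsto U \times \{x\}$ is an inclusion-preserving bijection between the $\Sigma$-special subvarieties of $Z$ and the irreducible components of intersections of $Z \times \{x\}$ with special subvarieties of $Z \times X$; in particular it matches maximal $\Sigma$-special subvarieties of $V$ with maximal such components contained in $V'$.

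The key point, which I expect to be the main obstacle, is the following defect computation: \emph{for every subvariety $U \subseteq Z$ one has $\delta(U \times \{x\}) \ge \dim \langle \{x\} \rangle$ (defect taken in $Z \times X$), with equality precisely when $U$ is $\Sigma$-special.} For this, observe that $U \times \{x\}$ is contained in the special subvariety $\langle U \rangle \times \langle \{x\} \rangle$, so $\langle U \times \{x\} \rangle \subseteq \langle U \rangle \times \langle \{x\} \rangle$ and hence $p(\langle U \times \{x\} \rangle)$ is a special subvariety of $X$ lying between $\{x\}$ and $\langle \{x\} \rangle$, forcing $p(\langle U \times \{x\} \rangle) = \langle \{x\} \rangle$. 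Writing $\langle U \times \{x\} \rangle$ as the image of a distinguished morphism with finite fibres (Lemma~\ref{lem:finfibspecial}) and applying \ref{ax:4} to its composition with $p$, one finds that $(Z \times \{x\}) \cap \langle U \times \{x\} \rangle$ is equidimensional of dimension $\dim \langle U \times \{x\} \rangle - \dim \langle \{x\} \rangle$. Since $U \times \{x\}$ is a closed irreducible subvariety of this intersection, $\dim U \le \dim \langle U \times \{x\} \rangle - \dim \langle \{x\} \rangle$, which is the stated inequality; and equality holds exactly when $U \times \{x\}$ is an irreducible component of $(Z \times \{x\}) \cap \langle U \times \{x\} \rangle$, i.e., by Lemma~\ref{lem:sigmaspecialchar} applied with the special subvariety $\langle U \times \{x\} \rangle$, exactly when $U$ is $\Sigma$-special.

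With the claim in hand the theorem follows quickly. Let $W$ be a maximal $\Sigma$-special subvariety of $V$; then $W \times \{x\} \subseteq V'$ and $\delta(W \times \{x\}) = \dim \langle \{x\} \rangle$. I claim $W \times \{x\}$ is optimal for $V'$ in $Z \times X$: if $W \times \{x\} \subsetneq U' \subseteq V'$ with $\delta(U') \le \delta(W \times \{x\}) = \dim \langle \{x\} \rangle$, then $U' = U \times \{x\}$ for some $U$ with $W \subsetneq U \subseteq V$, and the claim (together with the inequality $\delta \ge \dim\langle\{x\}\rangle$) forces $U$ to be $\Sigma$-special, contradicting the maximality of $W$. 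The same reasoning shows $W \times \{x\}$ is even maximal among the subvarieties of $V'$ that are optimal for $V'$ in $Z \times X$ with defect at most $\dim \langle \{x\} \rangle$. As $W \mapsto W \times \{x\}$ is injective, $\ZP(Z \times X, m, \dim \langle \{x\} \rangle)$ bounds the number of such $W$, and therefore $\SML(Z, \Sigma, m)$ holds.
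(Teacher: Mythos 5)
Your proposal is correct and follows essentially the same route as the paper's proof: both transfer the problem to $Z\times X$ via $U\mapsto U\times\{x\}$, establish that $\delta(U\times\{x\})\geq\dim\langle\{x\}\rangle$ with equality exactly when $U$ is $\Sigma$-special (using Lemma \ref{lem:sigmaspecialchar} together with the equidimensionality of fibers of the projection restricted to a special subvariety), and conclude that maximal $\Sigma$-special subvarieties of $V$ yield optimal subvarieties of $V\times\{x\}$ of defect $\dim\langle\{x\}\rangle$, to which $\ZP(Z\times X,m,\dim\langle\{x\}\rangle)$ applies. The only cosmetic difference is that you work directly with the special closure $\langle U\times\{x\}\rangle$ where the paper normalizes an arbitrary special $S$ so that it projects onto $\langle\{x\}\rangle$, and you prove only the implication actually needed rather than the full equivalence.
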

 
\begin{proof}
Let $V$ be a subvariety of $Z$. We claim that a subvariety $V'$ of $V$ is maximal $\Sigma$-special if and only if $V' \times \{x\}$ is optimal of defect $\dim \langle \{x\} \rangle$ for $V \times \{x\}$ in $Z \times X$. This claim clearly implies the theorem.

First of all, note that any subvariety of $V \times \{x\}$ has defect equal to at least $\dim \langle \{x\} \rangle$ since $\langle V_1 \times \{x\} \rangle$ surjects onto $\langle \{x\} \rangle$ for every subvariety $V_1$ of $V$. Furthermore, the defect of a subvariety $V_1 \times \{x\}$ of $Z \times \{x\}$ is equal to $\dim \langle \{x\} \rangle$ if and only if there exists a special subvariety $S$ of $Z \times X$ such that $V_1 \times \{x\} \subset S$ and $\dim S = \dim V_1 + \dim \langle \{x\} \rangle$.

We claim that this is equivalent to the existence of a special subvariety $S$ of $Z \times X$ such that $V_1 \times \{x\}$ is an irreducible component of $(Z \times \{x\}) \cap S$.

For a special subvariety $S$ of $Z \times X$, all fibers of the projection morphism $\pi: S \to X$ are equidimensional by Lemma 7.4 in \cite{BD2} and therefore of dimension $\dim S - \dim \pi(S)$ by the Fiber Dimension Theorem (Theorem 7.3 in \cite{BD2}). So, if $V_1 \times \{x\} \subset S$, then $\dim V_1 \leq \dim S - \dim \pi(S)$. In particular, if $\dim S = \dim V_1 + \dim \langle \{x\} \rangle$, then $\dim \pi(S) \leq \dim \langle \{x\} \rangle$ and hence $\pi(S) = \langle \{x\} \rangle$. Furthermore, in that case, $V_1 \times \{x\}$ must be an irreducible component of $\pi^{-1}(x)$, i.e., an irreducible component of $(Z \times \{x\}) \cap S$.

Vice versa, if $V_1 \times \{x\}$ is an irreducible component of $(Z \times \{x\}) \cap S$ for a special subvariety $S$ of $Z \times X$, then, thanks to Lemma 6.7 in \cite{BD2}, we can assume that $S$ projects onto $\langle \{x\} \rangle$ in the second coordinate. By the above, this implies that $\delta(V_1 \times \{x\}) \leq \dim S - \dim V_1 = \dim \langle \{x\} \rangle$ and therefore $\delta(V_1 \times \{x\}) = \dim \langle \{x\} \rangle$.

This establishes the above claim.

We deduce from Lemma \ref{lem:sigmaspecialchar} that the defect of a subvariety $V' \times \{x\}$ of $V \times \{x\}$ is equal to $\dim \langle \{x\} \rangle$ if and only if $V'$ is $\Sigma$-special. Since any subvariety of $V \times \{x\}$ has defect equal to at least $\dim \langle \{x\} \rangle$ as we have already shown above, it follows that a subvariety $V' \times \{x\}$ of $V \times \{x\}$ is optimal for $V \times \{x\}$ in $Z \times X$ if and only if $V'$ is a maximal $\Sigma$-special subvariety of $V$, which is the claim we wanted to establish.
\end{proof}

\begin{defn}
Let $\Sigma$ be a $\Sigma$-orbit, let $Z$ be a distinguished variety, and let $V$ be a subvariety of $Z$. A subvariety $W$ of $V$ is called a \emph{$\Sigma$-subvariety (of $V$ in $Z$)} if $\langle W \rangle_{\ws}$ is $\Sigma$-special. A subvariety $W$ of $V$ is called \emph{$\Sigma$-atypical (for $V$ in $Z$)} if $W$ is a $\Sigma$-subvariety and $\delta_{\ws}(W) < \dim Z - \dim V$.
\end{defn}

\begin{prop}\label{prop:wo}
Let $\Sigma$ be the $\Sigma$-orbit of $x \in X(K)$, let $Z$ be a distinguished variety, and let $V$ be a subvariety of $Z$. A subvariety $W$ of $V$ is a $\Sigma$-subvariety of $V$ in $Z$ if and only if $\delta(W \times \{x\}) - \delta_{\ws}(W) = \delta(\{x\})$.

Furthermore, if a subvariety $W$ of $V$ is maximal $\Sigma$-atypical, then $W \times \{x\}$ is an optimal subvariety for $V \times \{x\}$ in $Z \times X$. Any maximal $\Sigma$-atypical subvariety of $V$ is weakly optimal for $V$ in $Z$.
\end{prop}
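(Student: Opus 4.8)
The plan is to prove the stated equivalence first, and then read off the two optimality assertions from it together with Corollary \ref{cor:sigmaspeccont}. Write $\pi\colon Z\times X\to X$ for the projection and recall $\delta(\{x\})=\dim\langle\{x\}\rangle$. The key point is the inequality
\[
\dim\langle W\times\{x\}\rangle\ \geq\ \dim\langle W\rangle_{\ws}+\dim\langle\{x\}\rangle ,
\]
which rewrites as $\delta(W\times\{x\})-\delta_{\ws}(W)\geq\delta(\{x\})$ and holds for \emph{every} subvariety $W$ of $Z$. To see it, let $S=\langle W\times\{x\}\rangle$; by Lemma 7.4 in \cite{BD2} the fibres of $\pi|_S$ are equidimensional, hence (Fiber Dimension Theorem, Theorem 7.3 in \cite{BD2}) all of dimension $\dim S-\dim\pi(S)$, and $\dim\pi(S)\geq\dim\langle\{x\}\rangle$ since $\langle\{x\}\rangle\subseteq\pi(S)$. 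The fibre $S\cap(Z\times\{x\})$, viewed inside $Z\times\{x\}\cong Z$, has all components weakly special, so the component containing $W$ contains $\langle W\rangle_{\ws}$; thus $\dim\langle W\rangle_{\ws}\leq\dim S-\dim\pi(S)\leq\dim S-\dim\langle\{x\}\rangle$, which is the claim.

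For the equivalence, suppose first that equality holds. Then in the chain just displayed (with $S=\langle W\times\{x\}\rangle$) every intermediate inequality is an equality: $\pi(S)=\langle\{x\}\rangle$, and $\langle W\rangle_{\ws}$ coincides with the component of $S\cap(Z\times\{x\})$ through $W$; in particular $\langle W\rangle_{\ws}\times\{x\}$ is an irreducible component of the intersection of $Z\times\{x\}$ with the special subvariety $S$ of $Z\times X$, so by Lemma \ref{lem:sigmaspecialchar} applied to $\langle W\rangle_{\ws}$ the weakly special subvariety $\langle W\rangle_{\ws}$ is $\Sigma$-special, i.e.\ $W$ is a $\Sigma$-subvariety. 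Conversely, if $W$ is a $\Sigma$-subvariety then $\langle W\rangle_{\ws}$ is $\Sigma$-special, so by Lemma \ref{lem:sigmaspecialchar} the set $\langle W\rangle_{\ws}\times\{x\}$ is an irreducible component of $(Z\times\{x\})\cap S'$ for some special subvariety $S'\subseteq Z\times X$, which we may assume to project onto $\langle\{x\}\rangle$ by Lemma 6.7 in \cite{BD2}; then $\dim\langle W\rangle_{\ws}=\dim S'-\dim\langle\{x\}\rangle$, while $\langle W\times\{x\}\rangle\subseteq S'$ gives $\dim\langle W\times\{x\}\rangle\leq\dim S'$, and combined with the displayed inequality this forces equality. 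This proves the first assertion.

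For the ``furthermore'' part, let $W\subseteq V$ be maximal $\Sigma$-atypical; then $\langle W\rangle_{\ws}$ is $\Sigma$-special and $\delta_{\ws}(W)<\dim Z-\dim V$ (if $\dim V=\dim Z$ then $V=Z$ and there are no $\Sigma$-atypical subvarieties, so nothing is to prove). Suppose $W\times\{x\}$ were not optimal for $V\times\{x\}$ in $Z\times X$. Every subvariety of $V\times\{x\}$ is of the form $W'\times\{x\}$ with $W'\subseteq V$, so there is $W\subsetneq W'\subseteq V$ with $\delta(W'\times\{x\})\leq\delta(W\times\{x\})=\delta_{\ws}(W)+\delta(\{x\})$; together with the displayed inequality $\delta(W'\times\{x\})\geq\delta_{\ws}(W')+\delta(\{x\})$ this yields $\delta_{\ws}(W')\leq\delta_{\ws}(W)<\dim Z-\dim V$. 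Since $W\subseteq W'$ we have $\langle W\rangle_{\ws}\subseteq\langle W'\rangle_{\ws}$, and $\langle W\rangle_{\ws}$ is $\Sigma$-special, so $\langle W'\rangle_{\ws}$ is $\Sigma$-special by Corollary \ref{cor:sigmaspeccont}; thus $W'$ is $\Sigma$-atypical, contradicting the maximality of $W$. The same argument, with $\delta_{\ws}$ replacing $\delta(-\times\{x\})$, shows that any $W\subsetneq U\subseteq V$ with $\delta_{\ws}(U)\leq\delta_{\ws}(W)$ would produce a $\Sigma$-atypical $U\supsetneq W$, so $W$ is weakly optimal for $V$ in $Z$.

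The main obstacle is the equivalence: one must keep careful track of which of the inequalities above become equalities, and the whole argument rests on the fibre‑dimension control for special subvarieties of $Z\times X$ under $\pi$ together with Lemma \ref{lem:sigmaspecialchar}. Once the equivalence is in place, the optimality statements reduce to the upward stability of $\Sigma$‑specialness among weakly special subvarieties (Corollary \ref{cor:sigmaspeccont}), which is precisely what forces the ``competitor'' $W'$ (resp.\ $U$) to be $\Sigma$‑atypical again; alternatively, one may first replace $W'$ (resp.\ $U$) by an irreducible component of $\langle W\rangle_{\ws}\cap W'$ (resp.\ $\langle W\rangle_{\ws}\cap U$) through $W$, which automatically has weakly special closure $\langle W\rangle_{\ws}$, but this only reduces the problem to the same point.
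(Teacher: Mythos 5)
Your proof of the equivalence in the first part is correct and is essentially the paper's argument: both rest on Lemma~\ref{lem:sigmaspecialchar} together with the equidimensionality of the fibres of the projection restricted to a special subvariety of $Z\times X$ (Lemma 7.4 and Theorem 7.3 in \cite{BD2}), and your general inequality $\delta(W\times\{x\})-\delta_{\ws}(W)\geq\delta(\{x\})$ is exactly the inequality the paper also establishes.

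The ``furthermore'' part, however, has a genuine gap: at the decisive moment you invoke Corollary~\ref{cor:sigmaspeccont} to conclude that the competitor $W'$ (resp.\ $U$) is again a $\Sigma$-subvariety. That corollary is only proved under the hypothesis \ref{ax:5} (its proof goes through Lemma~\ref{lem:finfibweaklyspecial}), whereas Proposition~\ref{prop:wo} is stated in an arbitrary distinguished category, and the paper relies on this generality (the first sentence of Remark~\ref{rmk:sigmaatypical} is asserted without \ref{ax:5}). So as written you have proved a strictly weaker statement. The fix is to stay entirely inside the defect calculus you have already set up: for $W\subseteq W'$ the Defect Condition (Theorem 7.2 in \cite{BD2}) gives $\delta(W'\times\{x\})-\delta_{\ws}(W')\leq\delta(W\times\{x\})-\delta_{\ws}(W)=\delta(\{x\})$, and combined with your general inequality this forces equality, so $W'$ is a $\Sigma$-subvariety by the first part of the proposition --- no appeal to Corollary~\ref{cor:sigmaspeccont} (hence to \ref{ax:5}) is needed. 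The same computation handles the weak-optimality claim, where your hypothesis on the competitor is $\delta_{\ws}(U)\leq\delta_{\ws}(W)$. This is precisely how the paper argues (it phrases it via an optimal $U\supseteq W$ with $\delta(U\times\{x\})\leq\delta(W\times\{x\})$ rather than by contradiction, but the content is the same); with this substitution your proof goes through in full generality.
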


\begin{proof}
Note first that $\langle W \times \{x\} \rangle_{\ws} = \langle W \rangle_{\ws} \times \{x\}$ and $\delta_{\ws}(W \times \{x\}) = \delta_{\ws}(W)$ for every subvariety $W$ of $V$.

It follows from Lemma \ref{lem:sigmaspecialchar} that $W$ is a $\Sigma$-subvariety if and only if $\langle W \times \{x\} \rangle_{\ws}$ is an irreducible component of $S \cap (Z \times \{x\})$ for some special subvariety $S$ of $Z \times X$, which in turn is equivalent to $\langle W \times \{x\} \rangle_{\ws}$ being an irreducible component of $\langle W \times \{x\} \rangle \cap (Z \times\{x\})$. It follows from Lemma 7.4 in \cite{BD2} and the definition of the special closure that every irreducible component of $\langle W \times \{x\} \rangle \cap (Z \times \{x\})$ has dimension $\dim \langle W \times \{x\} \rangle - \dim \langle \{x\} \rangle$.

We deduce that $W$ being a $\Sigma$-subvariety is equivalent to
\[ \dim \langle W \times \{x\} \rangle_{\ws} = \dim \langle W \times \{x\} \rangle - \dim \langle \{x\} \rangle,\]
which in turn is equivalent to
\[ \delta(\{x\}) = \dim \langle \{x\} \rangle = \dim \langle W \times \{x\} \rangle - \dim \langle W \times \{x\} \rangle_{\ws} = \delta(W \times \{x\}) - \delta_{\ws}(W).\]
This establishes the first part of the proposition.

For the second part, let $W$ be a maximal $\Sigma$-atypical subvariety of $V$. There exists a subvariety $U$ of $V$ such that $W \subset U$, $U \times \{x\}$ is optimal for $V \times \{x\}$ in $Z \times X$, and $\delta(U \times \{x\}) \leq \delta(W \times \{x\})$. It follows that $W \times \{x\}$ is optimal for $V \times \{x\}$ in $Z \times X$ if and only if $W = U$.

 We claim that $U$ is a $\Sigma$-subvariety. Theorem 7.2 in \cite{BD2} (the Defect Condition) implies that
\[ \delta(U \times \{x\}) - \delta_{\ws}(U) \leq \delta(W \times \{x\}) - \delta_{\ws}(W) = \delta(\{x\}).\]
On the other hand, $\langle U \times \{x\} \rangle$ projects surjectively onto $\langle \{x\} \rangle$ with the fibers of the restriction  to $\langle U \times\{x\} \rangle$ of the projection map all being of the same dimension equal to at least $\dim \langle U \rangle_{\ws}$. Theorem 7.3 in \cite{BD2} (the Fiber Dimension Theorem) then implies that
\[ \dim \langle U \times\{x\} \rangle -  \dim \langle \{x\} \rangle \geq \dim \langle U \rangle_{\ws}\]
or equivalently
\[ \delta(U \times\{x\}) - \delta_{\ws}(U) \geq \delta(\{x\}).\]
Combining this inequality with the one above, we deduce that equality must hold and so $U$ is a $\Sigma$-subvariety.

It follows that $\delta_{\ws}(U) = \delta(U \times \{x\}) - \delta(\langle\{x\}\rangle) \leq \delta(W \times \{x\}) - \delta(\langle\{x\}\rangle) = \delta_{\ws}(W) < \dim Z - \dim V$ and so $U$ is $\Sigma$-atypical. But since $W$ is maximal $\Sigma$-atypical, we must have that $U = W$ and we are done.

Finally, the last sentence in the second part of the proposition follows from Proposition 8.2 in \cite{BD2} and the fact that $W$ being weakly optimal for $V$ in $Z$ is equivalent to $W \times \{x\}$ being weakly optimal for $V \times \{x\}$ in $Z \times X$.
\end{proof}

\begin{defn}
Let $\Sigma$ be a $\Sigma$-orbit, let $Z$ be a distinguished variety, and let $V$ be a subvariety of $Z$. A subvariety $W$ of $V$ is called \emph{$\Sigma$-optimal (for $V$ in $Z$)} if $W$ is weakly optimal for $V$ in $Z$ as well as a $\Sigma$-subvariety of $V$.
\end{defn}

\begin{rmk}\label{rmk:sigmaatypical}
Let $\Sigma$ be a $\Sigma$-orbit, let $Z$ be a distinguished variety, and let $V$ be a subvariety of $Z$. All maximal $\Sigma$-atypical subvarieties of $V$ are $\Sigma$-optimal for $V$ in $Z$. If the distinguished category satisfies \ref{ax:5}, then Corollary \ref{cor:sigmaspeccont} implies that all maximal $\Sigma$-special subvarieties of $V$ are $\Sigma$-optimal for $V$ in $Z$.
\end{rmk} 

The following theorem is the main result of this section.

\begin{thm}\label{thm:AD1}
	Let $\Sigma$ be a $\Sigma$-orbit in a distinguished category satisfying \ref{ax:5}. Let $Z$ be a distinguished variety and let $\mathcal{S}$ be a class of distinguished varieties such that $Z \in \mathcal{S}$ and for every $X' \in \mathcal{S}$ and every subvariety $V$ of $X'$, the set in \ref{ax:5} can be chosen with all $Y_\phi,Z_\psi$ belonging to $\mathcal{S}$. Fix an integer $m\geq 0$. Assume $\WML(X',\Sigma,m)$ holds for every $X' \in \mathcal{S}$. Then, if $V\subset Z$ is a subvariety of dimension at most $m$, there are at most finitely many $\Sigma$-optimal varieties for $V$ in $Z$.
\end{thm}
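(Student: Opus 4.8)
The plan is to argue by contradiction: assume there are infinitely many $\Sigma$-optimal subvarieties for $V$ in $Z$, and extract from this a weakly special subvariety of a member of $\mathcal{S}$ to which $\WML$ can be applied, ultimately contradicting weak optimality. Write $\Sigma=\Sigma(x)$ with $x\in X(K)$. First I would record the reduction that lets us work with weakly special closures: if $W$ is $\Sigma$-optimal and $U$ is the irreducible component of $\langle W\rangle_{\ws}\cap V$ containing $W$, then $\langle U\rangle_{\ws}\subset\langle W\rangle_{\ws}$, so $\delta_{\ws}(U)\le\dim\langle W\rangle_{\ws}-\dim U\le\delta_{\ws}(W)$, whence $U=W$ by weak optimality; thus $W$ is an irreducible component of $\langle W\rangle_{\ws}\cap V$. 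Since a weakly special subvariety $T$ of $Z$ yields only finitely many irreducible components of $T\cap V$, infinitely many $\Sigma$-optimal subvarieties force infinitely many pairwise distinct weakly special closures $\langle W\rangle_{\ws}$.

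Next, each such $W$ is in particular weakly optimal for $V$ in $Z$, so I would apply \ref{ax:5}, in the refined form allowed by the hypothesis on $\mathcal{S}$, to $V\subset Z$: this gives finitely many pairs $(\phi,\psi)$ of distinguished morphisms $\phi\colon Y_\phi\to Z$ with finite fibres and $\psi\colon Y_\phi\to Z_\psi$, all $Y_\phi,Z_\psi\in\mathcal{S}$, such that each $\langle W\rangle_{\ws}$ is an irreducible component of $\phi(\psi^{-1}(z))$ for one of these pairs and some $z\in Z_\psi(K)$. Fixing a pair $(\phi,\psi)$ for which infinitely many distinct such closures arise, and using that a fixed $z$ yields only finitely many components of $\phi(\psi^{-1}(z))$, we obtain an infinite set $\mathcal{Z}\subset Z_\psi(K)$ of realizing parameters. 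Each $z\in\mathcal{Z}$ satisfies: \textbf{(a)} $z\in\overline{\psi(\phi^{-1}(V))}$, a subvariety of $Z_\psi$ of dimension $\le\dim V\le m$ (because $\phi$ has finite fibres), since $\langle W\rangle_{\ws}\cap V\supset W\neq\emptyset$ for the corresponding $\Sigma$-optimal $W$; and \textbf{(b)} $z$ is a $\Sigma$-special point of $Z_\psi$. For (b): the closure $T:=\langle W\rangle_{\ws}$ is $\Sigma$-special (as $W$ is a $\Sigma$-subvariety), and $T=\phi(\widetilde T)$ for an irreducible component $\widetilde T$ of $\psi^{-1}(z)$ which, since $\phi$ has finite fibres, is also an irreducible component of $\phi^{-1}(T)$; Proposition \ref{prop:permanencesigmaspecial}(3) then gives that $\widetilde T$ is $\Sigma$-special in $Y_\phi$, and Proposition \ref{prop:permanencesigmaspecial}(1) gives that $\{z\}=\psi(\widetilde T)$ is $\Sigma$-special in $Z_\psi$.

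It therefore suffices to show $\mathcal{Z}$ is finite, which will be the contradiction. Choosing for each of the infinitely many distinct closures one realizing parameter, we may assume there is, for each $z\in\mathcal{Z}$, a $\Sigma$-optimal $W_z$ with the $\langle W_z\rangle_{\ws}$ pairwise distinct, so $z\mapsto W_z$ is injective. A short Noetherian argument shows that every irreducible component of $\overline{\mathcal{Z}}$ contains a Zariski dense subset of $\mathcal{Z}$; since $\mathcal{Z}$ is infinite, some component $C$ of $\overline{\mathcal{Z}}$ has $\dim C\ge 1$. By (a), $C$ is a subvariety of $Z_\psi\in\mathcal{S}$ of dimension $\le m$ containing a Zariski dense set of $\Sigma$-special points, so $\WML(Z_\psi,\Sigma,m)$ forces $C$ to be weakly special.

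The crux, and the main obstacle, is to contradict the weak optimality of the $W_z$ for $z\in C$. Using a finite iteration of the pigeonhole principle together with the Noetherian-plus-$\WML$ step above — each iteration either strictly lowering the dimension of the weakly special subvariety of $Z_\psi$ in play, or producing the configuration below — I would arrange that: $\mathcal{Z}\cap C$ is dense in $C$; there is a weakly special subvariety $Y_0\subset Y_\phi$ with $\overline{\psi(Y_0)}=C$ such that for $z$ in a dense subset of $C$ there is an irreducible component $\widetilde T_z$ of the fibre $(\psi|_{Y_0})^{-1}(z)$ with $\langle W_z\rangle_{\ws}=\phi(\widetilde T_z)$; the dimension of $\langle W_z\rangle_{\ws}$ equals the generic fibre dimension $\dim Y_0-\dim C$; and $\dim W_z$ is constant. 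Using that irreducible components of preimages and images of weakly special subvarieties under distinguished morphisms are weakly special (\cite{BD2}), $\phi(Y_0)$ is a weakly special subvariety of $Z$ of dimension $\dim Y_0$ containing $W_z$ for all these $z$; let $U$ be the component of $\phi(Y_0)\cap V$ containing $W_z$ for a dense set of $z$. Since $U$ contains infinitely many distinct $W_z$ of equal dimension, $W_z\subsetneq U$; lifting $W_z$ into $(\psi|_{Y_0})^{-1}(z)\subset Y_0$ along $\phi$ and applying a standard fibre-dimension count to $\psi|_{Y_0}$ over $C$ gives $\dim U\ge\dim C+\dim W_z$. Since $\langle U\rangle_{\ws}\subset\phi(Y_0)$, we get
\[\delta_{\ws}(U)\le\dim\langle U\rangle_{\ws}-\dim U\le\dim Y_0-\dim U\le(\dim Y_0-\dim C)-\dim W_z=\delta_{\ws}(W_z),\]
which contradicts the weak optimality of $W_z$ because $W_z\subsetneq U\subset V$. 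The delicate part, as indicated, is precisely the bookkeeping in this last step: iterating pigeonhole and $\WML$ so as to land on a single weakly special $C$ with a single weakly special $Y_0$ above it along which the family $z\mapsto W_z$ genuinely varies, and then reading off the inequality $\dim U\ge\dim C+\dim W_z$ from the fibres of $\psi|_{Y_0}$.
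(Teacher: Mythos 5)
Your overall strategy is sound and genuinely different from the paper's: you argue by contradiction, parametrize the infinitely many $\Sigma$-optimal subvarieties by points $z$ of the auxiliary variety $Z_\psi$ supplied by \ref{ax:5}, show these $z$ are $\Sigma$-special points of $Z_\psi$ (your step (b) is correct and mirrors a step in the paper's proof), and apply $\WML(Z_\psi,\Sigma,m)$ to the Zariski closure of the parameter set. The paper instead proves the equivalent Theorem \ref{thm:AD2} by induction on $m$: for a single proper $\Sigma$-optimal $W$ it shows, via a defect computation that uses the weak optimality of $W$ directly, that the image $V'$ of (a lift of) $V$ in $Z_\psi$ is \emph{not} weakly special, and then invokes $\WML$ contrapositively to confine the $\Sigma$-special points of $V'$ --- in particular the single parameter point --- to finitely many proper subvarieties of $V'$, whose preimages give a proper closed subset of $V$ containing $W$ and independent of $W$; induction then finishes. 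The paper's route never has to control a positive-dimensional family of fibers, which is precisely where your version gets hard.

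The genuine gap is exactly where you flag it. The whole contradiction rests on producing a single weakly special $Y_0\subset Y_\phi$ with $\overline{\psi(Y_0)}=C$, $\dim Y_0-\dim C=\dim\langle W_z\rangle_{\ws}$, and $\widetilde T_z\subset Y_0$ for a \emph{dense} set of $z\in C$, and this is asserted (``I would arrange that\dots'') rather than proved. The difficulties are real: (i) the natural candidate $\overline{\bigcup_z\widetilde T_z}$ need not be weakly special (closures of unions of weakly special subvarieties are not weakly special in general), while an irreducible component of $\psi^{-1}(C)$ is weakly special but a priori contains $\widetilde T_z$ only for infinitely many, not densely many, $z$; every pigeonhole step that downgrades ``dense'' to ``infinite'' forces you to re-close and re-apply $\WML$, so you must set up an actual descent (your ``each iteration either strictly lowering the dimension \dots or producing the configuration'') and prove that it terminates in the desired configuration rather than cycling; (ii) the same issue arises in choosing the component $U$ of $\phi(Y_0)\cap V$; (iii) the identity $\dim Y_0-\dim C=\dim\langle W_z\rangle_{\ws}$ and the bound $\dim U\ge\dim C+\dim W_z$ each require the fiber-dimension theorem for the \emph{restricted} morphism $\psi|_{Y_0}$ (resp.\ its restriction to the closure of the union of the lifts of the $W_z$), where equidimensionality holds only over a dense open locus, so you must also show that your dense set of good $z$ meets that locus. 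None of these obstacles looks fatal, but together they constitute the hardest part of the argument, and as written the proposal is a program rather than a proof. I would either carry out the descent in full or switch to the paper's inductive formulation, in which the single point $x\in V'$ replaces the family $\{z\}_{z\in C}$ and all of this bookkeeping disappears.
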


The above theorem can be reformulated as follows.

\begin{thm}\label{thm:AD2}		Let $\Sigma$ be a $\Sigma$-orbit in a distinguished category satisfying \ref{ax:5}. Let $Z$ be a distinguished variety and let $\mathcal{S}$ be a class of distinguished varieties such that $Z \in \mathcal{S}$ and for every $X' \in \mathcal{S}$ and every subvariety $V$ of $X'$, the set in \ref{ax:5} can be chosen with all $Y_\phi,Z_\psi$ belonging to $\mathcal{S}$. Fix an integer $m\geq 0$. Assume $\WML(X',\Sigma,m)$ holds for every $X' \in \mathcal{S}$. Let $V$ be a subvariety of $Z$ of dimension at most $m$. There exists a finite collection $\Delta$ of $\Sigma$-special subvarieties of $Z$ such that every proper $\Sigma$-optimal subvariety of $V$ is contained in a subvariety from $\Delta$ and no subvariety from $\Delta$ contains $V$.
\end{thm}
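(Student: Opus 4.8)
The plan is to deduce Theorem \ref{thm:AD2} directly from Theorem \ref{thm:AD1}, whose hypotheses are literally the same. First I would apply Theorem \ref{thm:AD1} to get that there are only finitely many $\Sigma$-optimal subvarieties for $V$ in $Z$; let $W_1,\dots,W_\ell$ be those among them that are \emph{proper} subvarieties of $V$, and put $\Delta := \{\langle W_1\rangle_{\ws},\dots,\langle W_\ell\rangle_{\ws}\}$. It then remains to verify the three properties asked for in the statement.

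The finiteness of $\Delta$ is immediate. Each $\langle W_i\rangle_{\ws}$ is a $\Sigma$-special subvariety of $Z$ because, by definition, a $\Sigma$-optimal subvariety is in particular a $\Sigma$-subvariety, i.e.\ its weakly special closure is $\Sigma$-special. By construction, every proper $\Sigma$-optimal subvariety $W$ of $V$ is one of the $W_i$ and hence is contained in $\langle W_i\rangle_{\ws}\in\Delta$. Finally, to see that no member of $\Delta$ contains $V$, suppose $V\subset\langle W_i\rangle_{\ws}$ for some $i$. Since $\langle W_i\rangle_{\ws}$ is weakly special, it then contains $\langle V\rangle_{\ws}$; on the other hand $W_i\subset V$ gives $\langle W_i\rangle_{\ws}\subset\langle V\rangle_{\ws}$, so $\langle W_i\rangle_{\ws}=\langle V\rangle_{\ws}$. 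Using $\dim W_i\leq\dim V$ we then get $\delta_{\ws}(W_i)=\dim\langle V\rangle_{\ws}-\dim W_i\geq\dim\langle V\rangle_{\ws}-\dim V=\delta_{\ws}(V)$, which contradicts the weak optimality of $W_i$ for $V$ in $Z$ applied with the intermediate subvariety $U=V$ (legitimate since $W_i\subsetneq V$). Hence no subvariety from $\Delta$ contains $V$, and the argument is complete.

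I do not anticipate a real obstacle here: all the substance is in Theorem \ref{thm:AD1}, and what is left is the elementary remark that a proper weakly optimal subvariety cannot have a weakly special closure as large as $\langle V\rangle_{\ws}$. The one genuinely delicate point is the role of the word ``proper'': the subvariety $V$ itself may well be $\Sigma$-optimal (precisely when $\langle V\rangle_{\ws}$ is $\Sigma$-special), but it is deliberately left out of $\Delta$, which is exactly what makes the last requirement satisfiable. For completeness I would also note the converse, which makes the word ``reformulation'' literal: an induction on $\dim V$, at each stage replacing $V$ by the irreducible components of its intersections with the members of $\Delta$ — these have strictly smaller dimension, since no member of $\Delta$ contains $V$, and both weak optimality and the $\Sigma$-subvariety property are inherited when passing to an intermediate subvariety — recovers the finiteness assertion of Theorem \ref{thm:AD1}.
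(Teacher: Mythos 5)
Your deduction of Theorem \ref{thm:AD2} from Theorem \ref{thm:AD1} is, as a logical implication, correct, and it coincides with the paper's own remark on the equivalence of the two statements: a proper $\Sigma$-optimal $W\subset V$ is contained in $\langle W\rangle_{\ws}$, which is $\Sigma$-special because $W$ is a $\Sigma$-subvariety, and cannot contain $V$, since $V\subset\langle W\rangle_{\ws}$ would force $\langle W\rangle_{\ws}=\langle V\rangle_{\ws}$ and hence $\delta_{\ws}(W)\geq\delta_{\ws}(V)$, contradicting weak optimality of $W$ applied with $U=V$. Your converse sketch (induction on dimension, replacing $V$ by components of $V\cap S$ for $S\in\Delta$) likewise matches the paper's argument that Theorem \ref{thm:AD2} implies Theorem \ref{thm:AD1}.

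The gap is that this is circular in the context of the paper: Theorem \ref{thm:AD1} has no proof independent of Theorem \ref{thm:AD2}. The paper establishes the two-way equivalence and then proves Theorem \ref{thm:AD2} \emph{directly}, deriving Theorem \ref{thm:AD1} from it; you have inverted this and assumed exactly the statement whose only available proof passes through the one you are asked to prove. Your closing remark that ``all the substance is in Theorem \ref{thm:AD1}'' is therefore backwards. The actual content of the theorem is the direct argument: an induction on $m$ in which one applies \ref{ax:5} to $\langle W\rangle_{\ws}$ to obtain finitely many pairs $(\phi,\psi)$ with $\psi:Y\to Z$ of finite fibers and $\phi:Y\to X$ with $X\in\mathcal{S}$, lifts $W$ and $V$ to $\tilde W\subset\tilde V\subset Y$, shows via the Fiber Dimension Theorem and a defect computation that the closure $V'$ of $\phi(\tilde V)$ cannot be weakly special (else $\delta_{\ws}(V)\leq\delta_{\ws}(W)$, contradicting that $W$ is a proper weakly optimal subvariety), and then invokes $\WML(X,\Sigma,m)$ at the $\Sigma$-special point $x=\phi(\tilde W)$ of $V'$ to trap $W$ in a proper closed subset of $V$ depending only on $(\phi,\psi)$ and $V$. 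None of this appears in your proposal, so as written it does not prove the theorem.
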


Let us first see that these two statements are equivalent. Theorem \ref{thm:AD1} implies Theorem \ref{thm:AD2} because any proper $\Sigma$-optimal subvariety of $V$ is contained in its weakly special closure, which is a $\Sigma$-special subvariety of $Z$ that does not contain $V$.

The converse is less trivial. We proceed by induction on $m$ and the base of the induction is trivial.

Let $V$ be a subvariety of $Z$ of dimension at most $m$ and let $W$ be a proper $\Sigma$-optimal subvariety of $V$ and $\Delta$ be the set of $\Sigma$-special subvarieties given by Theorem \ref{thm:AD2} for $V$. Let $S$ be a $\Sigma$-special subvariety from $\Delta$ such that $W\subset S$. Let $W'$ be a component of $S\cap V$ containing $W$. We know that $W' \neq V$ since $V \not\subset S$. Now $W$ is also $\Sigma$-optimal for $W'$ in $Z$ and we obtain the claim by induction.

We are now ready to prove Theorem \ref{thm:AD2}.

\begin{proof}[Proof of Theorem \ref{thm:AD2}]
We induct on $m$, the base case being trivial. Let $V$ be a subvariety of $Z$ of dimension $m$.
 For every proper $\Sigma$-optimal subvariety for $V$, its weakly special closure is a $\Sigma$-special subvariety of $Z$ that does not contain $V$. By induction, it therefore suffices to show that there exists some proper closed subset of $V$ that contains every proper $\Sigma$-optimal subvariety of $V$. Let $W$ be one such proper $\Sigma$-optimal subvariety for $V$.

By \ref{ax:5} there are distinguished morphisms $\psi:Y\to Z$ and $\phi:Y\to X$, coming from a finite set, independent of $W$, and $x\in X(K)$ such that $\langle W\rangle_{\ws}$ is a component of $\psi(\phi^{-1}(x))$ and $\psi $ has finite fibers. Note that, by our hypothesis, $\WML(X,\Sigma, m)$ holds.

 We are going to prove that $\psi,\phi$, and $V$ determine a finite set of proper subvarieties of $V$ whose union contains $W$. This will give the claim that is needed to apply the induction hypothesis because $\psi$ and $\phi$ come from a finite set that does not depend on $W$.

Let $U_1, \dots , U_l $ be the components of $\phi^{-1}(x)$. We may assume that $U_1$ dominates $\langle W\rangle_{\ws}$. Since $\psi$ has finite fibers we have $\dim \psi(U_1)=\dim \langle W\rangle_{\ws}$ and thus $U_1$ is a component of $\psi^{-1}(\langle W \rangle_{\ws} )$. We have moreover that $\psi(U_1)= \langle W \rangle_{\ws}$ and that $U_1$ and $\{x\}$ are $\Sigma$-special by Proposition \ref{prop:permanencesigmaspecial}.

We let now $\tilde{W}$ be a component of $\psi|_{U_1}^{-1}(W)$ with $\dim \tilde{W}=\dim W$ and $\tilde{V}$ be a component of $\psi^{-1}(V)$ that contains $\tilde{W}$. Since $U_1$ is weakly special, we have $\delta_{\ws}(\tilde{W})\leq \delta_{\ws}(W)$.

We may assume that $\dim \tilde{V} = \dim V$ since otherwise the closure of $\psi (\tilde{V})$ would be a proper closed subset of $V$ containing $W$ and we would be done by induction.  It follows that $\tilde{W} \neq \tilde{V}$ and that $V$ is contained in the closure of $\psi(\tilde{V})$. This implies that $V\subset \psi ( \langle \tilde{V}\rangle_{\ws}  )$ and thus $\delta_{\ws}(V)\leq \delta_{\ws}(\tilde{V})$.

Let $V'$ denote the closure of $\phi(\tilde{V})$ and recall that $\phi(\tilde{W})=\{x\}$. Suppose first that $\dim \tilde{W} > \dim \tilde{V} - \dim V'$. By the Fiber Dimension Theorem (see Theorem 7.3 of \cite{BD2}), $\tilde{W}$ is contained in a proper closed subset of $\tilde{V}$ independent of $\tilde{W}$ and the same holds for $W$ and $V$. We are then done as before.

From now on, we assume that $\dim \tilde{W} \leq \dim \tilde{V} - \dim V'$. We want to prove by way of contradiction that $V'$ is not weakly special. Assume therefore that $V'$ is weakly special, and let $\hat{V}$ be a component of $\phi^{-1}(V') $ containing $\tilde{V}$. Then, $\hat{V}$ is weakly special (thus $\langle \tilde{W} \rangle_{\ws}\subset \hat{V}$) and we can apply Lemmas 7.5 and 7.6 of \cite{BD2} to $\phi|_{\hat{V}}:\hat{V}\to V'$, $\{x\}$ and $ \langle \tilde{W} \rangle_{\ws}$ (recall that $\langle \tilde{W} \rangle_{\ws}$ is a component of $\phi^{-1}(\{x\})$), and obtain $\dim \hat{V}-\dim V'= \dim \langle \tilde{W} \rangle_{\ws}$.

We use now this equality and what we showed before to deduce that
\begin{multline*}
\delta_{\ws}(V) \leq \delta_{\ws}(\tilde{V}) \leq \dim \hat{V}-\dim \tilde{V}\leq  \dim \hat{V} - \dim V' - \dim \tilde{W}\\
=\dim \langle \tilde{W} \rangle_{\ws}- \dim \tilde{W} =\delta_{\ws}(\tilde{W})\leq \delta_{\ws}(W).
\end{multline*}
This contradicts the fact that $W $ is a proper weakly optimal subvariety for $V$. Thus $V'$ is not weakly special.

We can now finally use $\WML(X,\Sigma, m)$ and conclude the proof.
We have that $\{x\}\subset V'$ and $x$ is a $\Sigma$-special point of $X$. Since $V'$ is not weakly special and $\dim V'\leq \dim \tilde{V}=\dim V=m$, it follows from $\WML(X,\Sigma, m)$ that $\{x\} \subset \bigcup_{i=1}^{n}{T_i}$, where each $T_i$ is a proper subvariety of $V'$ ($i = 1, \hdots, n$). At least one of the $T_i$, say $T_1$, must contain $x$ and therefore
\[
\tilde{W} \subset \phi^{-1}(x) \cap \tilde{V} \subset  \phi^{-1}(T_1) \cap \tilde{V} \subsetneq \tilde{V}
\]
and we are done again because the closure of $ \psi (\phi^{-1}(T_1) \cap \tilde{V} )$, which is independent of $W$, is a proper closed subset of $V$ that contains it.
\end{proof}

\begin{cor}
Let $\Sigma$ be a $\Sigma$-orbit in a distinguished category satisfying \ref{ax:5}. Let $Z$ be a distinguished variety and let $\mathcal{S}$ be a class of distinguished varieties such that $Z \in \mathcal{S}$ and for every $X' \in \mathcal{S}$ and every subvariety $V$ of $X'$, the set in \ref{ax:5} can be chosen with all $Y_\phi,Z_\psi$ belonging to $\mathcal{S}$. Fix an integer $m\geq 0$. Assume $\WML(X',\Sigma,m)$ holds for every $X' \in \mathcal{S}$. Then, $\SML(Z,\Sigma,m)$ holds.
\end{cor}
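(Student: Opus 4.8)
The plan is to deduce the corollary formally from Theorem \ref{thm:AD1} and Remark \ref{rmk:sigmaatypical}, whose hypotheses coincide verbatim with those of the corollary. Fix a subvariety $V \subset Z$ with $\dim V \leq m$; by Definition \ref{defn:SML} we must show that $V$ contains only finitely many maximal $\Sigma$-special subvarieties. Since $Z \in \mathcal{S}$, the class $\mathcal{S}$ is compatible with \ref{ax:5} in the stated sense, $\WML(X',\Sigma,m)$ holds for all $X' \in \mathcal{S}$, and the category satisfies \ref{ax:5}, Theorem \ref{thm:AD1} applies and yields a \emph{finite} collection of $\Sigma$-optimal subvarieties for $V$ in $Z$.

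Next I would invoke Remark \ref{rmk:sigmaatypical}: since the ambient distinguished category satisfies \ref{ax:5}, every maximal $\Sigma$-special subvariety of $V$ is $\Sigma$-optimal for $V$ in $Z$. (This is the only non-formal input; it rests on Corollary \ref{cor:sigmaspeccont}. Concretely, a $\Sigma$-special subvariety $W$ of $V$ is weakly special by Definition \ref{defn:sigmaspecial2}, hence $\langle W \rangle_{\ws} = W$ and $\delta_{\ws}(W) = 0$, so $W$ is trivially a $\Sigma$-subvariety; and if some $U$ with $W \subsetneq U \subset V$ had $\delta_{\ws}(U) \leq \delta_{\ws}(W) = 0$, then $U$ would be weakly special, hence $\Sigma$-special by Corollary \ref{cor:sigmaspeccont}, contradicting the maximality of $W$ — so $W$ is weakly optimal for $V$ in $Z$ and therefore $\Sigma$-optimal.) Consequently the finite list produced by Theorem \ref{thm:AD1} already contains every maximal $\Sigma$-special subvariety of $V$, so there are only finitely many of them. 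As $V$ was an arbitrary subvariety of $Z$ of dimension at most $m$, this is exactly $\SML(Z,\Sigma,m)$.

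I do not expect any genuine obstacle: the corollary is a packaging of Theorem \ref{thm:AD1} (finiteness of $\Sigma$-optimal subvarieties) with the inclusion ``maximal $\Sigma$-special $\Rightarrow$ $\Sigma$-optimal'' recorded in Remark \ref{rmk:sigmaatypical}. The only point deserving a sentence in the write-up is to recall \emph{why} that inclusion holds, i.e. to cite Corollary \ref{cor:sigmaspeccont} for the weak optimality of a maximal $\Sigma$-special subvariety, as sketched above.
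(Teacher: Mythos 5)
Your proposal is correct and follows essentially the same route as the paper: apply Theorem \ref{thm:AD1} and then check that every maximal $\Sigma$-special subvariety of $V$ is $\Sigma$-optimal, which reduces (exactly as in Remark \ref{rmk:sigmaatypical} and the paper's own proof) to the observation that a weakly special $U$ with $W \subsetneq U \subset V$ would be $\Sigma$-special by Corollary \ref{cor:sigmaspeccont}, contradicting maximality. The parenthetical justification you give for weak optimality is precisely the paper's argument, so there is nothing to add.
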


\begin{proof} Let $V$ be a subvariety of $Z$ of dimension at most $m$ and let $W$ be a maximal $\Sigma$-special subvariety of $V$. By Theorem \ref{thm:AD1} it is enough to prove that $W$ is weakly optimal for $V$ in $Z$. If this were not the case, then $W$ would be strictly contained in a weakly special subvariety of $V$, which must be $\Sigma$-special by Corollary \ref{cor:sigmaspeccont}. This contradicts the fact that $W$ is maximal $\Sigma$-special in $V$.
\end{proof}

The set $\mathcal{S}$ in Theorem \ref{thm:AD1} is a nuisance. We now prove two lemmas that allow us to get rid of it at the cost of assuming that $\SML(Z,\Sigma,m)$ holds for all $m \geq 0$.

\begin{lem}\label{lem:strongax5}
In a distinguished category satisfying \ref{ax:5}, the distinguished morphisms $(\phi,\psi)$ in \ref{ax:5} can be chosen such that each $\psi$ is surjective.
\end{lem}

\begin{proof}
Given $\phi: Y_\phi \to X$ and $\psi: Y_\phi \to Z_\psi$ such that $\phi$ has finite fibers, we apply \ref{ax:4} to find distinguished morphisms $\phi_1: V_\phi \to Y_\phi$, $\psi_1: V_\phi \to U_\psi$, and $\psi_2: U_\psi \to Z_\psi$ such that $\psi \circ \phi_1 = \psi_2 \circ \psi_1$, $\phi_1$ is finite and surjective, $\psi_1$ is surjective, and $\psi_2$ has finite fibers.

Let $W$ be an irreducible component of $\phi(\psi^{-1}(z))$ for some $z \in Z_\psi(K)$. Note that
\[ \phi(\psi^{-1}(z)) = (\phi \circ \phi_1)((\psi \circ \phi_1)^{-1}(z)) = (\phi \circ \phi_1)((\psi_2 \circ \psi_1)^{-1}(z)) = \bigcup_{u \in \psi_2^{-1}(z)}{(\phi \circ \phi_1)(\psi_1^{-1}(u))},\]
where the last union of closed subsets is finite since $\psi_2$ has finite fibers. Hence, $W$ is also an irreducible component of $(\phi \circ \phi_1)(\psi_1^{-1}(u))$ for some $u \in U_\psi(K)$. Therefore, we can replace $(\phi,\psi)$ by $(\phi \circ \phi_1, \psi_1)$, where $\phi \circ \phi_1$ has finite fibers and $\psi_1$ is surjective.
\end{proof}

\begin{lem}\label{lem:smlinherited}
Let $\Sigma$ be a $\Sigma$-orbit and let $X$ be a distinguished variety such that, for all integers $m \geq 0$, $\SML(X,\Sigma,m)$ holds. Let $\phi: Y \to X$ be a distinguished morphism with finite fibers and let $\psi: Y \to Z$ be a surjective distinguished morphism. Then $\SML(Y,\Sigma,m)$ and $\SML(Z,\Sigma,m)$ hold for all integers $m \geq 0$.
\end{lem}

\begin{proof}
We first prove that $\SML(Y,\Sigma,m)$ holds for all integers $m \geq 0$. We proceed by induction on $m$, the case $m = 0$ being trivial. So let $V \subset Y$ be a subvariety of dimension $m$ and let $S \subset V$ be a maximal $\Sigma$-special subvariety. Let $W$ denote the closure of $\phi(V)$. Then, by Proposition \ref{prop:permanencesigmaspecial}, $\phi(S)$ is a $\Sigma$-special subvariety of $W$ and therefore contained in a maximal $\Sigma$-special subvariety $S'$ of $W$, which must come from a finite set since $\SML(X,\Sigma,m)$ holds for all $m$.

 We have $S \subset \phi^{-1}(S')$, so $S$ is contained in some component $S''$ of $\phi^{-1}(S')$. We first assume that $S'' \not\subset V$. Since $\phi$ has finite fibers, we have $\dim S'' \leq \dim S' \leq \dim W = \dim V$. Hence, we cannot have $V \subset S''$ because otherwise $S'' = V$, a contradiction. So $S'' \cap V$ is a proper closed subset of $V$ that contains $S$ and we are again done by induction (note that $S'$ and therefore also $S''$ vary in finite sets respectively).

We can therefore assume that $S'' \subset V$. It follows from the maximality of $S$ that $S = S''$ and so $S$ lies in a finite set and we are done.

We now prove that $\SML(Z,\Sigma,m)$ holds for all integers $m \geq 0$. So let $V \subset Z$ be a subvariety. If $S \subset V$ is a maximal $\Sigma$-special subvariety, then every component of $\psi^{-1}(S)$ is a maximal $\Sigma$-special subvariety of some component of $\psi^{-1}(V)$, which is enough to conclude.
\end{proof}

\section{Applications}\label{sec:applications}

\subsection{Semiabelian varieties} 
The following theorem is one formulation of the Mordell-Lang conjecture for semiabelian varieties.

\begin{thm}[Mordell-Lang for semiabelian varieties, \cite{McQ}, Conjecture 1] \label{thm:MLsemi}
	Fix a semiabelian variety $G$ over $K$ and let $\Gamma$ be a finite rank subgroup of $G(K)$.
	Let $V\subset G$ be any subvariety such that $V(K)\cap \Gamma$ is Zariski dense in $V$. Then $V$ is a coset of a semiabelian subvariety of $G$.
\end{thm}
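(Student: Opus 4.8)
Theorem \ref{thm:MLsemi} is McQuillan's theorem \cite{McQ}, built on work of Faltings, Vojta, and Hindry; the honest plan is to cite it rather than to reprove it, and what follows only indicates how the ingredients fit together. In the terminology of this paper it is essentially the assertion $\WML(G,\Sigma,\dim G)$ for the $\Sigma$-orbit $\Sigma$ with $\Gamma \subset \Sigma_G$ constructed in Subsection \ref{subsec:semiab1}, so once it is known for all semiabelian $G$, Theorem \ref{thm:AD1} upgrades it to the corresponding strong (finiteness) statement for free.

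The first step is to reduce to the case that $\Gamma$ is finitely generated. One picks a finitely generated subgroup $\Gamma_0 \subset \Gamma$ with $\Gamma \subset \Gamma_0^{\mathrm{div}}$ and observes that for each integer $N \geq 1$ the preimage $\Gamma_N := \{ g \in G(K) : Ng \in \Gamma_0 \}$ is still finitely generated (multiplication by $N$ is surjective on $K$-points with finite kernel) and that $\Gamma_0^{\mathrm{div}} = \bigcup_N \Gamma_N$; the finite-rank case is then teased out of the finitely generated case. This is Hindry's argument in the abelian setting, carried over to semiabelian varieties in \cite{McQ}, and it is subtler than it looks, since one must control the Zariski closures of the sets $V(K) \cap \Gamma_N$ uniformly in $N$.

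The second step is to normalize $V$: replacing $V$ by $V - \gamma$ for a suitable $\gamma \in \Gamma$ we may assume $0 \in V$; replacing $G$ by the semiabelian subvariety generated by $V$ and $\Gamma$ by its intersection with it, we may assume $V$ generates $G$; and passing to $G / \operatorname{Stab}_G(V)$ we may assume $\operatorname{Stab}_G(V) = 0$. Each step preserves the Zariski density of $V(K) \cap \Gamma$, and the claim reduces to showing that after these normalizations $V$ is a point.

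The third step, and the only genuine obstacle, is the Diophantine input: a subvariety of a semiabelian variety meeting a finitely generated subgroup in a Zariski-dense set is a finite union of translates of semiabelian subvarieties. For $G$ an abelian variety this is Faltings' theorem, proved by Vojta's method (his arithmetic inequality together with the product theorem, used to bound the points of large N\'eron--Tate height); for a general semiabelian $G$ one invokes Vojta's extension of that method, the toric part being unable to be split off. Everything preceding this is formal bookkeeping; this step is where all the difficulty lies.
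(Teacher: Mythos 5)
The paper does not prove this theorem: it is stated as a citation of McQuillan's result \cite{McQ}, exactly as you propose, so your approach matches the paper's. Your sketch of the internal structure of McQuillan's proof (reduction from finite rank to finitely generated, normalization by stabilizers, and the Faltings--Vojta Diophantine input) is a fair summary but is not needed here, since the theorem is imported as a black box.
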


Given a semiabelian variety $G$, we have seen in Subsection \ref{subsec:semiab1} that $\Sigma_G$, for some $\Sigma $-orbit $\Sigma$, is contained in a finite rank subgroup of $G(K)$ and for any finite rank subgroup $\Gamma\subset G(K)$, there exists a $\Sigma $-orbit $\Sigma$ with $\Gamma \subset \Sigma_G$.
It is therefore quite evident that, for any integer $m\geq 0$, $\WML(G, \Sigma,m)$ for any $\Sigma$-orbit $\Sigma$ is equivalent to the statement of Theorem  \ref{thm:MLsemi} for any finite rank subgroup $\Gamma$ and any subvariety $V$ of dimension at most $m$.

Finally, we want to show that we can recover Theorem 1.8 of \cite{Aslanyan} for semiabelian varieties. 

Let $\Gamma$ be a finite rank subgroup of a semiabelian variety $G$. Recall that we have seen in Subsection \ref{subsec:semiab1} that there exists a $\Sigma$-orbit $\Sigma$ such that $\Gamma\subset \Sigma_G$. A $\Gamma$-special subvariety in the sense of \cite{Aslanyan} is $\Sigma$-special thanks to Lemma \ref{lem:specialimpliespointspecial}.

 Let $V\subset G$ be a subvariety. A maximal $\Gamma$-atypical subvariety of $V$ in the sense of \cite{Aslanyan}  is certainly $\Sigma$-atypical, but it is also maximal $\Sigma$-atypical. Indeed, let $W\subset W' \subset V$ with $W$ maximal $\Gamma$-atypical and $\Sigma$-atypical and $W'$ maximal $\Sigma$-atypical. As $\langle W\rangle_{\ws}\subset\langle W'\rangle_{\ws}$ and $\langle W\rangle_{\ws}$ contains a point of $\Gamma$, $W'$ is $\Gamma$-atypical. This implies that $W=W'$ and we have that $W$ is maximal $\Sigma$-atypical.

It is now clear that, by Proposition \ref{prop:wo} and the considerations of this subsection, our Theorem \ref{thm:AD1} and Theorem \ref{thm:MLsemi} imply Theorem 1.8 of \cite{Aslanyan} for semiabelian varieties.

\subsection{Connected pure Shimura varieties}

In this subsection, we work with the distinguished category of connected pure Shimura varieties.
Let us start by considering the case of products of modular curves and see how to recover Theorem 1.8 of \cite{Aslanyan} for $Y(1)^n_\mathbb{C}$ from our Theorem \ref{thm:AD1}.

Let $Z=Y(1)^n_\mathbb{C}$ for some positive integer $n$ and let $\Sigma = \Sigma(x)$ be a $\Sigma$-orbit for $x \in Y(1)^{n_0}(\mathbb{C})$ for some positive integer $n_0$. Recall that, in this setting, $\Sigma$-special subvarieties of $Z$ are $\Sigma_Z$-point-special by Proposition \ref{prop:pointShimura}. Recall furthermore that we determined $\Sigma_Z$ in Subsection \ref{sub:csv1}. Then, Theorem 6.2 of \cite{Pila_2014} (see also Theorem 3 in \cite{HP12} for an earlier result) implies $\SML(Z, \Sigma, m)$ for all $m\geq 0$. Therefore, also $\WML(Z, \Sigma, m)$ holds for all $m\geq 0$.

Arguing as in the previous subsection and using again Proposition \ref{prop:wo} as well as the considerations of Subsection \ref{sub:csv1}, we see that, given a structure of finite rank $\Gamma$, maximal $\Gamma$-atypical subvarieties of $V\subset Z=Y(1)_\mathbb{C}^n$ in the sense of \cite{Aslanyan} are $\Sigma$-optimal for $V$, where $\Sigma$ is a $\Sigma$-orbit such that $\Gamma \subset \Sigma_Z$.

Now, Theorem \ref{thm:AD1} would imply Theorem 1.8 of \cite{Aslanyan} for $Y(1)_{\mathbb{C}}^n$ if we could find a class $\mathcal{S}$ of distinguished varieties as described in the hypotheses of Theorem \ref{thm:AD1} and such that $\WML(X, \Sigma, m)$ holds for all $X$ in $\mathcal{S}$ and for all $m \geq 0$.

We directly get such a class $\mathcal{S}$ from Lemmas \ref{lem:strongax5} and \ref{lem:smlinherited} in the following way: we start with the set $\{Y(1)_{\mathbb{C}}^n; n \in \mathbb{N}\}$ and iteratively add distinguished varieties that admit a distinguished morphism with finite fibers to a distuingished variety in our set or that admit a surjective distinguished morphism from a distinguished variety from our set.

We now consider a general connected pure Shimura variety $Z$. Suppose that $\Sigma = \Sigma(x)$ for a point $x \in S(\mathbb{C})$, where $S$ is a connected pure Shimura variety. In Theorem 1.4 of \cite{Richard_Yafaev_24b}, Richard and Yafaev have proved that $\WML(Z_{\mathbb{C}},\Sigma,m)$ holds for all connected pure Shimura varieties $Z$ of abelian type and all $m$ (recall that in Subsection \ref{sub:csv1}, we showed the equivalence of the notions of hybrid orbit and $\Sigma$-orbit). This implies that also $\SML(Z_{\mathbb{C}},\Sigma,m)$ holds for all connected pure Shimura varieties $Z$ of abelian type and all $m$ (see Lemma 3.4 in \cite{AD22} and Lemma \ref{lem:wmlequivalentsml}). We can again use Lemmas \ref{lem:strongax5} and \ref{lem:smlinherited} to get a class $\mathcal{S}$ that contains all connected pure Shimura varieties of abelian type and satisfies the condition in Theorem \ref{thm:AD1}. Therefore, we obtain the following corollary of Theorem \ref{thm:AD1} together with the work of Richard and Yafaev.

\begin{cor}\label{cor:shimura}
	Suppose that $\Sigma = \Sigma(x)$ for a point $x \in S(\mathbb{C})$, where $S$ is a connected pure Shimura variety. Let $Z$ be a connected pure Shimura variety of abelian type and let $V\subset Z$ be a subvariety. Then there are at most finitely many $\Sigma$-optimal varieties for $V$ in $Z$.
\end{cor}

Recall from Subsection \ref{sub:csv1} that for every structure of finite rank $\widetilde{\Sigma} \subset Z(\mathbb{C})$ as defined in \cite{AD22}, there is a $\Sigma$-orbit $\Sigma = \Sigma(x)$ such that $\widetilde{\Sigma} \subset \Sigma_{Z_{\mathbb{C}}}$ and actually one can take $x$ to lie in $Z^n(\mathbb{C})$ for some $n \in \mathbb{N}$. Because of Proposition \ref{prop:pointShimura} and Remark \ref{rmk:sigmaatypical}, we then recover Theorem 1.2 of \cite{AD22}.

\subsection{Powers of elliptic schemes}

Let $\pi: \mathcal{E} \to Y(2) = \mathbb{A}^1_{\bar{\mathbb{Q}}} \backslash\{0,1\}$ denote the Legendre family of elliptic curves over $\bar{\mathbb{Q}}$, defined in $\mathbb{P}^2_{\bar{\mathbb{Q}}} \times Y(2)$ by the equation $Y^2Z = X(X-Z)(X-\lambda Z)$, where $[X:Y:Z]$ are the projective coordinates on $\mathbb{P}^2_{\bar{\mathbb{Q}}}$ and $\lambda$ is the affine coordinate on $Y(2)$. For $n \in \mathbb{N}$, we denote by $\mathcal{E}^{(n)}$ the $n$-th fibered power of $\mathcal{E}$ over $Y(2)$ and by $\mathcal{E}^n$ the $n$-th direct power of $\mathcal{E}$ over $\bar{\mathbb{Q}}$. By abuse of notation, we also use $\pi$ to denote the projections $\mathcal{E}^{(n)} \to Y(2)$ and $\mathcal{E}^n \to Y(2)^n$. All the $\mathcal{E}^{(n)}$, $\mathcal{E}^n$, and $Y(2)^n$ are connected mixed Shimura varieties of Kuga type over $\bar{\mathbb{Q}}$ (see Section 13 of \cite{BD2}).

We fix a point $x = (x_1,\hdots,x_r)\in \mathcal{E}^r(K)$ for some algebraically closed field $K \supset \bar{\mathbb{Q}}$ and some $r \in \mathbb{N}$ and let $\Sigma=\Sigma(x)$ in the distinguished category of connected mixed Shimura varieties of Kuga type over $K$. We also let $E$ denote an arbitrary elliptic curve with complex multiplication over $\bar{\mathbb{Q}}$. Note that $E^n$ is a connected mixed Shimura variety of Kuga type for all $n \in \mathbb{N}$.

We claim that, for $S$ equal to $\mathcal{E}^{(n)}_K$, $\mathcal{E}^n_K$, $Y(2)^n_K$, and $E_K^n$ respectively, the $\Sigma$-special subvarieties of $S$ can be explicitly described like this: first, a subvariety $Y$ of $Y(2)_K^n$ is $\Sigma$-special if and only if there exist $s,t \in \mathbb{Z}_{\geq 0}$ such that, after a permutation of the coordinates, $Y$ becomes equal to $Y_1 \times \cdots \times Y_s \times \{(y_1,\hdots,y_t)\}$, where for every $i = 1, \hdots, s$, $Y_i$ is a $1$-dimensional irreducible component of the preimage in $Y(2)_K^{m_i}$ under the canonical Shimura morphism $Y(2)_K^{m_i} \to Y(1)_K^{m_i}$ of the common zero locus in $Y(1)_K^{m_i}$ of a finite set of modular polynomials evaluated at pairs of distinct coordinates, $\sum_{i=1}^{s}{m_i} + t = n$, and for every $i = 1,\hdots, t$, either $y_i \in Y(2)(K)$ is special or $\mathcal{E}_{y_i}$ is isogenous to $\mathcal{E}_{\pi(x_j)}$ for some $j \in \{1,\hdots,r\}$. Second, a subvariety $Z$ of $\mathcal{E}_K^n$ is $\Sigma$-special if and only if $\pi(Z)$ is a $\Sigma$-special subvariety of $Y(2)_K^n$ and, after a permutation of the coordinates, $Z$ is equal to $Z_0 \times Z_1$ where $Z_0 \subseteq \mathcal{E}_K^{n-s}$ and $Z_1 \subseteq \mathcal{E}^s_K$ ($0 \leq s \leq n$) and the geometric generic fiber of $Z_0 \to \pi(Z_0)$ is a union of irreducible components of algebraic subgroups while $\pi(Z_1) = \{(y_1, \hdots, y_s)\}$ for some $y_1, \hdots, y_s \in Y(2)(K)$ and $Z_1$ is a translate of an abelian subvariety of $\mathcal{E}_{y_1} \times \cdots \times \mathcal{E}_{y_s}$ by a point $(z_1, \hdots, z_s)$ such that for all $i = 1,\hdots, s$, there exist $N_i \in \mathbb{N}$ and homomorphisms of algebraic groups $\phi_{i,j}: \mathcal{E}_{\pi(x_j)} \to \mathcal{E}_{y_i}$ ($j = 1,\hdots, r$) such that $N_iz_i = \sum_{j=1}^{r}{\phi_{i,j}(x_j)}$ (note that $\phi_{i,j} = 0$ is allowed). Finally, a subvariety of $\mathcal{E}_K^{(n)}$ or of $E_K^n$ is $\Sigma$-special if and only if it is $\Sigma$-special when considered as a subvariety of $\mathcal{E}_K^n$ (where we identify $E$ with some fiber of $\mathcal{E} \to Y(2)$ that is isomorphic to it -- the choice of the fiber does not matter).

Indeed, this claim follows from Lemma \ref{lem:sigmaspecialchar} together with the characterization of special subvarieties of products of modular curves in Propositions 2.1 and 3.1 in \cite{Edixhoven_2005} and the characterization of Shimura subdata of connected mixed Shimura data of Kuga type in Proposition 1.2.16 in \cite{GDiss}.

\begin{thm}\label{thm:MLlegendre}
	Recall that $\Sigma = \Sigma(x)$ for a fixed point $x = (x_1,\hdots,x_r) \in \mathcal{E}^r(K)$. Let $n \in \mathbb{N}$ and let $m$ be a non-negative integer. Then $\WML(\mathcal{E}^{(n)}_K,\Sigma,m)$ holds if one of the following conditions is satisfied:
	\begin{enumerate}
		\item $x_1,\hdots,x_r$ are all special points, or
		\item $K = \bar{\mathbb{Q}}$ and for every $i = 1,\hdots,r$, if $\mathcal{E}_{\pi(x_i)}$ has complex multiplication, then $x_i$ is a special point in $\mathcal{E}(\bar{\mathbb{Q}})$, or
		\item $m \leq 1$.
	\end{enumerate}
\end{thm}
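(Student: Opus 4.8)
The plan is to take a subvariety $V\subseteq \mathcal{E}^{(n)}_K$ with $\dim V\le m$ that contains a Zariski-dense set $\Xi$ of $\Sigma$-special points and to show that $V$ is weakly special, using the explicit description of $\Sigma$-special points recalled just above: a $\Sigma$-special point of $\mathcal{E}^{(n)}_K$ is a tuple $(\lambda,P_1,\hdots,P_n)$ with all $P_i$ in the fiber $\mathcal{E}_\lambda$ such that $\lambda$ is a special point of $Y(2)$ or $\mathcal{E}_\lambda$ is isogenous to some $\mathcal{E}_{\pi(x_j)}$, and such that for each $i$ there are $N_i\in\mathbb{N}$ and homomorphisms $\phi_{i,j}\colon\mathcal{E}_{\pi(x_j)}\to\mathcal{E}_\lambda$ with $N_iP_i=\sum_{j=1}^r\phi_{i,j}(x_j)$. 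First I would split according to $\dim\pi(V)\in\{0,1\}$, where $\pi\colon\mathcal{E}^{(n)}_K\to Y(2)_K$. If $\pi(V)=\{\lambda_0\}$, then $V$ lies in the abelian variety $\mathcal{E}_{\lambda_0}^n$, and the displayed relations show that every point of $\Xi$ has all of its coordinates in the division group of the group $\Lambda$ generated by the finitely many elements $\phi(x_j)$ ($1\le j\le r$, $\phi\in\Hom(\mathcal{E}_{\pi(x_j)},\mathcal{E}_{\lambda_0})$); thus $\Xi$ is contained in the finite rank subgroup $(\Lambda^{\mathrm{div}})^n$ of $\mathcal{E}_{\lambda_0}^n(K)$, so Theorem \ref{thm:MLsemi} applies and shows that $V$ is a coset of an abelian subvariety of $\mathcal{E}_{\lambda_0}^n$ --- hence a weakly special subvariety of $\mathcal{E}^{(n)}_K$ (a coset of an abelian subvariety in a single fiber is weakly special in a connected mixed Shimura variety of Kuga type). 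This settles the fiber case for all $K$ and all $m$, so from now on I assume $\pi(V)=Y(2)_K$.

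If all the $x_i$ are special (condition (1)), then each $x_i$ is a torsion point in a CM fiber; any nonzero $\phi_{i,j}$ above then forces $\mathcal{E}_\lambda$ to be isogenous to a CM curve, hence $\lambda$ to be special, and $N_iP_i=\sum_j\phi_{i,j}(x_j)$ to be torsion, hence $P_i$ to be torsion. So every $\Sigma$-special point of $\mathcal{E}^{(n)}_K$ is a special point of the connected mixed Shimura variety $\mathcal{E}^{(n)}_K$, and the Andr\'e-Oort conjecture for connected mixed Shimura varieties of Kuga type --- which holds by the work of Gao \cite{G17} together with the (by now unconditional) Andr\'e-Oort conjecture for pure Shimura varieties of abelian type --- shows that $V$ is special, in particular weakly special.

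Under condition (2) everything takes place over $\bar{\mathbb{Q}}$, and the same computation shows that any contribution of an index $j$ with $\mathcal{E}_{\pi(x_j)}$ of CM type is a torsion point; hence $\Xi$ is contained in the union of the set of special points of $\mathcal{E}^{(n)}_{\bar{\mathbb{Q}}}$ with the set of $\Sigma$-special points lying over $\lambda$ with $\mathcal{E}_\lambda$ isogenous to one of the finitely many non-CM curves $\mathcal{E}_{\pi(x_j)}$, i.e.\ (up to division) with the $\Hom$-orbit of the point $x=(x_1,\hdots,x_r)$. Since $V$ is irreducible, one of these two closed subsets already meets $V$ in a Zariski-dense set: in the first case one concludes by Andr\'e-Oort as above; in the second case one applies the results of Dill \cite{Dill20,Dill21} on unlikely intersections with isogeny orbits in products of elliptic schemes over $\bar{\mathbb{Q}}$, and if those only yield that $V$ is contained in a proper special subvariety $W$ of $\mathcal{E}^{(n)}_{\bar{\mathbb{Q}}}$, one pulls $V$ back along a distinguished morphism with finite fibers whose image is $W$ and repeats the argument, inducting on $\dim\mathcal{E}^{(n)}$ (special subvarieties of $\mathcal{E}^{(n)}$ being again images of products of Legendre-type and CM-type pieces).

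Finally, under condition (3) we have $m\le 1$; a zero-dimensional $V$ is a $\Sigma$-special point, hence weakly special by definition, and for a curve $V$ the fiber case is already done, so I may assume $V$ is a curve finite over $Y(2)_K$. The $\Sigma$-special points of $V$ are then the points at which the $n$ coordinate sections of $V\to Y(2)_K$ simultaneously fall into the division of the $\Hom$-orbit of $x$ in, or into the torsion of, the corresponding fiber; this is a relative Manin-Mumford/Mordell-Lang problem for a curve in a family of powers of elliptic curves, which over $\bar{\mathbb{Q}}$ is settled by combining Andr\'e-Oort for the CM part (via \cite{G17}) with the isogeny-orbit results of Dill \cite{Dill20,Dill21}, forcing $V$ to be weakly special, and for general $K$ is reduced to $\bar{\mathbb{Q}}$ by a standard specialization argument (isogenies and the relations $N_iP_i=\sum_j\phi_{i,j}(x_j)$ specialize, and for a curve the presence of infinitely many $\Sigma$-special points persists under a sufficiently general specialization). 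The main obstacle throughout is the dominant case: matching the precise form of Dill's isogeny-orbit theorems to the conclusion ``weakly special'' --- which is what forces the inductive d\'evissage through special subvarieties of $\mathcal{E}^{(n)}$ --- and, in case (3), controlling the $\Sigma$-special points along the specialization from $K$ to $\bar{\mathbb{Q}}$.
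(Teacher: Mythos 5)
Your proposal is correct and follows essentially the same route as the paper: it reads off the explicit description of $\Sigma$-special points, reduces condition (1) to Andr\'e--Oort via Gao's Theorem 13.6 in \cite{G17}, splits the dense set under condition (2) into special points (Andr\'e--Oort) and non-special $\Sigma$-special points handled by Theorem 1.1 of \cite{Dill20}, and handles condition (3) by Dill's curve result \cite{Dill21} over $\bar{\mathbb{Q}}$ together with a specialization/reduction step for general $K$ (which the paper likewise only sketches, deferring to the second author's thesis). The preliminary fiber/dominant dichotomy via Mordell--Lang for abelian varieties is a harmless addition not present in the paper's proof.
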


\begin{proof}
	We apply our above characterization of the $\Sigma$-special subvarieties of $\mathcal{E}^{(n)}_K$. We also use again Proposition 5.3 in \cite{G182}, which yields a characterization of the weakly special subvarieties of $\mathcal{E}^{(n)}_K$. Under condition (1), $\Sigma$ is just the set of special points of $\mathcal{E}^{(n)}_K$ and Mordell-Lang reduces to Andr\'e-Oort. In this case the claim follows from Theorem 13.6 in \cite{G17}. Under condition (2), we deduce the theorem by combining Theorem 13.6 in \cite{G17} with Theorem 1.1 in \cite{Dill20}.
	Indeed, given a subvariety $V$ of $ \mathcal{E}^{(n)}_K$, if it contains a dense set of special points, then it is special by Andr\'e-Oort. Otherwise, in $V$ there must be a dense set of non-special points $y$ that are $\Sigma$-special. This means that, for any such $y=(y_1 ,\dots , y_n)$, $\mathcal{E}_{\pi(y)}$ is isogenous to one of the $\mathcal{E}_{\pi(x_k)}$ without CM and relations of the form $N_iy_i = \sum_{j=1}^{r}{\phi_{i,j}(x_j)}$ hold, where the $N_i$ are positive integers and $\phi_{i,j}: \mathcal{E}_{\pi(x_j)} \to \mathcal{E}_{\pi(y_i)}$ ($j = 1,\hdots, r$) are homomorphisms of algebraic groups ($i = 1,\hdots,n$). Our claim then follows from Theorem 1.1 in \cite{Dill20}.

	 Under condition (3), if $K = \bar{\mathbb{Q}}$, the theorem follows from Theorem 13.6 in \cite{G17} and Theorem 1.3 in \cite{Dill21}. For general $K$, a sketch of the proof of an analogue of Theorem 1.3 in \cite{Dill21} can be found in Section 5.1 in \cite{DillThesis}.
\end{proof}

\begin{rmk}
See also Theorem 1.6 in \cite{G15} for a related result and see the master's thesis \cite{NB} of N\'estor Bl\'azquez for a detailed characterization of the weakly special subvarieties of $\mathcal{E}^{(n_1)}_{\mathbb{C}} \times \mathcal{E}^{(n_2)}_{\mathbb{C}}$ as well as a proof of Theorem \ref{thm:MLlegendre} under condition (2) without the assumption that $K = \bar{\mathbb{Q}}$.
\end{rmk}

\begin{cor}\label{cor:maincor}
Let $n \in \mathbb{N}$. Suppose that $K$ and $x$ satisfy condition (1) or (2) in Theorem \ref{thm:MLlegendre}. Then, for every subvariety $V\subset \mathcal{E}^{(n)}_K$, there are at most finitely many $\Sigma$-optimal varieties for $V$ in $\mathcal{E}^{(n)}_K$.
\end{cor}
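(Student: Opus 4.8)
The plan is to deduce the corollary from Theorem \ref{thm:AD1}, applied with $Z = \mathcal{E}^{(n)}_K$ inside the distinguished category of connected mixed Shimura varieties of Kuga type over $K$, which is one of the examples from \cite{BD2} satisfying \ref{ax:5}. I would invoke Theorem \ref{thm:AD1} with the class $\mathcal{S}$ consisting of all fibered powers $\mathcal{E}^{(k)}_K$ for $k \geq 0$ (so that $\mathcal{E}^{(0)}_K = Y(2)_K$ is allowed) together with all powers of elliptic curves over $K$ with complex multiplication. These are all connected mixed Shimura varieties of Kuga type and $\mathcal{E}^{(n)}_K \in \mathcal{S}$; what remains to be checked is that $\mathcal{S}$ is closed under passing to the auxiliary varieties $Y_\phi, Z_\psi$ produced by \ref{ax:5} and that $\WML(X', \Sigma, m)$ holds for every $X' \in \mathcal{S}$.

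For the closure property, I would go through the proof of \ref{ax:5} in \cite{BD2}: for a subvariety $V$ of $X' \in \mathcal{S}$ it produces the finite set of pairs $(\phi, \psi)$ by parametrizing the weakly special subvarieties of $X'$, so it suffices to show that every weakly special subvariety of $X'$ is an irreducible component of $\phi(\psi^{-1}(z))$ for Shimura morphisms $\phi : Y \to X'$, $\psi : Y \to Z'$ with $Y, Z' \in \mathcal{S}$ and $z \in Z'(K)$. For $X' = \mathcal{E}^{(k)}_K$ this can be read off from the description of weakly special subvarieties in Proposition 5.3 of \cite{G182} (together with Propositions 2.1 and 3.1 of \cite{Edixhoven_2005}): such a subvariety is either horizontal over $Y(2)_K$, hence the image of a Shimura morphism from some $\mathcal{E}^{(k')}_K$ (with $Z' = Y(2)_K$ when one also fixes the point in the base), or it lies in a single fibre $\mathcal{E}_\mu$, hence is a translate of an abelian subvariety and is parametrized by a power of $\mathcal{E}_\mu$ -- and whenever non-integral endomorphisms are needed there, $\mathcal{E}_\mu$ has complex multiplication, so the power lies in $\mathcal{S}$; the varieties $Z'$ that occur have the same shapes. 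For $X'$ a power of a CM elliptic curve, every weakly special subvariety is a translate of an abelian subvariety and is parametrized by a power of the same curve.

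For $\WML$, when $X' = \mathcal{E}^{(k)}_K$ the statement is exactly Theorem \ref{thm:MLlegendre}, whose hypotheses are precisely those assumed in the corollary (the value of $k$ playing no role). When $X'$ is a power of a CM elliptic curve $E'$, I would note that $E'$ is isomorphic over $K$ to a fibre $\mathcal{E}_\mu$ of the Legendre family over a CM point $\mu \in Y(2)(K)$, so $X'$ is isomorphic to the fibre over $\mu$ of $\mathcal{E}^{(k)}_K \to Y(2)_K$, which is a special subvariety of $\mathcal{E}^{(k)}_K$. Now $\WML$ passes to special subvarieties: if $X' \subset Z$ is special, then every $\Sigma$-special point of $X'$ is a $\Sigma$-special point of $Z$ (by Lemma \ref{lem:heckeorbitintersection2}, since a special subvariety of $X' \times X$ is special in $Z \times X$) and every weakly special subvariety of $Z$ contained in $X'$ is weakly special in $X'$, so a subvariety of $X'$ of dimension at most $m$ carrying a Zariski-dense set of $\Sigma$-special points is weakly special as soon as $\WML(Z, \Sigma, m)$ is known. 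Hence $\WML(X', \Sigma, m)$ follows from $\WML(\mathcal{E}^{(k)}_K, \Sigma, m)$, i.e., from Theorem \ref{thm:MLlegendre}; the remaining case $X' = Y(2)_K = \mathcal{E}^{(0)}_K$ is treated the same way, using the zero section $Y(2)_K \hookrightarrow \mathcal{E}_K$. With all hypotheses of Theorem \ref{thm:AD1} verified, the corollary follows. The step that I expect to require the most care is the closure property of the second paragraph: pinning down exactly which $Y_\phi$ and $Z_\psi$ the proof of \ref{ax:5} in \cite{BD2} yields for subvarieties of the members of $\mathcal{S}$, and confirming that all of them can be kept inside $\mathcal{S}$.
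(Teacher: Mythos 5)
Your proposal is correct and follows essentially the same route as the paper: apply Theorem \ref{thm:AD1} with the class $\mathcal{S}$ of fibered powers $\mathcal{E}^{(k)}_K$ and powers of CM elliptic curves (the paper cites the argument of Theorem 13.1 in \cite{BD2} for the closure property you verify by hand), and deduce $\WML$ for each member from Theorem \ref{thm:MLlegendre}. Your explicit transfer of $\WML$ to powers of CM elliptic curves via their realization as special fibres of $\mathcal{E}^{(k)}_K$ is a correct filling-in of a step the paper leaves implicit.
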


Corollary \ref{cor:maincor} also holds if $V$ is a curve so that condition (3) from Theorem \ref{thm:MLlegendre} is satisfied, but in that case, the conclusion of Corollary \ref{cor:maincor} is equivalent to $\WML(\mathcal{E}^{(n)}_K,\Sigma,1)$. 

\begin{proof}
We want to combine Theorems \ref{thm:AD1} and \ref{thm:MLlegendre}. We have to verify that $\WML(X',\Sigma,m)$ holds for all $m \geq 0$ and every distinguished variety $X'$ in a class $\mathcal{S}$ that verifies the hypothesis in Theorem \ref{thm:AD1} for $Z = \mathcal{E}^{(n)}_K$. By Lemmas \ref{lem:strongax5} and \ref{lem:smlinherited}, it suffices to verify that $\SML(\mathcal{E}^{(n)}_K,\Sigma,m)$ holds for all $m \geq 0$ and $n \in \mathbb{N}$. Since every $\Sigma$-special subvariety of $\mathcal{E}^{(n)}_K$ contains a Zariski dense set of $\Sigma$-special points by the classification above, this follows from Lemma \ref{lem:wmlequivalentsml} and Theorem \ref{thm:MLlegendre}.
\end{proof}

\section*{Acknowledgements}
We thank Lars K\"uhne for useful discussions. The second-named author thanks Salim Tayou for a discussion in Cetraro that led to the appendix of this article.

The first-named author was supported by the  PRIN 2022 project “2022HPSNCR: Semiabelian varieties, Galois representations and related Diophantine problems”
and the ``National Group for Algebraic and Geometric Structures, and their Applications" (GNSAGA INdAM).

This material is based upon work supported by the National Science Foundation under Grant No. DMS--1928930 while the second-named author was in residence at the Mathematical Sciences Research Institute in Berkeley, California, during the Spring 2023 semester.

The second-named author thanks the DFG for its support (grant no. EXC-2047/1 - 390685813).

\noindent
\framebox[\textwidth]{
	\begin{tabular*}{0.96\textwidth}{@{\extracolsep{\fill} }cp{0.84\textwidth}}
		\raisebox{-0.7\height}{%
			\begin{tikzpicture}[y=0.80pt, x=0.8pt, yscale=-1, inner sep=0pt, outer sep=0pt, 
				scale=0.12]
				\definecolor{c003399}{RGB}{0,51,153}
				\definecolor{cffcc00}{RGB}{255,204,0}
				\begin{scope}[shift={(0,-872.36218)}]
					\path[shift={(0,872.36218)},fill=c003399,nonzero rule] (0.0000,0.0000) rectangle (270.0000,180.0000);
					\foreach \myshift in 
					{(0,812.36218), (0,932.36218), 
						(60.0,872.36218), (-60.0,872.36218), 
						(30.0,820.36218), (-30.0,820.36218),
						(30.0,924.36218), (-30.0,924.36218),
						(-52.0,842.36218), (52.0,842.36218), 
						(52.0,902.36218), (-52.0,902.36218)}
					\path[shift=\myshift,fill=cffcc00,nonzero rule] (135.0000,80.0000) -- (137.2453,86.9096) -- (144.5106,86.9098) -- (138.6330,91.1804) -- (140.8778,98.0902) -- (135.0000,93.8200) -- (129.1222,98.0902) -- (131.3670,91.1804) -- (125.4894,86.9098) -- (132.7547,86.9096) -- cycle;
				\end{scope}
			\end{tikzpicture}
		}
		&
		Gabriel Dill has received funding from the European Research Council (ERC) under the European Union's Horizon 2020 research and innovation programme (grant agreement n$^\circ$ 945714).
	\end{tabular*}
}

\appendix

\section{Unlikely intersections with non-$\bar{\mathbb{Q}}$ subvarieties of $\mathcal{A}_g$}\label{appendix}

In this appendix, we first prove a general statement about certain distinguished categories and we then show how it yields Theorem \ref{thm:transcendentalzilberpinka_g}.

\begin{thm}\label{thm:transcendentalzilberpink}
Let $X$ be a distinguished variety in a distinguished category $\mathfrak{C}$ over $K$ such that the base change $\mathfrak{C}_{K'}$ satisfies \ref{ax:5} whenever $K'/K$ is an extension of algebraically closed fields of finite transcendence degree. Suppose that for all distinguished morphisms $\phi: Y \to X$, $\psi: Y \to Z$ such that $\phi$ has finite fibers and $\phi(Y) = X$, then either $\dim \psi(Y) = 0$ or $\psi$ has finite fibers.

Let $L/K$ be an extension of algebraically closed fields and let $V \subset X_L$ be a subvariety that is not the base change of a subvariety of $X$ (in particular, $K \neq L$). Suppose that $V$ is not contained in a proper special subvariety of $X_L$. Then the union of the intersections of $V$ with all special subvarieties of $X_L$ of codimension $> \dim V$ is not Zariski dense in $V$.
\end{thm}

\begin{proof}
We can assume without loss of generality that $L/K$ is an extension of finite transcendence degree. By induction on the transcendence degree, we can then assume furthermore that $L/K$ is an extension of transcendence degree $1$.

By Lemma 2.2 in \cite{BD}, there is then a subvariety $V_0 \subset X$ such that $V \subset (V_0)_L$ and $\dim V_0 = \dim V + 1$.

Suppose that $w \in V(L)$ belongs to a special subvariety of $X_L$ of codimension $> \dim V$. If $w $ is the base change of a point of $ X(K)$, then $w$ belongs to a proper closed subset of $V$, so we can assume without loss of generality that this is not the case. By Lemma 2.2 in \cite{BD}, there exists a subvariety $W_0 \subset V_0$ such that $w \in (W_0)_L(L)$ and $\dim W_0 = 1$. Furthermore, $\langle (W_0)_L \rangle = \langle \{w\} \rangle$. It follows that 
\begin{equation}\label{eq:defectineq}
    \delta(W_0) < \dim X - \dim V-1 = \delta(V_0).
\end{equation}
Let $U_0 \subset V_0$ be a subvariety such that $W_0 \subset U_0$, 
$\delta(U_0) \leq \delta(W_0)$, and $U_0$ is optimal for $V_0$ in $X$. Then $U_0 \neq V_0$ because of $\delta(U_0) \leq \delta(W_0)$ and inequality \eqref{eq:defectineq} and $U_0$ is also weakly optimal for $V_0$ in $X$ by Proposition 8.2 in \cite{BD2}.

Since the distinguished category we are working in satisfies \ref{ax:5} by hypothesis, we have that $\langle U_0 \rangle_{\mathrm{ws}}$ is an irreducible component of $\phi(\psi^{-1}(z))$ for one of finitely many pairs $(\phi,\psi)$ of distinguished morphisms $\phi: Y_\phi \to X$ and $\psi: Y_\phi \to Z_\psi$ such that $\phi$ has finite fibers and some $z \in Z_\psi(K)$. The set of such pairs only depends on $V_0$.

If $\phi(Y_\phi) \neq X$, then $w \in \phi(Y_\phi) \cap V$, which is a proper closed subset of $V$, and we are done. We can then assume that $\phi(Y_\phi) = X$. But in that case, our hypothesis implies that either $\dim \psi(Y) = 0$ or $\psi$ has finite fibers. In the first case, we must have $\psi(Y_\phi) = \{z\}$ and so $\langle U_0 \rangle_{\mathrm{ws}} = \phi(Y_\phi) = X$, which contradicts the fact that
\[ \delta(U_0) \leq \delta(W_0) < \dim X - \dim V -1 \leq \dim X - \dim U_0.\]
In the second case, it follows that $\psi^{-1}(z)$ is a finite set of points and so $\langle U_0 \rangle_{\mathrm{ws}}$ must be a point, which contradicts the fact that $\dim U_0 \geq \dim W_0 = 1$. We are done.
\end{proof}

We now show that Theorem \ref{thm:transcendentalzilberpink} can be applied to connected pure Shimura varieties with simple adjoint group.

\begin{thm}\label{thm:transcendentalzilberpinkshimura}
Let $S$ be a connected pure Shimura variety whose associated $\mathbb{Q}$-algebraic group has (non-trivial) simple adjoint group. Then, for all distinguished morphisms $\phi: Y \to S$, $\psi: Y \to Z$ such that $\phi$ has finite fibers and $\phi(Y) = S$, then either $\dim \psi(Y) = 0$ or $\psi$ has finite fibers.

In particular, if $K$ is an algebraically closed field of characteristic $0$ and $V \subset S_K$ is a subvariety that is not the base change of a subvariety of $S$ (in particular, $K \neq \bar{\mathbb{Q}}$) and is not contained in a proper special subvariety of $S_K$, we have that the union of the intersections of $V$ with all special subvarieties of $S_K$ of codimension $> \dim V$ is not Zariski dense in $V$.
\end{thm}

\begin{proof} Because of Theorem 9.6 in \cite{BD2}, the second claim follows directly from the first one and from Theorem \ref{thm:transcendentalzilberpink}.

Let $(G,X^+)$ be a connected pure Shimura datum such that $S = \mathcal{F}_{\mathrm{pSv}}(G,X^+,\Gamma)$ for some congruence subgroup $\Gamma \subset G(\mathbb{Q})$.

So suppose that $\phi: (H_1,Y_1^+,\Delta_1) \to (G,X^+,\Gamma)$ and $\psi: (H_1,Y_1^+,\Delta_1) \to (H_2,Y_2^+,\Delta_2)$ are morphisms in $\mathfrak{M}_{\mathrm{pSv}}$ such that $\mathcal{F}_{\mathrm{pSv}}(\phi)$ is surjective and has finite fibers. We have to show that $\mathcal{F}_{\mathrm{pSv}}(\psi)$ has either finite fibers or zero-dimensional image. In the following, by abuse of notation, we will also use $\phi$ and $\psi$ to denote the corresponding Shimura morphisms $(H_1,Y_1^+) \to (G,X^+)$ and $(H_1,Y_1^+) \to (H_2,Y_2^+)$ respectively.

Let $(G^{\mathrm{ad}},X^{+,\mathrm{ad}})$ denote the adjoint connected pure Shimura datum of $(G,X^+)$.

We want to show that the induced homomorphism $H_1 \to G^{\mathrm{ad}}$ is surjective. Assume, by way of contradiction, that this is not the case. We proceed with a similar argument as in the proof of Lemma 4.5 in \cite{BD2}. Since $\mathcal{F}_{\mathrm{pSv}}(\phi)$ is surjective and $(G,X^+) \to (G^{\mathrm{ad}},X^{+,\mathrm{ad}})$ is a Shimura quotient morphism, we can cover $X^{+,\mathrm{ad}}$ with countably many $G^{\mathrm{ad}}(\mathbb{Q})$-translates of the image of $Y_1^{+}$ in $X^{+,\mathrm{ad}}$. It follows that there is some algebraic subgroup $H \subsetneq G^{\mathrm{ad}}$ such that each $x \in X^{+,\mathrm{ad}}$ factors through $\gamma H_{\mathbb{C}} \gamma^{-1}$ for some $\gamma \in G^{\mathrm{ad}}(\mathbb{Q})$. Fix some $x_0 \in X^{+,\mathrm{ad}}$, then, for each $\gamma \in G^{\mathrm{ad}}(\mathbb{Q})$, the set of $g \in G^{\mathrm{ad}}(\mathbb{R})^+$ such that $gx_0$ factors through $\gamma H_{\mathbb{C}} \gamma^{-1}$ is a closed real analytic subset of the real analytic manifold $G^{\mathrm{ad}}(\mathbb{R})^+$ (see the proof of Lemma 4.5 in \cite{BD2}). Since the union of all these subsets for varying $\gamma$ is equal to $G^{\mathrm{ad}}(\mathbb{R})^+$, it follows from the Baire Category Theorem that one of them has non-empty interior. But then, by the identity theorem for real analytic functions, this subset has to be equal to all of $G^{\mathrm{ad}}(\mathbb{R})^+$.

Thus, after replacing $H$ by $\gamma H \gamma^{-1}$ for some $\gamma \in G^{\mathrm{ad}}(\mathbb{Q})$, we can assume that there is some algebraic subgroup $H \subsetneq G^{\mathrm{ad}}$ such that every $x \in X^{+,\mathrm{ad}}$ factors through $H_{\mathbb{C}}$. At the same time, $X^{+,\mathrm{ad}}$ is invariant under the action of $G^{\mathrm{ad}}(\mathbb{R})^+$ by conjugation and therefore every $x \in X^{+,\mathrm{ad}}$ factors through
\[ \bigcap_{g \in G^{\mathrm{ad}}(\mathbb{R})^+}{gH_{\mathbb{C}}g^{-1}}.\]
But $G^{\mathrm{ad}}(\mathbb{R})^+$ is Zariski dense in $G^{\mathrm{ad}}_{\mathbb{R}}$ because of Corollary 5.3 in \cite{MilneISV} and Theorem 17.93 in \cite{MilneAG} and so
\[ \bigcap_{g \in G^{\mathrm{ad}}(\mathbb{R})^+}{gH_{\mathbb{C}}g^{-1}} = \bigcap_{g \in G^{\mathrm{ad}}(\mathbb{C})}{gH_{\mathbb{C}}g^{-1}}.\]
The latter is a closed subset of ${G}^{\mathrm{ad}}_{\mathbb{C}}$ that is $\mathrm{Aut}(\mathbb{C}/\mathbb{Q})$-invariant and therefore equals the base change of a normal algebraic subgroup of $G^{\mathrm{ad}}$. So we can assume without loss of generality that $H$ is normal in  $G^{\mathrm{ad}}$ and, after replacing $H$ by its neutral component, that $H$ is connected. Since $G^{\mathrm{ad}}$ is simple by hypothesis, this implies that $H$ is trivial. But every $x \in X^{+,\mathrm{ad}}$ factors through $H_{\mathbb{C}}$ and so $X^{+,\mathrm{ad}}$ is a singleton, consisting of the zero homomorphism. However, because of (MSD.d), this means that $G^{\mathrm{ad}}(\mathbb{R})$ is compact, a contradiction with (MSD.d) and the hypothesis that $G^{\mathrm{ad}}$ is non-trivial. We deduce that the induced homomorphism $H_1 \to G^{\mathrm{ad}}$ must be surjective and therefore factors through $H_1^{\mathrm{ad}}$.

Let $(H_1^{\mathrm{ad}},Y_1^{+,\mathrm{ad}})$ denote the adjoint connected pure Shimura datum of $(H_1,Y_1^+)$. By Theorem 21.51 in \cite{MilneAG}, the semisimple group $H_1^{\mathrm{ad}}$ is isomorphic to an almost-direct product of almost-simple algebraic groups. Since $H_1^{\mathrm{ad}}$ is adjoint and an almost-simple algebraic group with trivial center is simple (the action by conjugation on any finite normal subgroup has infinite kernel and is therefore necessarily trivial), this is actually a direct product of simple factors. It also follows from Theorem 21.51 in \cite{MilneAG} and the fact that $G^{\mathrm{ad}}$ is simple that the homomorphism $H_1^{\mathrm{ad}} \to G^{\mathrm{ad}}$ must be the projection to one of the simple factors of $H_1^{\mathrm{ad}}$ followed by an isomorphism between that factor and $G^{\mathrm{ad}}$. Finally, since $\mathcal{F}_{\mathrm{pSv}}(\phi)$ has finite fibers, there can be no other simple factors in the product decomposition of $H_1^{\mathrm{ad}}$: if $(H,Y^+)$ were such a factor (we obtain $Y^+$ by taking the quotient of $(H_1^{\mathrm{ad}},Y_1^{+,\mathrm{ad}})$ by the product of all the other factors), then, by (MSD.d), the kernel of the action of $H(\mathbb{R})^+$ on $Y^+$ would be compact, but, again by (MSD.d), $H(\mathbb{R})$ is not compact, so $Y^+$ would be positive-dimensional and $\mathcal{F}_{\mathrm{pSv}}(\phi)$ could not have finite fibers. Thus, the homomorphism $H_1^{\mathrm{ad}} \to G^{\mathrm{ad}}$ must already be an isomorphism.

The homomorphism $H_1 \to H_2$ factors as the composition of a surjective homomorphism $H_1 \to H_3$ and an injective homomorphism $H_3 \to H_2$. The surjective homomorphism $H_1 \to H_3$ induces a surjective homomorphism $H_1^{\mathrm{ad}} \to H_3^{\mathrm{ad}}$. Since $H_1^{\mathrm{ad}} \simeq G^{\mathrm{ad}}$ is simple, the kernel of $H_1^{\mathrm{ad}} \to H_3^{\mathrm{ad}}$ is either trivial or equal to $H_1^{\mathrm{ad}}$. In the first case, the kernel of $H_1 \to H_3$ is contained in the center of $H_1$ and so $\psi$ is a Shimura immersion and $\mathcal{F}_{\mathrm{pSv}}(\psi)$ has finite fibers by Proposition 3.6(2) in \cite{BD2} while in the second case, the image of $H_1 \to H_3$ is contained in the center of $H_3$ and so $\mathcal{F}_{\mathrm{pSv}}(\psi)$ has zero-dimensional image.
\end{proof}

\begin{rmk}\label{rmk:transcendentalzilberpinka_g}
In particular, Theorem \ref{thm:transcendentalzilberpinkshimura} applies to the coarse moduli space of principally polarized abelian varieties of dimension $g \in \mathbb{N}$ since the adjoint group of $\mathrm{GSp}_{2g,\mathbb{Q}}$ is simple (see Summary 21.98 in \cite{MilneAG}).
\end{rmk}

\bibliographystyle{amsalpha}

\bibliography{Bibliography}

\end{document}